\numberwithin{equation}{section}
\theoremstyle{plain}
\newtheorem{theorem}{Theorem}[section]
\newtheorem{lemma}[theorem]{Lemma}
\newtheorem{predl}[theorem]{Proposition}
\newtheorem{corollary}[theorem]{Corollary}
\newtheorem{conjecture}[theorem]{Conjecture}
\newtheorem{theorem_intro}{Theorem}
\theoremstyle{definition}
\newtheorem{definition}[theorem]{Definition}
\newtheorem{remark}[theorem]{Remark}
\newtheorem{example}[theorem]{Example}
\newcommand{\R}{\mathbb R}
\newcommand{\Z}{\mathbb Z}
\renewcommand{\P}{\mathbb P}
\newcommand{\D}{\mathcal D}
\newcommand{\EE}{\mathcal E}
\newcommand{\FF}{\mathcal F}
\newcommand{\LL}{\mathcal L}
\newcommand{\TT}{\mathcal T}
\renewcommand{\O}{\mathcal O}
\renewcommand{\k}{\mathsf k}
\newcommand{\xra}{\xrightarrow}
\renewcommand{\le}{\leqslant}
\renewcommand{\ge}{\geqslant}
\newcommand{\bul}{\bullet}
\DeclareMathOperator{\Hom}{\textup{Hom}}
\DeclareMathOperator{\Pic}{\mathrm{Pic}}
\DeclareMathOperator{\End}{\mathrm{End}}
\DeclareMathOperator{\im}{\mathrm{im}}
\DeclareMathOperator{\rank}{\mathrm{rank}}
\DeclareMathOperator{\coh}{\mathrm{coh}}
\DeclareMathOperator{\perm}{\mathrm{tran}}
\DeclareMathOperator{\augm}{\mathrm{augm}}
\DeclareMathOperator{\sh}{\mathrm{sh}}
\begin{document}

\title{On cyclic strong exceptional collections of line bundles on surfaces}

\author{Alexey ELAGIN}
\address{Steklov Mathematical Institute of
Russian Academy of Sciences, Moscow, RUSSIA}
\email{alexelagin@rambler.ru}

\author{Junyan XU}
\address{Department of Mathematics, Indiana University, 831 E. Third St., Bloomington, IN 47405, USA}
\email{xu56@umail.iu.edu}

\author{Shizhuo ZHANG}
\address{Department of Mathematics, Indiana University, 831 E. Third St., Bloomington, IN 47405, USA}
\email{zhang398@umail.iu.edu}

\keywords{Weak del Pezzo surface, exceptional collection, line bundle}
\subjclass[2010]{14F05, 14J26}

\begin{abstract}We study exceptional collections of line bundles on surfaces. 
We prove that any full cyclic strong exceptional collection  of line bundles on a rational surface is an augmentation in the sense of L.\,Hille and M.\,Perling. 
We find simple geometric criteria of exceptionality (strong exceptionality, cyclic strong exceptionality) for collections of line bundles on  weak del Pezzo surfaces.
As a result, we classify smooth projective surfaces admitting a full cyclic strong exceptional collection  of line bundles.  Also, we provide an example of a weak del Pezzo surface of degree $2$ and a full strong exceptional collection of line bundles on it which does not come from augmentations, thus answering a question by Hille and Perling. 
\end{abstract}

\maketitle


\tableofcontents

\section{Introduction}

Our paper is devoted to the study of exceptional collections of line bundles on surfaces. 
By a surface we always mean a smooth connected projective surface over an algebraically closed field of zero characteristic.
Among the questions that are addressed in this paper are the following

\medskip
\textbf{Question 1.} Which surfaces admit exceptional/strong exceptional/cyclic strong exceptional collections of line bundles?

\textbf{Question 2.}  How to construct exceptional/strong exceptional/cyclic strong exceptional collections of line bundles if they exist?

\textbf{Question 3.} How to tell whether a given collection of line bundles on a surface is exceptional/strong exceptional/cyclic strong exceptional?

\medskip
It is believed that any variety with a full exceptional collection in the bounded derived category of coherent sheaves is rational. Though, there is no proof yet even in the case when the collection is formed by line bundles. On the other hand, on any rational surface one can construct a full exceptional collection of line bundles, using a construction by Dmitry Orlov \cite{Or}. 

For strong exceptional collections of line bundles the question is much more complicated.
First, it is known that any surface with a full strong exceptional collection of line bundles is rational, see \cite{BS}.
It was conjectured by Alastair King \cite{Ki} that any smooth toric variety has a strong exceptional collection formed by line bundles. In \cite{HP2} Lutz Hille and Markus Perling constructed a surface which is a counterexample.
Later in \cite[Theorem 8.2]{HP} they established a criterion determining whether a toric surface  admits a full strong exceptional collection of line bundles or not. It was demonstrated  that such a collection exists if and only if the toric surface can be obtained from some Hirzebruch surface by two blow-up operations: on each step one can blow up several distinct points. Also it was 
explained that on any (not necessarily toric) rational surface obtained from a Hirzebruch surface by two blow-up operations there exists a full strong exceptional collection of line bundles, see Theorem 5.9 in \cite{HP}. Therefore one arrives at a reasonable

\begin{conjecture}[\cite{HP}]
\label{conj_twosteps}
A rational surface $X$ has a full and strong exceptional collection of line bundles in the derived category if and only if $X$ can be obtained from some Hirzebruch surface by at most two steps of blowing up points (maybe several at each step).
\end{conjecture}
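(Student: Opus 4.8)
The plan is to take the ``if'' direction as given: Theorem~5.9 of \cite{HP} already produces a full strong exceptional collection of line bundles on every surface obtained from a Hirzebruch surface by at most two rounds of blow-ups, so I would concentrate on the converse. Suppose then that $X$ is a rational surface carrying a full strong exceptional collection of line bundles $\EE=(L_1,\dots,L_n)$; after a twist we may assume $L_1=\O_X$. The first step is to replace $\EE$ by its combinatorial shadow, the associated toric system $(A_1,\dots,A_n)$ in the sense of \cite{HP}, which satisfies $A_i\cdot A_{i+1}=1$, $A_i\cdot A_j=0$ for $|i-j|\ge 2$ (cyclically), and $\sum_i A_i=-K_X$. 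Exceptionality of $\EE$ says $H^{\bul}(X,L_j-L_i)=0$ for $i>j$, and strong exceptionality adds $H^1(X,L_j-L_i)=H^2(X,L_j-L_i)=0$ for $i<j$; since each such difference is a consecutive partial sum $A_i+\dots+A_{j-1}$, the target becomes purely combinatorial: a toric system subject to these vanishing conditions should force $X$ to be a two-step blow-up of a Hirzebruch surface.

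The engine I would use is the augmentation calculus of \cite{HP}. On a Hirzebruch surface there is a short explicit list of toric systems, and blowing up a point turns a toric system on the base into an ``augmented'' one on the blow-up, compatibly with the standard semiorthogonal decomposition of a blow-up. Writing $X$ as an iterated blow-up $X=X_k\to\cdots\to X_0$ with $X_0$ Hirzebruch, I would argue by induction on $\rho(X)$: locate inside the cyclic sequence $(A_i)$ the subwords that record the exceptional curves of the final round of blow-ups, show --- using that every partial sum is effective with vanishing higher cohomology --- that these subwords have exactly the shape an augmentation produces, strip them off to descend the toric system to $X_{k-1}$, and repeat. The crux is to prove that once two such rounds have been stripped, no third is possible: a third genuinely nested round forces a curve of self-intersection $\le -3$ (a long enough chain of infinitely near points), from which one must derive a contradiction with the acyclicity of the corresponding partial sums. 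For $X$ a weak del Pezzo surface one can bypass the induction altogether and invoke instead the explicit geometric criteria for (strong, cyclic strong) exceptionality of line-bundle collections proved earlier in the paper, reducing the conjecture to a finite check over the finitely many deformation types, organized by the configuration of $(-1)$- and $(-2)$-curves.

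I expect the step ``strong toric system $\Rightarrow$ augmentation'' to be the real obstacle --- indeed it is why the statement is still a conjecture. That implication is \emph{false} as stated: the weak del Pezzo surface of degree $2$ constructed later in this paper carries a full strong exceptional collection of line bundles that is not an augmentation. Consequently the inductive descent cannot be carried out on the level of collections, and a proof must instead bound the blow-up complexity of the \emph{surface} using only the \emph{existence} of some strong collection --- ruling out three nested rounds of blow-ups by a cohomological obstruction that does not presuppose the collection is an augmentation. I would look for such an obstruction in a direct analysis of the nef and effective cones of $X$ together with the numerical constraints $\sum_i A_i=-K_X$ and $H^{>0}(X,L_j-L_i)=0$; this is precisely what the paper's geometric criteria supply on weak del Pezzo surfaces, and where the cyclic-strong hypothesis is available the descent does go through, but for an arbitrary rational surface the general case appears to remain out of reach.
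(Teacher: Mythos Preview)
The statement you were asked to prove is labeled a \emph{Conjecture} in the paper, and the paper does not contain a proof of it; it remains open. So there is no ``paper's own proof'' to compare your attempt against. What the paper does establish around this conjecture is: (i) the ``if'' direction is already Theorem~5.9 of \cite{HP}; (ii) the stronger Conjecture~\ref{conj_augm} (every full strong exceptional collection of line bundles is, up to permutations, a standard augmentation) would imply Conjecture~\ref{conj_twosteps}; (iii) Conjecture~\ref{conj_augm} is \emph{false} in general --- Section~\ref{section_ce} exhibits a weak del Pezzo surface of degree $2$ with a full strong exceptional collection of line bundles that is not an augmentation in any sense; and (iv) nonetheless, every such counterexample surface found is still a two-step blow-up of a Hirzebruch surface, so Conjecture~\ref{conj_twosteps} survives as evidence rather than as a theorem.

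Your write-up is not a proof but an honest assessment of the state of play, and as such it is accurate: you correctly identify the augmentation descent as the natural strategy, correctly note that the key step ``strong exceptional toric system $\Rightarrow$ augmentation'' fails because of the paper's own counterexample, and correctly conclude that a proof would have to bound the blow-up depth of the \emph{surface} from the mere \emph{existence} of a strong collection, without assuming that collection descends. That is exactly where the paper leaves things. One small clarification: you suggest that on weak del Pezzo surfaces the geometric criteria in the paper reduce the conjecture to a finite check; in fact the paper carries out this program completely only for cyclic strong collections (Theorem~\ref{theorem_1kindweak} and Corollary~\ref{cor_cyclicaugm}), and for merely strong collections only in degree $\ge 3$ (Theorem~\ref{theorem_intro3}, with proof deferred to \cite{EXZ}). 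The degree $2$ case is precisely where the augmentation approach breaks and the conjecture, for strong collections, remains genuinely open.
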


\medskip
Of special interest are cyclic strong exceptional collections of line bundles. Recall that a strong  exceptional collection $(\O_X(D_1),\ldots,\O_X(D_n))$ is \emph{cyclic strong} if it remains strong exceptional after ``cyclic shifts''. That is, any segment 
$(\O_X(D_{k+1}),\ldots,\O_X(D_{k+n}))$ in the infinite helix $\ldots,\O_X(D_i),\ldots$, defined by the rule $D_{i+n}=D_i-K_X$, is also strong exceptional. 
Cyclic strong exceptional collections of line bundles and the tilting bundles formed by them have attracted much attention by specialists from different areas of mathematics  and have been  studied under different names (pull-back exceptional collections in \cite{Br}, \cite{BF}, \cite{VdB}, first order approximation to pull-back exceptional collections in  \cite{BF}, $2$-hereditary exceptional collections in \cite{Ch}). For example, 
let $X$ be a smooth del Pezzo surface and $Y$ be the
total space of the canonical line bundle on $X$. 
Then a full cyclic strong
exceptional collection of line bundles on $X$  will pull back to a tilting
bundle $T$ on $Y$. The algebra $\End(T)$ gives a noncommutative crepant resolution of the anti-canonical cone of $X$ in the sense of Michel Van den Berg, see \cite{VdB}.

In Hille and Perling's paper \cite{HP} the following results concerning full cyclic strong exceptional collections of line bundles were obtained:
\begin{itemize}
\item if a rational surface $X$ has a full cyclic strong exceptional collection of line bundles then \\$\deg(X):=K_X^2\ge 3$;
\item any del Pezzo surface $X$ with $\deg(X)\ge 3$ has a full cyclic strong exceptional collection of line bundles;
\item a toric surface $X$ has a full cyclic strong exceptional collection of line bundles if and only if $-K_X$ is numerically effective.
\end{itemize}

In this paper we generalize the above results and give a complete classification of surfaces admitting a full cyclic strong exceptional collection of line bundles, answering Question 1 for such collections. First, any such surface is a weak del Pezzo surface.
Recall that a weak del Pezzo surface is a smooth projective surface $X$ such that $K_X^2>0$ and $-K_X$ is numerically effective. Alternatively, $X$ is the minimal resolution of singularities on a singular del Pezzo surface having only rational double points as singularities. Weak del Pezzo surfaces are rational, they are distinguished by their \emph{type} (see Section~\ref{section_wdp}).
Second, for any type of weak del Pezzo surfaces we determine whether they possess a full cyclic strong exceptional collection of line bundles.

\begin{theorem_intro}[Propositions~\ref{prop_cyclicwdp} and \ref{prop_classification}]
\label{theorem_intro1}
Let $X$ be a smooth projective surface having a cyclic strong exceptional collection of line bundles of maximal length. Then $X$ is a weak del Pezzo surface and $X$ is one of the surfaces from Table~\ref{table_yes}. Moreover, any weak del Pezzo surface from  Table~\ref{table_yes} possesses a full cyclic strong exceptional collection of line bundles.
\end{theorem_intro}

Basing on Theorem~\ref{theorem_intro1}, in a recent paper \cite{Zh} 
some new examples of non-commutative crepant resolutions in the sense of \cite{VdB} were constructed.

For the study of exceptional collections of line bundles, we use  two notions, proposed by Hille and Perling.
First, it is convenient to pass from an exceptional collection 
$$(\O_X(D_1),\ldots,\O_X(D_n))$$ 
of line bundles (where $D_i$ are divisors on $X$) to the sequence
$A_1,\ldots,A_{n-1}$ of differences: $A_k=D_{k+1}-D_k$ for $k=1,\ldots,n-1$. It is useful also 
to complete the sequence $D_1,\ldots,D_n$ to an infinite helix $D_i, i\in \Z$, by the rule $D_{k+n}=D_k-K_X$ for all $k$, and
to add the term
$A_n=D_{n+1}-D_n=D_1-K_X-D_n$ to the sequence $A_1,\ldots,A_{n-1}$. Thus one gets a sequence
$$A=(A_1,\ldots,A_n), \quad A_i\in\Pic (X),$$
which contains essentially the same information as the original collection 
$(\O_X(D_1),\ldots,\O_X(D_n))$. 
Elements $A_1,\ldots,A_n$ generate $\Pic(X)$ and have nice combinatorial properties (where indices are treated modulo $n$):
\begin{itemize}
\item $A_i\cdot A_{i+1}=1$;
\item $A_i\cdot A_j=0$ if $j\ne i,i\pm 1$;
\item $\sum_i A_i=-K_X$.
\end{itemize}
These properties comprise the definition of a \emph{toric system} $A_1,\ldots,A_n$, see Definition~\ref{def_ts}.

To answer Question 2, we use the second discovery of \cite{HP}, the notion of augmentation. This operation enables, starting from a toric system $A=(A_1,\ldots,A_n)$ on some surface $X$, to obtain a toric system on the blow-up $X'$ of $X$ at a point. Namely, let $1\le m\le n+1$ be an index and let $E$ denote the exceptional divisor of the blow-up. Then the augmented toric system  is the sequence
$$\augm_m(A)=(A'_1,\ldots,A'_{m-2},A'_{m-1}-E,E,A'_m-E,A'_{m+1},\ldots,A'_n)$$
in $\Pic (X')$,
where $A'_i$ is the pull-back of $A_i$. One can start from some toric system on a Hirzebruch surface and perform several blow-ups, augmenting the toric system at each step. The resulting toric system is called a \emph{standard augmentation}. This construction gives us some (quite many) explicit examples of toric systems on rational surfaces. 

It is natural to ask a question: does any full exceptional collection of line bundles correspond to a toric system which is a standard augmentation? The above cannot be literally true, but is close to being true if we do not distinguish between exceptional collections which differ only by the ordering of line bundles inside blocks of completely orthogonal bundles. Reordering of two mutually orthogonal line bundles in the exceptional collection corresponds to an operation with toric systems which we call \emph{transposition}. By a \emph{permutation} of a toric system we mean a composition of several transpositions, see Section~\ref{section_prelim3} for details.

A positive answer to the above question means that we are able to find all full exceptional collections of line bundles: any collection on a rational surface $X$ is a standard augmentation for some minimal model $X\to \mathbb F_d$ and some exceptional collection on $\mathbb F_d$. For any fixed minimal model and exceptional collection on $\mathbb F_d$ there exists only a finite number of such standard augmentations.

Besides that, a positive answer to the above question helps to solve Conjecture~\ref{conj_twosteps}.
For standard augmentations Hille and Perling proved the following: a toric system on a rational surface $X$ which is a standard augmentation along some sequence of blow-ups $X\to\ldots\to \mathbb F_d$ corresponds to a strong exceptional collection only if $X$ was obtained from the Hirzebruch surface $\mathbb F_d$ in at most two blow-ups (each time one can blow up several different points).   

Motivated by that result,
Hille and Perling made the following 
\begin{conjecture}
\label{conj_augm}
Any full strong exceptional collection of line bundles on a rational surface corresponds  (up to permutation of completely orthogonal bundles) to a standard augmentation. 
\end{conjecture}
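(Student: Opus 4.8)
The plan is to attempt a proof by induction on the Picard rank $\rho(X)$, exploiting the fact that a standard augmentation is precisely a toric system that can be iteratively \emph{un-augmented} down to a Hirzebruch surface. Given a full strong exceptional collection $(\O_X(D_1),\ldots,\O_X(D_n))$ on a rational surface $X$, I would first pass to its toric system $A=(A_1,\ldots,A_n)$, $A_k=D_{k+1}-D_k$. The base case is $X=\mathbb F_d$ (or $\mathbb P^2$), where $\rho(X)=2$ and every toric system is a standard augmentation by definition, so there is nothing to prove. For the inductive step, I would try to show that after a suitable permutation of completely orthogonal bundles the sequence $A$ contains a single element $A_j$ equal to the class of a contractible $(-1)$-curve $E$; contracting $E$ along $X\ra X'$ should then exhibit $A$ as $\augm_m(A')$ for a toric system $A'$ on $X'$ with $\rho(X')=\rho(X)-1$, and induction would finish the argument.

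The reason the inductive step is plausible is that the bookkeeping of an un-augmentation is automatic once a $(-1)$-curve is located inside the toric system. Indeed, if $A_j=E$ with $E^2=-1$ and $E\cdot K_X=-1$, then the toric-system axioms $A_{j\pm 1}\cdot A_j=1$ and $A_i\cdot A_j=0$ for $i\ne j,j\pm 1$ force $A_i\cdot E=0$ for every non-adjacent $i$, and $(A_{j\pm 1}+E)\cdot E=0$. Hence every element of $A$ except $A_j$ becomes orthogonal to $E$ after absorbing one copy of $E$ into each of the two neighbours, which is exactly the pattern $(\ldots,A'_{m-1}-E,E,A'_m-E,\ldots)$ produced by $\augm_m$. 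Pushing the orthogonal classes forward along $X\ra X'$ would then yield a toric system $A'$ with $\augm_m(A')=A$. So the whole inductive step collapses to a single existence statement: after permutation, some element of the toric system is the class of a genuine contractible $(-1)$-curve.

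The key lemma I would therefore aim for is that strong exceptionality \emph{forces} such a $(-1)$-curve to appear as a toric-system element. Here I would exploit the $\Ext$-vanishing built into strong exceptionality: for $i<j$ one has $\Hom(\O_X(D_i),\O_X(D_j))=H^0(X,\O_X(D_j-D_i))$ concentrated in degree zero, which translates into positivity constraints on the partial sums $A_i+\ldots+A_{j-1}$, while the reverse vanishing for $i>j$ constrains the complementary classes $K_X-(A_j+\ldots+A_{i-1})$. Combining these constraints with Riemann--Roch and the structure of the effective cone, I would try to prove that the extremal elements of the toric system --- those with the most negative self-intersection --- are represented by irreducible curves, and that at least one such $(-1)$-class can be moved into augmentation position by a permutation of orthogonal bundles.

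The hard part, and the step I expect to be the genuine obstacle, is exactly this last existence claim. A toric-system element $A_j$ with the numerical invariants of a $(-1)$-curve need not be the class of an irreducible smooth rational curve: Riemann--Roch only gives $\chi(\O_X(A_j))=1$, not irreducibility. On surfaces carrying many $(-2)$-curves --- that is, weak del Pezzo surfaces of low degree, where $-K_X$ is merely nef --- the effective cone is subtle enough that no permutation of a given strong exceptional collection need expose a contractible $(-1)$-curve at all. I expect the argument to go through smoothly for honest del Pezzo surfaces, where $(-1)$-curves are abundant and $-K_X$ is ample, but to break down in degree $\le 2$. In fact I anticipate that this is precisely where a counterexample lives, in line with the degree-$2$ example flagged in the abstract: a full strong exceptional collection whose toric system, under every permutation, fails to present a contractible $(-1)$-curve, and which is therefore \emph{not} a standard augmentation. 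Thus I would expect this analysis to refute Conjecture~\ref{conj_augm} in general while confirming it in the del Pezzo (sufficiently high-degree) range.
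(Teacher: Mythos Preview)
Your strategic reading is correct: the statement is a conjecture, the paper refutes it in degree~$2$, and the mechanism of refutation is exactly the one you isolate --- a strong exceptional toric system for which no permutation exposes the class of an irreducible $(-1)$-curve. Your inductive framework and the reduction to ``find a contractible $E$ among the $A_j$ after permutation'' matches the paper's approach. The paper formalises this step by introducing the set $I(X,A)$ of all $(-1)$-classes that can appear as an element of some permutation of $A$, and proves (Lemma~\ref{lemma_IFdef}) that $I(X,A)$ consists precisely of the cyclic partial sums $A_{k\ldots l}$ whose self-intersections are a single $-1$ flanked by $-2$'s. The counterexample in Section~\ref{section_ce} is then produced by writing down an explicit strong exceptional toric system on the degree-$2$ surface of type $A_1+2A_3$, listing all elements of $I(X,A)$, and checking that none of them lies among the four irreducible $(-1)$-curves on $X$. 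Your proposal correctly anticipates the shape of this argument but does not supply the example.

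Two places where your outline is looser than the paper. First, the positive range is wider than you predict: the conjecture was already known for honest del Pezzo surfaces \cite{EL}, and the present paper extends it to all \emph{weak} del Pezzo surfaces of degree $\ge 3$ (Theorem~\ref{theorem_intro3}); the failure really is confined to degree $\le 2$. Second, your inductive step glosses over a genuine subtlety: having written $A=\augm_m(A')$, you need $A'$ to remain \emph{strong} exceptional to continue the induction (handled by Proposition~\ref{prop_augmexc}), and the permutations you perform to expose $E$ must themselves preserve strong exceptionality. The latter is not automatic --- $\perm_k$ preserves strong exceptionality only for $k\ne n$ (Lemma~\ref{lemma_perm}), and working around this is exactly what makes the proof of Theorem~\ref{theorem_1kind}(2) delicate. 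Your sketch does not engage with either issue, so as written it is a correct diagnosis of where and why the conjecture fails, but not yet a proof of any of the positive cases.
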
 
This conjecture can be viewed as a ``homological version'' of the minimal model program for rational surfaces.

Note that Conjecture~\ref{conj_augm} implies Conjecture~\ref{conj_twosteps}.

Hille and Perling proved Conjecture~\ref{conj_augm} for toric surfaces, thus proving for toric surfaces Conjecture~\ref{conj_twosteps}. Andreas Hochenegger and Nathan Ilten in \cite[Main Theorem 3]{HI} proved Conjecture~\ref{conj_augm} for toric surfaces of Picard rank $\le 4$ without strongness assumption. The first author and Valery Lunts in \cite[Theorem 1.4]{EL} proved  Conjecture~\ref{conj_augm} for del Pezzo surfaces and arbitrary numerically exceptional collections of line bundles of maximal length. 

In this paper we prove Conjecture~\ref{conj_augm} for cyclic strong exceptional collections. 

\begin{theorem_intro}[See Theorem~\ref{theorem_1kindweak} and Corollary~\ref{cor_cyclicaugm} for the precise statements]
\label{theorem_intro2}
Let $X$ be a smooth rational projective surface and $A=(A_1,\ldots,A_n)$ be a toric system on $X$. Suppose that $A$ is numerically cyclic strong: that is, the Euler characteristic $\chi(X,A_i)\ge 0$ for all $i$. Then up to some permutations $A$ is a standard augmentation.
In particular, any full cyclic strong exceptional collection of line bundles on $X$ corresponds (up to some permutations) to a toric system which is a standard augmentation.
\end{theorem_intro}

Therefore, one is able to list all full cyclic strong exceptional collections on a given surface.  This gives an answer to Question 2 for full cyclic strong exceptional collections.

\medskip
Our initial idea was to  investigate Conjecture~\ref{conj_augm} for weak del Pezzo surfaces. 
We have proved Conjecture~\ref{conj_augm} for all weak del Pezzo surfaces of degree $\ge 3$. 
\begin{theorem_intro}
\label{theorem_intro3}
Let $X$ be a weak del Pezzo surface of degree $\ge 3$. Then  any  full strong exceptional collection of line bundles on $X$ corresponds (up to some permutations) to a standard augmentation.
\end{theorem_intro}

The proof is heavily technical and uses some machine computations, we do not give it here. It can be found in the extended version of this paper, see \cite{EXZ}.

What is more interesting, we found that in general Conjecture~\ref{conj_augm} is \textbf{false}. That is, we have found counterexamples to Conjecture~\ref{conj_augm} for some weak del Pezzo surfaces of degree $2$. 
\begin{theorem_intro}[See Section~\ref{section_ce} for details]
\label{theorem_4}
Let $X$ be a weak del Pezzo surface of degree $2$ of type $A_1+2A_3$. There exists a full strong exceptional collection of line bundles on $X$ such that the corresponding toric system  is not a standard augmentation (up to any permutations).
\end{theorem_intro}

The example from Theorem~\ref{theorem_4} was found using a computer, as well as many other counterexamples to Conjecture~\ref{conj_augm}.
It should be noted that any of the surfaces where a counterexample was found can be obtained from a Hirzebruch surface by two blow-ups.
This gives some evidence supporting Conjecture~\ref{conj_twosteps}. It is interesting also that all surfaces  where we found a counterexample to Conjecture~\ref{conj_augm} have holes in the effective cone: such non-effective divisor classes which have a positive multiple being effective.

There are some examples known in the literature of strong exceptional  collections of line bundles which are not standard augmentations, see \cite{HI} and \cite{Ho}. But some permutations in the cited  collections are standard augmentations, thus the cited examples agree with Conjecture \ref{conj_augm}. See Remark~\ref{remark_HI} for more details.

\medskip

In order to construct such a counterexample or to construct examples of cyclic strong exceptional toric systems (see Theorem~\ref{theorem_intro1}) one needs to have a reasonable answer to Question 3: how to check that a given collection of line bundles on a surface is exceptional/strong exceptional? A priori one needs to check cohomology vanishing for lots of line bundles. To do this effectively, we propose a simple geometric criterion for exceptionality of toric systems, see Theorem~\ref{theorem_checkonlyminustwo}. For cyclic strong exceptional toric systems  it says:
\begin{theorem_intro}
Let $X$ be a weak del Pezzo surface. Let $A=(A_1,\ldots,A_n)$ be a toric system on $X$. Then $A$ corresponds to a cyclic strong exceptional collection of line bundles if and only if the following holds:
\begin{itemize}
\item $A_i^2\ge -2$ for all $i=1,\ldots,n$;
\item for any cyclic segment $[k,\ldots,l]\subset [1,\ldots, n]$ (see Definition~\ref{def_kl}) such that $A_k^2=A_{k+1}^2=\ldots=A_{l-1}^2=A_l^2=-2$  the divisor $A_k+\ldots+A_l$ is neither effective nor anti-effective.
\end{itemize}
\end{theorem_intro}

Note that the divisors $D=\pm(A_k+\ldots +A_l)$ in the above theorem are $(-2)$-classes: they satisfy equalities $D^2=-2, D\cdot K_X=0$. It is a simple observation that such $D$ is effective if and only if it is a sum of some $(-2)$-curves on $X$. For any type of weak del Pezzo surfaces  the configuration
of $(-2)$-curves on it is known. Thus it is quite simple to tell
whether a toric system on a weak del Pezzo surface is cyclic strong exceptional or not.

\bigskip
Let us mention some applications of our results.

Matthew Ballard and David Favero in \cite{BF} discovered a relation between cyclic strong exceptional collections in the derived category $\D^b(\coh X)$ of coherent sheaves on a smooth projective variety $X$ and dimension of the category $\D^b(\coh X)$ in the sense of Rouquier. They demonstrated, in particular, that a full strong exceptional collection $(\EE_1,\ldots,\EE_n)$ in $\D^b(\coh X)$ is cyclic strong if the generator $\oplus_i\EE_i$ has generation time equal to $\dim X$. 
For collections of line bundles on surfaces we are able to provide a sort of  converse statement.

\begin{theorem_intro}[Proposition \ref{prop_timetwo}]
\label{theorem_intro6}
Let $X$ be a smooth projective surface with a full cyclic strong exceptional collection
$(\LL_1,\ldots,\LL_n)$
of line bundles. Then the generator $\oplus_i\LL_i$ of the category $\D^b(\coh X)$ has the generation time two. In particular, for any surface $X$ from Table~\ref{table_yes} 
the category $\D^b(\coh X)$ has dimension two.
\end{theorem_intro}

The above theorem provides a new class of varieties $X$ for which the dimension of $\D^b(\coh X)$ equals to the dimension of $X$. This supports a conjecture by Orlov saying that $\dim \D^b(\coh X)=\dim X$ for all smooth projective varieties $X$.

\medskip
Let us briefly mention one other application of our results. Basing on Theorems \ref{theorem_intro1} and \ref{theorem_intro2}, it is demonstrated by the third author in \cite[Theorem 1.9]{Zh} that a full exceptional  collection of line bundles on a smooth projective surface is cyclic strong exceptional iff it is a pull-back exceptional collection (\cite{Br}, \cite{BF}, \cite{VdB}), iff it is a first order approximation to a pull-back exceptional collection in the sense of \cite{BF} and iff it is a $2$-hereditary exceptional collection (\cite{Ch}).

\bigskip
The text is organized as follows. Sections \ref{section_prelim1},\ref{section_prelim2}, and \ref{section_prelim3} contain preliminary material. Here we recall, introduce and discuss necessary notions: divisors on surfaces, their orthogonality properties, $r$-classes, weak del Pezzo surfaces, exceptional collections and toric systems,  admissible sequences, augmentations, transpositions. This material is partially known and/or contained in the paper \cite{HP} by Hille an Perling. Let us point out what is new.

First, it is Proposition~\ref{prop_loslo} providing a handy geometric tool for checking whether a given divisor on a weak del Pezzo surface is left-orthogonal or strong left-orthogonal. Further, a new results is Theorem~\ref{theorem_checkonlyminustwo}. This theorem, in terms of the effective cone, gives an instrument for checking ((cyclic) strong) exceptionality of a given toric system on a weak del Pezzo surface.

Also, in Section \ref{subsection_augm} we define and discuss different variants of augmentation.
We distinguish between elementary augmentations, standard augmentations, augmentations in the weak sense, exceptional augmentations,  strong exceptional augmentations, and  cyclic strong exceptional augmentations. 
Standard augmentations are the most natural but they do not exhaust all toric systems on most surfaces if we do not allow permutations. Augmentations in the weak sense
seem to be a good notion if we do not care about homological properties of collections like exceptionality. We argue that strong exceptional augmentations are suitable for accurate formulation of Conjecture~\ref{conj_augm}: any strong exceptional toric system is a strong exceptional augmentation. 

Note here that terminology in \cite{HP} is different: they say that a strong exceptional collection  has a \emph{normal form} that is a standard augmentation. Normal form of a collection is obtained from the original collection by permutations. Unfortunately, the corresponding definitions in \cite{HP} are not rigorous enough, this forced us to introduce the certain notions of augmentations.

In Section \ref{section_firstkind} we treat toric systems of the first kind: such toric systems $(A_1,\ldots,A_n)$ that $\chi(A_i)\ge 0$ for all $i$. For example, toric systems corresponding to cyclic strong exceptional collections are of the first kind. 
Essentially, we prove that toric systems of the first kind are augmentations in  certain sense. In particular, we prove Theorem~\ref{theorem_intro2}.

Section \ref{section_cyclicstrong} is devoted to the proof of Theorem~\ref{theorem_intro1}. 

In Section~\ref{section_application} we give an application of our results to the study of dimension of derived categories of coherent sheaves. In this section we prove Theorem~\ref{theorem_intro6}.
 
In Section \ref{section_ce} we present a counterexample to Conjecture \ref{conj_augm}. 

Finally, in Appendix we present classification of weak del Pezzo surfaces of degree $\ge 3$. It is needed for Section~\ref{section_cyclicstrong}.

\medskip
{\bf Acknowledgements.} The authors would like to thank Valery Lunts, collaboration with whom initiated this project. We thank Michael Larsen for valuable remarks on the text. The first author is grateful to Indiana University for its hospitality. The third author would like to thank Li Tang for his support. Finally, we are extremely grateful to the referees for the careful reading of the manuscript and for many useful remarks and suggestions.

\section{Divisors on surfaces and their properties}
\label{section_prelim1}

Throughout the paper we assume that the base field $\k$ is algebraically closed and has zero characteristic. All surfaces we consider are supposed to be smooth projective and connected.

\subsection{Divisors and $r$-classes}
\label{subsection_rclasses}

Let $X$ be  a smooth projective surface over $\k$.  Let $K_X$ be a canonical divisor on $X$.
Let $d=K_X^2$ be the degree of $X$, further we always assume that $d>0$. 
For a divisor $D$ on $X$, we will use the following shorthand notations:
$$H^i(D):=H^i(X,\O_X(D)),\quad h^i(D)=\dim H^i(D),\quad \chi(D)=h^0(D)-h^1(D)+h^2(D).$$
By the Riemann-Roch formula, one has
$$\chi(D)=\chi(\O_X)+\frac{D\cdot (D-K_X)}2.$$

The following notions are introduced by Lutz Hille and Markus Perling in \cite[Definition 3.1]{HP}. 
\begin{definition}
\begin{itemize}
\item
A divisor $D$ on $X$ is \emph{numerically left-orthogonal} if $\chi(-D)=0$.
\item
A divisor $D$ on $X$ is \emph{left-orthogonal}  if $h^i(-D)=0$ for all $i$. 
\item
A divisor $D$ on $X$ is \emph{strong left-orthogonal} if $h^i(-D)=0$ for all $i$ and $h^i(D)=0$ for $i\ne 0$.
\end{itemize}
\end{definition} 

\begin{definition}
We call $D$ an \emph{$r$-class} if $D$ is numerically left-orthogonal and $D^2=r$. \end{definition}

Motivation: if $C\subset X$ is a smooth rational irreducible curve, then the class of $C$ in $\Pic(X)$ is an $r$-class where $r=C^2$. 

If $C$ is a smooth rational irreducible curve on $X$ and $r=C^2$, it is said that $C$ is an \emph{$r$-curve}. An $r$-curve is \emph{negative} if $r<0$.

The next propositions are easy consequences of Riemann-Roch formula, see~\cite[Lemma 3.3]{HP} or \cite[Lemma 2.9, Lemma 2.10]{EL}.
\begin{predl}
\label{prop_D2}
Let $X$ be a surface  with $\chi(\O_X)=1$.
Then a divisor $D$ on $X$ is numerically left-orthogonal if and only if
$$D^2+2=-D\cdot K_X.$$
In particular, $D$ is an $r$-class if and only if
$$D^2=r, \quad D\cdot K_X=-r-2.$$
For any numerically left-orthogonal divisor $D$ on $X$ one has
$$\chi(D)=D^2+2=-D\cdot K_X.$$
\end{predl}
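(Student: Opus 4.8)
The plan is to derive everything directly from the Riemann--Roch formula $\chi(D)=\chi(\O_X)+\tfrac12 D\cdot(D-K_X)$ together with the hypothesis $\chi(\O_X)=1$. Since all three assertions are purely numerical statements about intersection numbers, no geometric input is needed beyond Riemann--Roch; the only ``work'' is bookkeeping of signs.

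First I would compute $\chi(-D)$. Substituting $-D$ for $D$ in Riemann--Roch gives
$$\chi(-D)=1+\tfrac12\,(-D)\cdot(-D-K_X)=1+\tfrac12\bigl(D^2+D\cdot K_X\bigr).$$
Hence $\chi(-D)=0$ is equivalent to $D^2+D\cdot K_X=-2$, i.e.\ to $D^2+2=-D\cdot K_X$. This proves the first equivalence, which is the definition of numerical left-orthogonality. The characterization of an $r$-class is then immediate: if in addition $D^2=r$, the identity $D^2+2=-D\cdot K_X$ reads $D\cdot K_X=-r-2$, and conversely the two equalities $D^2=r$, $D\cdot K_X=-r-2$ force $D^2+2=-D\cdot K_X$, so $D$ is numerically left-orthogonal with $D^2=r$.

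Finally, for a numerically left-orthogonal $D$ I would plug $-D\cdot K_X=D^2+2$ back into Riemann--Roch for $\chi(D)$ itself:
$$\chi(D)=1+\tfrac12\bigl(D^2-D\cdot K_X\bigr)=1+\tfrac12\bigl(D^2+(D^2+2)\bigr)=D^2+2,$$
and of course $D^2+2=-D\cdot K_X$ by the defining relation. This gives the last chain of equalities.

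There is no real obstacle here; the statement is a formal consequence of Riemann--Roch once one keeps track of the fact that $-D\cdot K_X$ appears with opposite sign in $\chi(-D)$ versus $\chi(D)$. If anything deserves attention, it is only making sure the condition $\chi(\O_X)=1$ is invoked (it holds for all surfaces relevant to the paper, in particular all rational and weak del Pezzo surfaces) so that the constant term is $1$ rather than a general $\chi(\O_X)$.
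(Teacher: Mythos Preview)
Your proof is correct and follows exactly the approach the paper indicates: the paper does not spell out a proof here but simply states that the proposition is an easy consequence of the Riemann--Roch formula (citing \cite[Lemma 3.3]{HP} and \cite[Lemma 2.9, 2.10]{EL}). Your explicit bookkeeping with $\chi(-D)=1+\tfrac12(D^2+D\cdot K_X)$ and the substitution back into $\chi(D)$ is precisely what those references contain.
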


\begin{predl}
\label{prop_classes0}
Let $X$ be a surface  with $\chi(\O_X)=1$.
Suppose $D_1,D_2$ are numerically left-orthogonal divisors on $X$. 
Then $D_1+D_2$ is numerically left-orthogonal if and only if $D_1D_2=1$. If that is the case, then 
$$\chi(D_1+D_2)=\chi(D_1)+\chi(D_2)\qquad \text{and}\qquad (D_1+D_2)^2=D_1^2+D_2^2+2.$$
\end{predl}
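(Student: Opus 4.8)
The plan is to reduce everything to the numerical criterion of Proposition~\ref{prop_D2} and then simply expand squares. First I would record the only inputs that are needed: since $D_1$ and $D_2$ are numerically left-orthogonal and $\chi(\O_X)=1$, Proposition~\ref{prop_D2} gives $D_i^2+2=-D_i\cdot K_X$ for $i=1,2$, together with $\chi(D_i)=D_i^2+2$. Nothing else about $X$ or the $D_i$ will be used.

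Next I would apply the same criterion to the sum $D_1+D_2$: by Proposition~\ref{prop_D2}, $D_1+D_2$ is numerically left-orthogonal if and only if $(D_1+D_2)^2+2=-(D_1+D_2)\cdot K_X$. Expanding the left-hand side as $D_1^2+2\,D_1\cdot D_2+D_2^2+2$ and the right-hand side, using the relations just recorded, as $-D_1\cdot K_X-D_2\cdot K_X=(D_1^2+2)+(D_2^2+2)$, the equation collapses to $2\,D_1\cdot D_2=2$, i.e.\ $D_1\cdot D_2=1$. This gives the claimed equivalence.

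Finally, assuming $D_1\cdot D_2=1$, the identity $(D_1+D_2)^2=D_1^2+2\,D_1\cdot D_2+D_2^2=D_1^2+D_2^2+2$ is immediate, and then $\chi(D_1+D_2)=(D_1+D_2)^2+2=(D_1^2+2)+(D_2^2+2)=\chi(D_1)+\chi(D_2)$ by the $\chi$-formula of Proposition~\ref{prop_D2}. There is no real obstacle here: the argument is a one-line computation. The only point worth a word of caution is that the additivity of $\chi$ is an identity of integers coming from Riemann--Roch, not a statement about the individual cohomology groups $H^i$.
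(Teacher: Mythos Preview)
Your proof is correct and matches the paper's approach: the paper does not give a proof but refers to \cite[Lemma 3.3]{HP} and \cite[Lemma 2.9, 2.10]{EL}, noting that the result is an easy consequence of Riemann--Roch, which is exactly what your computation via Proposition~\ref{prop_D2} does.
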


\begin{lemma}
\label{lemma_dprime}
Let $D$ be an $r$-class on a surface $X$ of degree $d$ with $\chi(\O_X)=1$. Then $D'=-K_X-D$ is an $r'$-class where $r+r'=d-4$. 
\end{lemma}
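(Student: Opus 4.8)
The plan is to prove Lemma~\ref{lemma_dprime} by a direct computation with intersection numbers, using Proposition~\ref{prop_D2} as the main tool. Since $D$ is an $r$-class on $X$ with $\chi(\O_X)=1$, Proposition~\ref{prop_D2} tells us exactly that $D^2=r$ and $D\cdot K_X=-r-2$. I set $D'=-K_X-D$ and want to show $D'$ is an $r'$-class with $r+r'=d-4$, i.e. that $D'$ is numerically left-orthogonal and that $(D')^2=d-4-r$.

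First I would compute $(D')^2$. Expanding, $(D')^2=(-K_X-D)^2=K_X^2+2K_X\cdot D+D^2=d+2(-r-2)+r=d-r-4$. So if $D'$ turns out to be numerically left-orthogonal, its self-intersection is $r'=d-r-4$, which gives $r+r'=d-4$ as required. Second, I would verify numerical left-orthogonality of $D'$ via the criterion in Proposition~\ref{prop_D2}, namely that $(D')^2+2=-D'\cdot K_X$. Compute $-D'\cdot K_X=-(-K_X-D)\cdot K_X=K_X^2+D\cdot K_X=d+(-r-2)=d-r-2$. On the other hand $(D')^2+2=(d-r-4)+2=d-r-2$. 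These agree, so $D'$ is numerically left-orthogonal, hence an $r'$-class with $r'=d-r-4$.

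There is essentially no obstacle here: the statement is a one-line consequence of Riemann--Roch once the intersection-number bookkeeping in Proposition~\ref{prop_D2} is in hand. The only thing to be careful about is consistently using the normalization $\chi(\O_X)=1$ (which holds for the surfaces of interest, all rational) and the sign conventions for $D\cdot K_X$ in the definition of an $r$-class. An alternative, essentially equivalent, route would be to apply Proposition~\ref{prop_classes0}: since $D$ and $-K_X-D$ are both candidate numerically left-orthogonal divisors summing to $-K_X$, and $-K_X$ is itself numerically left-orthogonal (a $(d-2)$-class, as $(-K_X)^2=d$ and $-(-K_X)\cdot K_X = d$... wait, one checks $d+2 = d$ fails, so $-K_X$ is left-orthogonal only when... ) — so this second route is more delicate and I would not pursue it; the direct computation above is cleanest.
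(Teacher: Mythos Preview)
Your proof is correct and is essentially the same direct intersection-number computation as the paper's, just with the two steps in the opposite order: the paper first checks $(D')^2+D'\cdot K_X=D^2+D\cdot K_X=-2$ to get numerical left-orthogonality, then computes $(D')^2$, whereas you compute $(D')^2$ first and then verify $(D')^2+2=-D'\cdot K_X$. Your abandoned alternative via Proposition~\ref{prop_classes0} is indeed not the right route (since $-K_X$ is not numerically left-orthogonal unless $d=2$), and you were right to discard it.
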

\begin{proof}
Indeed, 
$$(-K_X-D)^2+(-K_X-D)\cdot K_X=D^2+D\cdot K_X=-2$$
and thus $D'$ is numerically left-orthogonal by Proposition~\ref{prop_D2}.
Also, using Proposition~\ref{prop_D2} we get $(D')^2=K_X^2+2D\cdot K_X+D^2=d+(-2r-4)+r=d-4-r$.
\end{proof}

Denote the set of $(-1)$-classes on $X$ by $I(X)$ and the set of $(-2)$-classes on $X$ by $R(X)$. As we will see, $I(X)$ and $R(X)$ depend only on $\deg X$ and do not depend on geometry of $X$.

Suppose  that $X$ is a blow-up of $\P^2$ at $n$ (maybe infinitesimal) points. Then the Picard group $\Pic(X)$ of $X$ is a finitely generated abelian group with the standard  basis $L,E_1,\ldots,E_n$. 
We will often use the following shorthand notation:
\begin{equation}
\label{eq_Lij}
E_{i_1\ldots i_k}:=E_{i_1}+\ldots+E_{i_k},\quad L_{i_1\ldots i_k}:=L-E_{i_1\ldots i_k}.
\end{equation}
The intersection form in the basis  $L,E_1,\ldots,E_n$ is given by $L^2=1, E_i^2=-1, L\cdot E_i=0, E_i\cdot E_j=0$ for $i\ne j$. We have $K_X=-3L+E_1+\ldots+E_n$ and $\deg(X)=9-n$. That is, the abelian group $\Pic(X)$ with the intersection form and the distinguished element $K_X$ depend only on $n$ and do not depend on the geometry of $X$. Since $I(X)$ and $R(X)$ are determined by numerical conditions $D\cdot K_X=D^2=-1$ and $D\cdot K_X=0, D^2=-2$ respectively, they also depend only on $\deg(X)$ and do not depend on geometry of $X$.
If  $\deg(X) \ge 1$ (or $n\le 8$), then $R(X)$ is a root system in some subspace in $N_X=(K_X)^{\perp}\subset \Pic(X)\otimes\R$ (by  \cite[Theorem 23.9]{Ma} for $1\le \deg(X)\le 6$, trivial for $\deg(X)=7,8,9$). Moreover, if $\deg(X)\le 6$ then $R(X)$ spans $N_X$. 

\begin{table}[h]
\caption{Root systems $R(X)$ for blow-ups $X$ of $\P^2$ for  $1\le \deg(X)\le 9$}
\begin{center}
\begin{tabular}{|c|c|c|c|c|c|c|c|c|c|}
		 \hline
			degree of $X$ & 9& 8& 7 & 6& 5&4&3&2&1\\
		 \hline
			type & $\emptyset$ & $\emptyset$ & $A_1$ & $A_1+A_2$ & $A_4$ & $D_5$ & $E_6$ & $E_7$ & $E_8$\\
		 \hline	
		  $|R(X)|$ & $0$ & $0$ & $2$ & $8$ & $20$ & $40$ & $72$ & $126$ & $240$\\
		 \hline	
			$|I(X)|$ & $0$ & $1$ & $3$ & $6$ & $10$ & $16$ & $27$ & $56$ & $240$\\
		 \hline	
\end{tabular}
\end{center}

\label{table_root}
\end{table}

\begin{definition}
\label{def_effirr}
Denote by $R^{\rm eff}(X)\subset R(X)$ the subset of effective $(-2)$-classes. Denote by $R^{\rm irr}(X)\subset R^{\rm eff}(X)$ and $I^{\rm irr}(X)\subset I(X)$ the subsets of classes of $(-2)$-curves and $(-1)$-curves respectively. 
Denote by $R^{\rm slo}(X)\subset R^{\rm lo}(X)\subset R(X)$ the subsets of strong left-orthogonal and left-orthogonal $(-2)$-classes respectively. 
\end{definition}
Note that  $R^{\rm eff}(X),R^{\rm irr}(X),I^{\rm irr}(X), R^{\rm lo}(X)$ and $R^{\rm slo}(X)$ essentially depend on the surface $X$, in contrast with $R(X)$.

\subsection{Weak del Pezzo surfaces}
\label{section_wdp}
By definition, a weak del Pezzo surface is a smooth connected projective rational surface~$X$ such that $K_X^2>0$ and $-K_X$ is nef. A del Pezzo surface is a  smooth connected projective rational surface $X$ such that $-K_X$ is ample.  We refer to Igor Dolgachev~\cite[Chapter 8]{Do} or Ulrich Derenthal~\cite{De} for the main properties of weak del Pezzo surfaces. A weak del Pezzo surface is a del Pezzo surface if and only if it has no $(-2)$-curves. Every weak del Pezzo surface $X$ except for Hirzebruch surfaces $\mathbb F_0$ and $\mathbb F_2$ is a blow-up of $\P^2$ at several (maybe infinitesimal) points. That is, there exists a sequence 
$$X=X_n\xra{p_n} X_{n-1}\to \ldots\to  X_1\xra{p_1} X_0=\P^2$$
of $n$ blow-ups, where $p_k$ is the blow-up of point $P_k\in X_{k-1}$. 
Moreover, the surface $X_n$ as above is a weak del Pezzo surface if and only if $n\le 8$ and for any $k$ the point $P_k$ does not belong to a $(-2)$-curve on $X_{k-1}$.

\begin{lemma}
\label{lemma_negcurve}
Let $C\subset X$ be a reduced irreducible curve with $C^2<0$ on a weak del Pezzo surface $X$.  Then $C$ is a smooth rational curve and $C^2=-2$ or $-1$. If $X$ is a del Pezzo surface then $C^2=-1$.
\end{lemma}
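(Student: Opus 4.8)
The statement has two parts: first, $C$ is a smooth rational curve with $C^2\in\{-2,-1\}$; second, if $X$ is del Pezzo then $C^2=-1$. The plan is to use the adjunction formula together with the nefness of $-K_X$. By adjunction on the smooth surface $X$, one has $2p_a(C)-2 = C^2 + C\cdot K_X$, where $p_a(C)$ is the arithmetic genus of the (possibly singular) integral curve $C$. Rearranging gives $C\cdot(-K_X) = C^2 + 2 - 2p_a(C) \le C^2 + 2$, since $p_a(C)\ge 0$. The key input is that $-K_X$ is nef, so $C\cdot(-K_X)\ge 0$; combined with the hypothesis $C^2 < 0$, i.e. $C^2 \le -1$, this forces $0 \le C\cdot(-K_X) \le C^2 + 2 \le 1$.

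Now I would split into the two cases permitted by these inequalities. If $C^2 = -1$, then $0 \le C\cdot(-K_X) \le 1$; but actually $C\cdot(-K_X) = C^2 + 2 - 2p_a(C) = 1 - 2p_a(C)$, which is a nonnegative integer only if $p_a(C)=0$ and then $C\cdot(-K_X)=1$. If $C^2 = -2$, then $C\cdot(-K_X) = -2p_a(C) \ge 0$ forces $p_a(C)=0$ and $C\cdot K_X = 0$. If $C^2 \le -3$, then $C\cdot(-K_X) \le C^2 + 2 < 0$, contradicting nefness of $-K_X$; so this case cannot occur. In all surviving cases $p_a(C)=0$, and since $C$ is an integral curve over an algebraically closed field of characteristic zero, $p_a(C)=0$ means $C$ is a smooth rational curve (a singular integral curve would have strictly positive arithmetic genus, as the normalization map contributes a nonnegative correction). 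This establishes the first part.

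For the del Pezzo case, recall that $X$ is del Pezzo precisely when $-K_X$ is ample. If $C^2 = -2$, the computation above gave $C\cdot(-K_X) = 0$, which contradicts ampleness of $-K_X$ (an ample divisor has strictly positive intersection with every curve). Hence $C^2 \ne -2$, and by the first part $C^2 = -1$. Alternatively, one may invoke the stated fact that a weak del Pezzo surface is del Pezzo if and only if it has no $(-2)$-curves, but the direct ampleness argument is cleaner and self-contained.

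\textbf{Main obstacle.} There is no serious obstacle here; the only point requiring a little care is the passage from $p_a(C)=0$ to ``$C$ is smooth rational,'' which uses that for an integral projective curve $C$ with normalization $\tilde C$ one has $p_a(C) = p_g(\tilde C) + \delta$ with $\delta \ge 0$ the sum of local $\delta$-invariants, so $p_a(C)=0$ forces both $\delta = 0$ (hence $C$ normal, hence smooth as a curve) and $p_g(\tilde C)=0$ (hence $C\cong\mathbb P^1$). Everything else is a two-line application of adjunction plus nefness, and the integrality of $C\cdot(-K_X)$ does the rest of the bookkeeping.
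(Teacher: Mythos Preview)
Your proof is correct and follows essentially the same route as the paper: both use the adjunction formula $2p_a(C)-2=C^2+C\cdot K_X$ (the paper phrases it via Riemann--Roch as $h^1(\O_C)=1+\tfrac{C^2+C\cdot K_X}{2}$) together with nefness of $-K_X$ and $C^2<0$ to force $p_a(C)=0$ and $C^2\in\{-1,-2\}$, then cite the standard fact that an integral curve of arithmetic genus zero is smooth rational. Your normalization/$\delta$-invariant justification of the last step is a bit more explicit than the paper's reference to \cite[Exer.~IV.1.8]{Ha}, and your case split on $C^2$ is slightly more verbose than needed (once you have $0\le C\cdot(-K_X)\le C^2+2$ you immediately get $C^2\ge -2$ and then $p_a(C)=0$ in both remaining cases), but the substance is identical.
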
 
\begin{proof}
By the Riemann-Roch formula, one has 
$$\chi(\O_C)=\chi(\O_X)-\chi(\O_X(-C))=-\frac{C^2+C\cdot K_X}{2}.$$
Since $C$ is irreducible and reduced, one has $h^0(\O_C)=1$. Hence
$$0\le h^1(\O_C)=1+\frac{C^2+C\cdot K_X}2.$$
Note that $C\cdot K_X\le 0$ because $-K_X$ is nef and $C^2<0$ by assumptions. It follows that $h^1(\O_C)=0$ and $p_a(\O_C)=0$. Hence $C$ is rational and smooth by \cite[Exer. IV.1.8]{Ha}. Moreover, $C^2+C\cdot K_X=-2$, therefore $C^2=-2$ or $-1$. If $X$ is a del Pezzo surface, then $C\cdot K_X<0$ and the case $C^2=-2$ is impossible.
\end{proof}

For the future use we formulate
\begin{predl}[See {\cite[Section 8.2.7]{Do}}]
\label{prop_roots}
Let $X$ be a weak del Pezzo surface. Then there exists a root subsystem $\Phi$ in the root system $R(X)$ such that the set $R^{\rm irr}(X)$ of classes of $(-2)$-curves is the set of simple roots in $\Phi$ and  the set $R^{\rm eff}(X)$ of effective $(-2)$-classes  is the set of positive roots in $\Phi$. Moreover, $\Phi$ is a disjoint union of root systems of types $A,D,E$.
\end{predl}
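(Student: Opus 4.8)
The plan is to produce $\Phi$ by hand: take it to be the root subsystem of $R(X)$ generated (under the associated reflections) by the classes of the $(-2)$-curves on $X$, and then check that these classes form a system of simple roots whose positive roots are exactly the effective $(-2)$-classes. First I would dispose of the two cases that are not blow-ups of $\P^2$: on $\mathbb F_0$ there is no $(-2)$-curve, so $\Phi=\emptyset$, and on $\mathbb F_2$ there is a single one, so $\Phi$ is of type $A_1$. For the remaining $X$, a blow-up of $\P^2$ at $n\le 8$ (possibly infinitesimal) points, recall that $N_X=(K_X)^{\perp}$ is negative definite by the Hodge index theorem together with $K_X^2>0$, and that $R(X)$ is a finite root system in $N_X$ (Table~\ref{table_root}).

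The key step, and the one I expect to be the main obstacle, is the following: \emph{every effective $(-2)$-class is a non-negative integer combination of classes of $(-2)$-curves}. To prove it I would write such a class as $D=\sum_j n_j\Gamma_j$, a sum of its distinct irreducible components with $n_j>0$. Since $-K_X$ is nef and $D\cdot K_X=0$, the identity $0=-K_X\cdot D=\sum_j n_j(-K_X\cdot\Gamma_j)$ is a sum of non-negative terms, forcing $K_X\cdot\Gamma_j=0$ for every $j$; hence $\Gamma_j\in N_X$ gives $\Gamma_j^2<0$, while adjunction gives $\Gamma_j^2=2p_a(\Gamma_j)-2\ge -2$, so $p_a(\Gamma_j)=0$ and $\Gamma_j^2=-2$. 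Thus each component is a $(-2)$-curve, and conversely any effective integral combination of $(-2)$-curves that is a $(-2)$-class is obviously an effective $(-2)$-class.

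Next I would analyse the configuration of the $(-2)$-curves $C_1,\dots,C_k$. They are linearly independent, living in the negative definite lattice $N_X$. For $i\ne j$ the class $C_i+C_j$ is again effective with trivial $K_X$-degree, so $(C_i+C_j)^2=2p_a(C_i+C_j)-2\ge -2$; comparing with $(C_i+C_j)^2<0$ forces $C_i\cdot C_j\in\{0,1\}$. Therefore the Gram matrix $(C_i\cdot C_j)$ is minus a simply-laced generalized Cartan matrix, and being negative definite it is minus a genuine Cartan matrix, whose connected components are of type $A_m$, $D_m$, $E_6$, $E_7$, or $E_8$. I would then let $\Phi$ be the root system with these simple roots, realized inside $R(X)$ as the set of norm $-2$ vectors in the sublattice $\bigoplus_i\Z\,[C_i]$ — here one invokes the standard fact that in a simply-laced root lattice every norm $-2$ vector is a root. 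Then $\Phi$ is a disjoint union of root systems of types $A,D,E$; its simple roots are the classes in $R^{\rm irr}(X)$; and its positive roots, being the non-negative integral combinations of the $C_i$ with self-intersection $-2$, coincide with $R^{\rm eff}(X)$ by the key step. Finally $\Phi=\Phi^+\sqcup(-\Phi^+)$, so each root of $\Phi$ is an effective or an anti-effective $(-2)$-class, finishing the argument. The only genuinely geometric input is the key step, where negative definiteness of $K_X^{\perp}$ — equivalently the hypotheses that $-K_X$ is nef and $K_X^2>0$ — is essential; everything else is root-system bookkeeping.
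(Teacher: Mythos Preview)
The paper does not actually prove this proposition: it is stated with a citation to \cite[Section 8.2.7]{Do} and no proof is given. So there is nothing to compare against, and your sketch is in fact supplying what the paper outsources.

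Your argument is essentially correct and follows the standard line. Two small points to tighten:

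\begin{itemize}
\item Your justification that $C_i\cdot C_j\in\{0,1\}$ via $p_a(C_i+C_j)\ge 0$ is not quite right: if $C_i$ and $C_j$ are disjoint then $C_i+C_j$ is disconnected and $p_a(C_i+C_j)=-1$, so $(C_i+C_j)^2=-4$, contradicting your inequality $(C_i+C_j)^2\ge -2$. The clean argument is simply that distinct irreducible curves satisfy $C_i\cdot C_j\ge 0$, while negative definiteness of $N_X$ gives $(C_i+C_j)^2=-4+2C_i\cdot C_j<0$, hence $C_i\cdot C_j\le 1$.
\item You assert linear independence of the $C_i$ by saying they live in a negative definite lattice, but that alone does not force independence. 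The usual argument is needed: a relation $\sum a_iC_i=0$ splits as $D_+=D_-$ with $D_\pm$ effective and without common components, so $D_+^2=D_+\cdot D_-\ge 0$, contradicting negative definiteness unless $D_+=D_-=0$.
\end{itemize}

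With these fixes your proof is complete; the key step (that every effective $(-2)$-class decomposes into $(-2)$-curves, using nefness of $-K_X$) is exactly the geometric core, and the rest is indeed root-system bookkeeping.
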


Two weak del Pezzo surfaces $X$ and $Y$ are said to have the same \emph{type} if there exists an isomorphism $\Pic(X)\to \Pic(Y)$ preserving the intersection form, the canonical class and identifying the sets of negative curves. Thus, for a  weak del Pezzo surface it makes sense to consider the configuration (i.e., the incidence graph) of irreducible $(-2)$-curves on it. By Proposition~\ref{prop_roots}, this configuration is always a disjoint union of Dynkin graphs of types $A,D,E$ and will be denoted respectively.
In most cases, the configuration of irreducible $(-2)$-curves determines the type uniquely. Surfaces of different types and with the same configuration of irreducible $(-2)$-curves are distinguished by the number of irreducible $(-1)$-curves. 
A surface of degree $d$ with the configuration of $(-2)$-curves $\Gamma$ and with $m$ irreducible $(-1)$-curves is denoted by $X_{d,\Gamma,m}$. Number $m$ is omitted if $d$ and $\Gamma$ determine the type uniquely.
For example, $X_{4,A_2}$ denotes a weak del Pezzo surface of degree $4$ with $(-2)$-curves forming the Dynkin graph $A_2$ and $X_{4,A_3,5}$ denotes a weak del Pezzo surface of degree $4$ with $(-2)$-curves forming the Dynkin graph $A_3$ and with five $(-1)$-curves. 

Therefore, all weak del Pezzo surfaces of the same type have ``the same'' sets of negative curves. In Appendix A we list all possible types of weak del Pezzo surfaces of degrees from $3$ to $7$, for any type we specify the sets of $(-2)$-curves.

\medskip

Now we formulate	criteria for checking left-orthogonality and strong left-orthogonality of divisors on weak del Pezzo surfaces. Most of them are valid in greater generality: for all rational surfaces of positive degree.

\begin{predl}
\label{prop_loslo}
Let $D$ be an $r$-class on a weak del Pezzo surface~$X$ of degree $d$. Then Table~\ref{table_loslo} gives necessary and sufficient conditions for $D$ being left-orthogonal and strong left-orthogonal. 
In particular, if $D^2\ge -1$ then $D$ is
left-orthogonal if and only if $D$ is strong left-orthogonal.

More generally, let $X$ be a rational surface of degree $d>0$ and $D$ be an $r$-class on $X$. Then criteria of left-orthogonality from Table~\ref{table_loslo} hold, and for $r\le -2$ criteria of strong left-orthogonality hold.
\begin{table}[h]
\caption{Criteria of left-orthogonality and strong left-orthogonality of divisors on rational surfaces of positive degree. Criteria marked with $*$ are valid only for weak del Pezzo surfaces.}
\label{table_loslo}

\begin{center}
\begin{tabular}{|c|c|c|}
		 \hline
			$r$ & $D$ is left-orthogonal? & $D$ is strong left-orthogonal?\\
		 \hline
			$\le -3$ & iff $h^0(-D)=0$ & no \\
		 \hline
			$-2$ & iff $h^0(-D)=0$ & iff $h^0(D)=h^0(-D)=0$ \\
		 \hline	
			$-1\le r \le d-3$ & yes & yes $^*$ \\
		 \hline	
			$\ge d-2$ & iff $h^0(K_X+D)=0$ & iff $h^0(K_X+D)=0\,^*$ \\
		 \hline	
\end{tabular}
\end{center}
\end{table}
\end{predl}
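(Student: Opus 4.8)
The plan is to compute $h^i(-D)$ and $h^i(D)$ directly from the Riemann--Roch data, using Serre duality, the nefness of $-K_X$, and the elementary fact that on a rational surface $h^2$ of a divisor is controlled by an $h^0$ via Serre duality. Recall that $D$ is an $r$-class means $\chi(-D) = 0$, $D^2 = r$, and (by Proposition~\ref{prop_D2}) $D\cdot K_X = -r-2$. First I would dispose of $h^2$: by Serre duality $h^2(-D) = h^0(K_X+D)$ and $h^2(D) = h^0(K_X - D)$. Since $-K_X$ is nef and $(K_X-D)\cdot(-K_X) = -K_X^2 + D\cdot K_X = -d - r - 2 < 0$ for $r \ge -1$ (and more generally this sign analysis pins down when $K_X-D$ and $K_X+D$ can be effective), the term $h^2(D)$ vanishes outside the bottom row; similarly $h^2(-D) = h^0(K_X+D)$ vanishes precisely when $(K_X+D)\cdot(-K_X) < 0$, i.e. when $-d + r + 2 < 0$, that is $r \le d-3$, which is exactly the threshold separating the third and fourth rows of the table. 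This already explains the shape of the table.

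Next I would handle left-orthogonality row by row. Since $\chi(-D) = 0$ we have $h^0(-D) - h^1(-D) + h^2(-D) = 0$, so $D$ is left-orthogonal iff $h^0(-D) = 0$ and $h^2(-D) = 0$ (the three being nonnegative and summing with signs to zero forces all to vanish once we know $h^0(-D)=0$ and $h^2(-D)=0$; conversely $h^1(-D) = h^0(-D) + h^2(-D)$). By the $h^2$ computation above, $h^2(-D) = h^0(K_X+D)$, which vanishes automatically when $r \le d-3$ and otherwise is the stated condition; in the range $-1 \le r \le d-3$ one must further argue $h^0(-D) = 0$ automatically. For that I would use that $-D$ is then an $r'$-class with $r' = d-4-r$ (Lemma~\ref{lemma_dprime}) having $(-D)\cdot(-K_X) = D\cdot K_X = -r-2 < 0$ when $r > -2$, so $-D\cdot(-K_X) < 0$ forces $h^0(-D) = 0$ since any effective divisor pairs nonnegatively with the nef class $-K_X$. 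This is the one place the weak del Pezzo hypothesis (really just nefness of $-K_X$, hence valid on all positive-degree rational surfaces) is used, matching the claim in the statement.

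Finally, for strong left-orthogonality I must in addition control $h^i(D)$ for $i \ne 0$. The term $h^2(D) = h^0(K_X - D)$: I would show $K_X - D$ is never effective when $r \ge -2$ by pairing with $-K_X$ as above (giving $\le 0$, with care at the boundary), which handles the "no" in the top row being about $h^0(-D)$, not $h^2$. Then $h^1(D)$: using $\chi(D) = r + 2$ (Proposition~\ref{prop_D2}) and $h^2(D) = 0$, we get $h^1(D) = h^0(D) - (r+2)$, so $h^1(D) = 0$ iff $h^0(D) = r+2$. For $r = -2$ this says $h^0(D) = 0$, explaining that row; for $-1 \le r \le d-3$ one needs the genuinely geometric input $h^0(D) = r+2 = \chi(D)$, i.e. $h^1(D) = 0$ for every $r$-class $D$ in this range on a weak del Pezzo surface — this is the $*$-marked assertion and, I expect, the main obstacle. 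I would prove it by induction on the degree / via the blow-down structure, or cite the analogous vanishing in \cite{HP} or \cite{EL}: the point is that an $r$-class with $r \ge -1$ on a weak del Pezzo surface, after reduction, either is (numerically) a nef class plus effective exceptional contributions with no higher cohomology, or can be reduced to a lower surface by a blow-down along which cohomology is preserved. Assembling these row-by-row computations gives exactly Table~\ref{table_loslo}, and the "in particular" clause follows since for $r \ge -1$ the only extra condition for strong over plain left-orthogonality, namely $h^1(D) = 0$, holds automatically.
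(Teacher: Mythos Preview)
Your argument has a genuine gap stemming from a misreading of the statement. You write that nefness of $-K_X$ is ``valid on all positive-degree rational surfaces'', but this is false: $\mathbb{F}_3$ has $K_X^2=8>0$ yet $-K_X$ is not nef (it pairs negatively with the $(-3)$-curve). The proposition asserts that the \emph{left-orthogonality} column holds on every rational surface of positive degree; only the starred strong-left-orthogonality entries need the weak del Pezzo hypothesis. Your nefness argument for $h^0(-D)=0$ (when $r\ge -1$) and $h^0(K_X+D)=0$ (when $r\le d-3$) therefore does not establish the general case. The paper instead proves $h^0(-D)=0$ for any $r$-class with $r\ge -1$ on any rational surface of positive degree (Lemma~\ref{lemma_sqrt}) by an explicit argument in the standard basis on blow-ups of $\P^2$ and of Hirzebruch surfaces, with no nefness assumed; the vanishing of $h^0(K_X+D)$ for $r\le d-3$ then follows by applying the same lemma to $D'=-K_X-D$, which is an $r'$-class with $r'\ge -1$. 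On a genuine weak del Pezzo surface your nefness shortcut is of course valid and slicker than the paper's route for these steps.

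Your treatment of the key vanishing $h^1(D)=0$ for $r\ge -1$ (the $*$-marked assertion) is also not a proof: you gesture at induction along blow-downs or a citation, but give no mechanism. The paper's argument is concrete and different from anything you sketch: choose an irreducible $Z\in|-K_X|$ (this exists on a weak del Pezzo surface by Lemma~\ref{lemma_irred}), use the exact sequence $0\to\O_X(K_X-D)\to\O_X(-D)\to\O_Z(-D)\to 0$, observe that $(-D)\cdot Z=D\cdot K_X=-r-2<0$ forces $h^0(\O_Z(-D))=0$ since $Z$ is irreducible, whence $h^1(K_X-D)=0$ and by Serre duality $h^1(D)=0$. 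This is exactly where the weak del Pezzo hypothesis is genuinely needed, and the paper's example of $D=B+F$ on $\mathbb{F}_3$ shows the conclusion can fail without it.
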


For the proof we will need some lemmas.
\begin{lemma}
\label{lemma_Kpositive}
Let $X$ be  a rational surface with $\deg (X)=K_X^2>0$. Then $-K_X$ is effective.
\end{lemma}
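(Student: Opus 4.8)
The plan is to compute the Euler characteristic $\chi(-K_X)$ via Riemann--Roch, show it is strictly positive, and then eliminate the $h^2$ term by Serre duality, so as to conclude $h^0(-K_X)>0$.

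First I would use that $X$ is rational: then $q(X)=h^1(\O_X)=0$ and $p_g(X)=h^2(\O_X)=0$, so $\chi(\O_X)=1$. Plugging $D=-K_X$ into the Riemann--Roch formula recalled above gives
$$\chi(-K_X)=\chi(\O_X)+\frac{(-K_X)\cdot(-K_X-K_X)}{2}=1+K_X^2=1+d,$$
which is $\ge 2$ since by hypothesis $d=K_X^2>0$.

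Next I would control $h^2(-K_X)$. By Serre duality $h^2(-K_X)=h^0(K_X+K_X)=h^0(2K_X)$, and this vanishes because every plurigenus of a rational surface is zero (equivalently, by Castelnuovo's criterion $P_2=h^0(2K_X)=0$). Hence
$$h^0(-K_X)=\chi(-K_X)+h^1(-K_X)\ge\chi(-K_X)=1+d>0,$$
so $-K_X$ is effective.

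The argument is entirely routine; the only ingredient beyond Riemann--Roch is the standard vanishing $h^0(2K_X)=0$ on a rational surface, which is precisely the point where rationality (rather than, say, nefness of $-K_X$) is used. Accordingly I do not expect any real obstacle here.
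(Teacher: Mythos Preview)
Your proof is correct and follows essentially the same argument as the paper: compute $\chi(-K_X)=K_X^2+1\ge 2$ by Riemann--Roch, use Serre duality and the vanishing $h^0(2K_X)=0$ on a rational surface to kill $h^2(-K_X)$, and conclude $h^0(-K_X)\ge 2$.
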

\begin{proof}
We have $h^2(-K_X)=h^0(2K_X)=0$ by Serre duality since $X$ is rational. By Riemann-Roch formula we have $\chi(-K_X)=h^0(-K_X)-h^1(-K_X)=K_X^2+1\ge 2$. It follows that 
$h^0(-K_X)\ge 2$ and thus $-K_X$ is effective. 
\end{proof}

\begin{lemma}
\label{lemma_h0h2}
Let $X$ be  a rational surface with  $-K_X$ effective.
Let $D$ be a divisor on $X$ with $h^0(-D)=0$. Then $h^2(D)=0$.
\end{lemma}
\begin{proof}
We have $h^2(D)=h^0(K_X-D)$ by Serre duality. Since $-K_X$ is effective, we have an 
embedding
$$H^0(X,\O_X(K_X-D))\subset H^0(X,\O_X(-D))=0,$$
hence $h^0(K_X-D)=0$.
\end{proof}

\begin{lemma}
\label{lemma_sqrt}
Let $X$ be  a rational surface with  $\deg(X)>0$ and $D$ be an $r$-class on $X$ with $r\ge -1$. Then
\begin{enumerate}
\item $h^0(-D)=0$,
\item $h^0(D)>0$.
\end{enumerate}
\end{lemma}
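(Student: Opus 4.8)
I would split the lemma into the two parts and derive (2) from (1).

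\emph{Part (2) from part (1).} Since $D$ is numerically left-orthogonal, Proposition~\ref{prop_D2} gives $\chi(D)=D^2+2=r+2\ge 1$. Granting (1), we have $h^0(-D)=0$; as $-K_X$ is effective by Lemma~\ref{lemma_Kpositive}, Lemma~\ref{lemma_h0h2} (applied to $D$) yields $h^2(D)=0$. Hence $h^0(D)=\chi(D)+h^1(D)\ge r+2\ge 1$, which is (2).

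\emph{Part (1).} The plan is to show $-D$ is not effective. Assume for contradiction that $h^0(-D)>0$. Since $D\cdot K_X=-r-2\ne 0$ we have $D\not\sim 0$, so $h^0(D)=0$ (a class and its negative cannot both be effective); then $\chi(D)=r+2\ge 1$ forces $h^2(D)=h^0(K_X-D)>0$, i.e.\ $K_X-D$ is effective. Combining effectivity of $-D$ and of $-K_X$, the class $D':=-K_X-D=(-D)+(-K_X)$ is effective as well, and by Lemma~\ref{lemma_dprime} it is an $r'$-class with $r+r'=d-4$; a direct computation (or Proposition~\ref{prop_classes0}-type bookkeeping) gives $(-D)\cdot D'=D\cdot K_X+D^2=-2<0$, so $-D$ and $D'$ must share an irreducible component, and moreover, since $(-D)\cdot K_X=r+2>0$, some component of $-D$ meets $K_X$ positively. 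To turn this into a contradiction I would pass to a minimal model, proving by induction on $\operatorname{rk}\Pic(X)=10-\deg X$ the slightly more general statement: \emph{if $F$ is any divisor on a rational surface $X$ of positive degree with $F^2\ge -1$ and $F\cdot K_X\le -1$, then $h^0(-F)=0$} (which applies to $F=D$). The base cases $X=\P^2$ and $X=\mathbb F_e$ are immediate from the explicit description of the effective cone: on $\P^2$ the condition $F\cdot K_X\le -1$ forces $F=aL$ with $a\ge 1$; on $\mathbb F_e$ it forces $-F$ to have a negative coordinate in the basis given by the section and the fiber, contradicting effectivity. For the inductive step, blow down a $(-1)$-curve $C$, $\pi\colon X\to Y$, write $D=\pi^*D_0+mC$ with $m=-D\cdot C$, so that $D_0^2=D^2+m^2\ge -1$, $D_0\cdot K_Y=D\cdot K_X+m$, and effectivity of $-D$ descends to effectivity of $-D_0=\pi_*(-D)$; provided $C$ can be chosen with $D\cdot C\ge 1+D\cdot K_X$ (equivalently $m\le -1-D\cdot K_X$), the inductive hypothesis applies to $D_0$ on $Y$ and we are done.

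\emph{Main obstacle.} The crux is exactly the existence, at each stage, of a $(-1)$-curve $C$ with $D\cdot C\ge 1+D\cdot K_X$. When $-K_X$ is nef (in particular for weak del Pezzo surfaces) this is painless, since then $-D$ effective already contradicts $D\cdot(-K_X)=r+2>0$; but in general $X$ may carry irreducible curves of self-intersection $\le -3$ sitting in the (fixed, resp.\ Zariski-negative) part of $|-K_X|$, and these are the ones forced to appear in $-D$ by the computation above, so peeling them off does not preserve the hypothesis $(\ \cdot K)\le -1$. I expect the honest way through this is a numerical argument inside $\Pic(X)\cong\mathbb Z^{1,n}$ (in a geometric basis with $K_X=-3L+\sum E_i$), using $n=9-\deg X\le 8$: effectivity of $-D$ together with nefness of $L$ and of the classes $L-E_i$ bounds the coordinates of $D$, and a Cauchy--Schwarz estimate, combined with the relation $D^2+2=-D\cdot K_X$, produces the required $(-1)$-curve, the single extremal numerical case being excluded by integrality of the coordinates. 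This is where essentially all the work lies.
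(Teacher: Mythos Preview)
Your derivation of (2) from (1) via Lemma~\ref{lemma_Kpositive}, Lemma~\ref{lemma_h0h2} and Riemann--Roch is correct and is exactly the paper's argument.

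For (1), your proposal is genuinely incomplete: you set up an induction on $\rank\Pic(X)$, correctly isolate the requirement that some $(-1)$-curve $C$ satisfy $D\cdot C\ge 1+D\cdot K_X$, and then stop, saying only that ``the honest way through'' ought to be a Cauchy--Schwarz estimate in the lattice $\Z^{1,n}$. That estimate \emph{is} the paper's proof of (1), but it is used directly, not to feed an induction. The paper writes $D=aL+\sum b_iE_i$ in the standard basis of a blow-up of $\P^2$ (so $n\le 8$) and proves the Claim: either $D=E_i$ for some $i$, or $a>0$ and $a+b_i\ge 0$ for all $i$. For $r=-1$ this is read off the explicit list of $(-1)$-classes; for $r\ge 0$ the inequalities $a^2\ge\sum b_i^2$ and $3a+\sum b_i\ge 2$ combine via the AM--QM inequality to give $\sqrt{8}\,|a|+3a\ge 2$, hence $a>0$. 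Either alternative immediately rules out $-D$ effective (its image on $\P^2$ would be $-aL$), so no blow-down step is needed and the question of locating a suitable $(-1)$-curve never arises. There is also a case your sketch does not fully cover: a rational surface of positive degree need not be a blow-up of $\P^2$. The paper handles blow-ups of $\mathbb F_{2m+1}$ by writing down an explicit isometry of Picard lattices to a blow-up of $\P^2$ (sending $K$ to $K$) and transporting the Claim across, and treats even Hirzebruch surfaces by hand. In short, the numerical argument you anticipate replaces your induction rather than repairing it, and it has to be supplemented by this lattice-isometry reduction for the Hirzebruch-based surfaces.
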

\begin{proof}
(1) First we prove the statement for $X$ being a blow-up of $\P^2$ at several (maybe infinitesimal) points. Let $L,E_1,\ldots,E_n$ be the standard basis in $\Pic(X)$. Since $\deg(X)>0$, we have $n\le 8$. Let 
$D=aL+\sum_{i=1}^nb_iE_i$. 

\textbf{Claim.} We have $D=E_i$ for some $i$ or $a>0, a+b_i\ge 0$ for all $i$.

To prove the claim, assume that $r=-1$. Then all $(-1)$-classes in $\Pic(X)$ are listed in \cite[Prop. 26.1]{Ma} and the claim follows. Now assume $r\ge 0$. Then by Proposition~\ref{prop_D2} we have 
\begin{equation}
\label{eq_abbb}
D^2=a^2-\sum b_i^2\ge 0\quad\text{and}\quad D\cdot(-K_X)=3a+\sum b_i\ge 2.
\end{equation}
It follows that (using an inequality between arithmetic and quadratic means)
$$|a|\ge \sqrt{\sum_{i=1}^n b_i^2}\ge \frac{\sum_{i=1}^nb_i}{\sqrt{n}}\ge\frac{2-3a}{\sqrt{n}}\ge \frac{2-3a}{\sqrt{8}}$$
and $$\sqrt{8}|a|+3a\ge 2.$$
Therefore $a>0$. Finally, $a+b_i\ge 0$ is guaranteed by the first inequality in \eqref{eq_abbb}.

The Claim implies $-D$ cannot be effective: otherwise the image $-aL$ of $-D$ on $\P^2$ is effective.

Now we consider the general case: assume $X$ is a blow-up of some Hirzebruch surface $\mathbb F_s$ at several points. Moreover, unless $X=\mathbb F_s$ we can assume that $s=2m+1$ is odd. Let $B,F\subset X$ be the preimages of the $(-s)$-curve and the fiber respectively, we have $B^2=-s, B\cdot F=1, F^2=0$. Denote by 
$B,F,E_1,\ldots,E_n$ the standard basis in $\Pic(X)$. Since $\deg (X)>0$ we have $n\le 7$. Consider a blow-up $Y$ of $\P^2$ at $n+1$ points,  denote by $L,E'_0,E'_1,\ldots,E'_{n}$ the standard basis in $\Pic(Y)$. Define the linear map
$$\phi\colon \Pic(Y)\to \Pic(X)$$
be the rule
$$\phi(L)=B+(m+1)F,\quad \phi(E'_0)=B+mF,\quad \phi(E'_i)=E_i\quad\text{for}\quad i=1,\ldots,n.$$
The reader is welcome to check that $\phi$ is an isometry and maps $K_Y$ to $K_X$. 
Thus $\phi$ identifies $r$-classes on $Y$ and $X$. Let $D':=\phi^{-1}(D)$ be the $r$-class on $Y$, we can write $D'=aL+\sum_{i=0}^n b_iE'_i$. By the above Claim, we have either $D'=E'_i$ for some $i$ or $a>0$ and $a+b_0\ge 0$. Hence, $D=\phi(D')=E_i$, $D=B+mF$ or
$$D=a(B+(m+1)F)+b_0(B+mF)+\sum_{i=1}^nb_iE_i.$$
In the first two cases we readily have $h^0(-D)=0$, in the latter case we have
$$D=(a+b_0)B+(a(m+1)+b_0m)F+\sum_{i=1}^nb_iE_i.$$
The coefficient at $F$ equals to $(a+b_0)m+a$ and is positive by our assumptions.  
The divisor $S=B+sF$ is the class of a positive $s$-curve, hence $S$ is nef. We have 
$S\cdot (-D)=-(a(m+1)+b_0m)<0$, hence  $-D$ is not effective.

It remains to consider the case when $X$ is a Hirzebruch surface $\mathbb F_s$ with even $s$, we leave it to the reader.

(2) By Lemma~\ref{lemma_Kpositive}, $-K_X$ is effective. By part (1) we get $h^0(-D)=0$ and by Lemma~\ref{lemma_h0h2} we get $h^2(D)=0$. Now using Proposition~\ref{prop_D2} we get
$$h^0(D)\ge h^0(D)-h^1(D)=\chi(D)=D^2+2\ge 1.$$
\end{proof}

\begin{lemma}
\label{lemma_irred}
Let $X$ be a weak del Pezzo surface of degree $d$. Then 
a general  anticanonical divisor $Z\in |-K_X|$ is irreducible. 
\end{lemma}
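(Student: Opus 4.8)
Plan for proving Lemma \ref{lemma_irred} (general anticanonical divisor on a weak del Pezzo surface is irreducible).

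First I would recall from Lemma~\ref{lemma_Kpositive} that $-K_X$ is effective, and in fact from the Riemann–Roch computation there, $h^0(-K_X) = K_X^2 + 1 = d+1 \ge 2$, so the anticanonical system $|-K_X|$ is a linear system of positive dimension. The strategy is to show that $|-K_X|$ has no fixed component and is base-point free, and then invoke Bertini's theorem (in characteristic zero) to conclude that a general member is smooth away from base points, hence — being connected — irreducible. Connectedness of a general $Z \in |-K_X|$ follows because $Z$ is an ample-or-nef divisor with $h^1(\O_X) = 0$: from the exact sequence $0 \to \O_X(K_X) \to \O_X \to \O_Z \to 0$ one gets $h^0(\O_Z) = 1$ since $h^0(K_X) = h^0(\O_X(K_X)) = 0$ and $h^1(K_X) = h^1(\O_X)^\vee = 0$ by Serre duality and rationality; a smooth curve with $h^0(\O_Z)=1$ is connected, hence irreducible.

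The key step, then, is base-point freeness of $|-K_X|$. I would argue as follows. Since $-K_X$ is nef with $K_X^2 = d > 0$, it is big and nef. One can either cite the standard fact (see Dolgachev~\cite[Chapter 8]{Do}) that $-K_X$ is base-point free on a weak del Pezzo surface when $d \ge 2$, and handle $d = 1$ separately, or give a direct argument: for $d \ge 3$, $-K_X$ is actually very ample on the associated del Pezzo (the anticanonical model), and pulling back shows $|-K_X|$ is base-point free on $X$ as well; for $d = 2$ the anticanonical map is a degree-two cover of $\P^2$, again base-point free; for $d = 1$ the linear system $|-K_X|$ has a single base point but $|-2K_X|$ is base-point free, and one argues that a general member of the pencil $|-K_X|$ is still irreducible by a local analysis at the base point (the general member is smooth there since the pencil separates tangent directions). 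Alternatively, and more uniformly, I would use that the generic member of $|-K_X|$ is reduced and irreducible because any decomposition $-K_X = C_1 + C_2$ into effective nonzero divisors would force, via nef-ness of $-K_X$ and the genus formula, constraints ($C_i \cdot (-K_X) \ge 0$, $\sum C_i\cdot(-K_X) = d$) that for a \emph{general} member cannot be realized — the fixed part would have to be a fixed curve in every member, contradicting that the moving part has positive self-intersection and positive-dimensional linear system.

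The main obstacle I anticipate is the degree-one case $d = 1$, where $|-K_X|$ genuinely has a base point and Bertini does not apply directly; here one must check by hand that the general member of the anticanonical pencil is irreducible (it is a smooth genus-one curve through the base point). The cleanest route is probably to treat $d \ge 2$ by base-point-freeness plus Bertini plus the connectedness argument above, and to dispatch $d = 1$ by the explicit description of the anticanonical pencil of a degree-one weak del Pezzo surface (it defines an elliptic fibration after blowing up the base point), whose general fibre is an irreducible smooth elliptic curve. I expect the authors' proof to be short and to lean on the cited references \cite{Do} or \cite{De} for the structural facts about $|-K_X|$, so I would keep the write-up correspondingly brief, citing Bertini and the anticanonical system properties rather than reproving them.
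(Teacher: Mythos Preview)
Your proposal is correct and follows essentially the same two-case strategy as the paper: apply a Bertini-type theorem for $d\ge 2$ and treat $d=1$ separately. The technical tools differ slightly, though. For $d\ge 2$ the paper cites only that $|-K_X|$ has \emph{no fixed components} (\cite[Theorem 8.3.2]{Do}) and then invokes Bertini's irreducibility theorem directly, using $\dim|-K_X|=d>1$; your route via base-point freeness, smoothness Bertini, and a separate connectedness argument from $0\to\O_X(K_X)\to\O_X\to\O_Z\to 0$ is valid but a bit more roundabout. For $d=1$ the paper does not pass to the elliptic fibration; instead it pushes the pencil down to $\P^2$ as the pencil of cubics through the eight blown-up points, observes that any reducible member must split as a line plus a conic indexed by a subset $J\subset\{1,\ldots,8\}$, and argues that either such a splitting would produce a fixed component of $|-K_X|$ (contradiction) or there are only finitely many such reducible members. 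Your elliptic-fibration suggestion would also work, but the paper's counting argument is more self-contained and avoids introducing the extra blow-up.
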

\begin{proof}
Recall that by  \cite[Theorem 8.3.2]{Do}, the linear system $|-K_X|$ has no fixed components. If $d>1$ then $\dim|-K_X|=d>1$ and $Z$ is irreducible by Bertini's theorem. For $d=1$ we have
$\dim|-K_X|=1$ and argue as follows. Let $X\xra{f} \P^2$ be the blow-up of  points $P_1,\ldots,P_{8}$. To prove that a general divisor $Z\in |-K_X|=|3L-\sum E_i|$ is irreducible, it suffices to check that 
a general divisor in the linear system $|3L-\sum P_i|$ on $\P^2$ is irreducible. Note that any reducible divisor $Z\in |3L-\sum P_i|$ is a sum of a line $l_Z\in |L-\sum_{j\in J}P_j|$ and a conic $q_Z\in |2L-\sum_{j\in \{1,\ldots,8\}\setminus J}P_j|$ for some subset $J\subset\{1,\ldots,8\}$. If $l_Z$ or $q_Z$ is movable in the corresponding linear system then (as $\dim|-K_X|=1$) we get 
$$|-K_X|=|L-\sum_{j\in J}P_j|\times |2L-\sum_{j\in \{1,\ldots,8\}\setminus J}P_j|.$$
It follows that  $|-K_X|$ has a fixed part, what contradicts to \cite[Theorem 8.3.2]{Do}.
Otherwise $Z$ is uniquely defined by $J$, and there are only finitely many such reducible divisors. So the statement follows.
\end{proof}

\begin{proof}[Proof of Proposition~\ref{prop_loslo}]
First, we find criteria of left-orthogonality. We assume that $X$ is a rational surface with $\deg (X)>0$.
Since $D$ is numerically left-orthogonal, we have $h^0(-D)-h^1(-D)+h^2(-D)=0$. Therefore $D$ is left-orthogonal if and only if $h^0(-D)=h^2(-D)=0$. Suppose $r\ge -1$, then  $h^0(-D)=0$ by Lemma~\ref{lemma_sqrt}(1). Suppose $r\le d-3$, denote $D'=-K_X-D$. Then
by Lemma~\ref{lemma_dprime} $D'$ is an $r'$-class where $r'=d-4-r\ge -1$. By Lemma~\ref{lemma_sqrt}(1) we have $h^0(-D')=0$ and by Serre duality we have 
$h^2(-D)=h^0(K_X+D)=h^0(-D')=0$.
Now the statement follows.

Now we prove criteria of strong left-orthogonality. As before, $X$ is any rational surface with $\deg X>0$.
For a strong left-orthogonal divisor $D$ one has $D^2=\chi(D)-2=h^0(D)-2\ge -2$, therefore divisors with $r\le -3$ are not strong left-orthogonal. 

Let $r=-2$. Suppose $D$ is left-orthogonal, then $h^2(D)=0$ by Lemmas~\ref{lemma_Kpositive} and \ref{lemma_h0h2}. Since $D$ is numerically left-orthogonal and $\chi(D)=D^2+2=0$, $D$ is strong left-orthogonal iff $h^0(D)=0$. The statement for $r=-2$ follows. 

From now on we assume that $X$ is a weak del Pezzo surface.
It remains to demonstrate that a left-orthogonal divisor $D$ with $D^2\ge -1$ is strong left-orthogonal. We know by Lemma~\ref{lemma_h0h2} that $h^2(D)=0$.

By Lemma~\ref{lemma_irred}, we can choose an irreducible divisor $Z\in |-K_X|$.
Consider the standard exact sequence
$$0\to \O_X(K_X)\to \O_X\to \O_Z\to 0.$$
Twisting it by $\O_X(-D)$, we get the exact sequence
\begin{equation}
\label{eq_loslo}
0\to \O_X(K_X-D)\to \O_X(-D)\to \O_Z(-D)\to 0.
\end{equation}
The long exact sequence of cohomology associated with \eqref{eq_loslo} yields that $h^1(\O_X(K_X-D))=h^0(\O_Z(-D))$. Since $(-D)\cdot Z=D\cdot K_X=-2-D^2<0$ and $Z$ is irreducible, we have $h^0(\O_Z(-D))=0$ and by Serre duality $h^1(D)=h^1(K_X-D)=0$.
\end{proof}

The following consequences and variations of Proposition~\ref{prop_loslo} will be needed below.
\begin{lemma}
\label{lemma_effslo}
(1) Let $X$ be a rational surface with $-K_X$ effective. Then
$$R(X)=R^{\rm eff}(X)\sqcup(-R^{\rm eff}(X))\sqcup R^{\rm slo}(X).$$
(2) Let $X$ be a rational surface with $\deg (X)>0$. Then 
$$R^{\rm lo}(X)=R^{\rm slo}(X)\sqcup R^{\rm eff}(X).$$
\end{lemma}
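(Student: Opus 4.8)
\textbf{Proof proposal for Lemma~\ref{lemma_effslo}.}

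The plan is to deduce both statements from Proposition~\ref{prop_loslo} (the $r=-2$ line of Table~\ref{table_loslo}) together with the elementary observation that an effective $(-2)$-class and its negative cannot both be effective on a surface of positive degree. Recall that for $D\in R(X)$ we have $D^2=-2$, $D\cdot K_X=0$, and $\chi(D)=\chi(-D)=0$ by Proposition~\ref{prop_D2}; so $D$ is automatically numerically left-orthogonal, and $-D$ is again a $(-2)$-class.

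For part~(1), I would first argue that the three sets on the right are pairwise disjoint. If $D\in R^{\rm eff}(X)$ and $-D\in R^{\rm eff}(X)$, then $h^0(D),h^0(-D)>0$; picking an effective representative $E$ of $D$ and $E'$ of $-D$, we get $E+E'\sim 0$ with both effective, forcing $E=E'=0$, contradicting $D^2=-2$. Hence $R^{\rm eff}(X)\cap(-R^{\rm eff}(X))=\emptyset$. By the $r=-2$ criterion of Proposition~\ref{prop_loslo}, $D\in R^{\rm slo}(X)$ iff $h^0(D)=h^0(-D)=0$, i.e. iff neither $D$ nor $-D$ is effective; this immediately shows $R^{\rm slo}(X)$ is disjoint from $R^{\rm eff}(X)\sqcup(-R^{\rm eff}(X))$. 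For the covering: given any $D\in R(X)$, if $h^0(D)>0$ then $D\in R^{\rm eff}(X)$; if $h^0(-D)>0$ then $D\in -R^{\rm eff}(X)$; and if $h^0(D)=h^0(-D)=0$ then $D\in R^{\rm slo}(X)$ by the same criterion. So every element of $R(X)$ lands in (exactly) one of the three sets. This is where $-K_X$ effective is used implicitly: Proposition~\ref{prop_loslo}'s $r=-2$ row (via Lemmas~\ref{lemma_Kpositive}, \ref{lemma_h0h2}) needs it to guarantee $h^2(D)=0$ so that strong left-orthogonality reduces to the stated $h^0$-vanishing.

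For part~(2), by definition $R^{\rm slo}(X)\subseteq R^{\rm lo}(X)\subseteq R(X)$, and by the $r=-2$ left-orthogonality criterion of Proposition~\ref{prop_loslo}, $D\in R^{\rm lo}(X)$ iff $h^0(-D)=0$. Split $R^{\rm lo}(X)$ according to whether $h^0(D)=0$ or $h^0(D)>0$: in the first case $h^0(D)=h^0(-D)=0$ so $D\in R^{\rm slo}(X)$; in the second case $D\in R^{\rm eff}(X)$, and since $h^0(-D)=0$ this $D$ is not in $-R^{\rm eff}(X)$, and the two subcases are disjoint because $R^{\rm slo}(X)\cap R^{\rm eff}(X)=\emptyset$ (an effective class has $h^0(D)>0$). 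Hence $R^{\rm lo}(X)=R^{\rm slo}(X)\sqcup R^{\rm eff}(X)$. There is no real obstacle here — everything is a bookkeeping exercise once Proposition~\ref{prop_loslo} is in hand; the only point requiring a (trivial) argument is the impossibility of $D$ and $-D$ both being effective, which I would dispatch as above via $h^0(D)\cdot h^0(-D)\neq 0 \Rightarrow 0\sim E+E'$ with $E,E'$ effective.
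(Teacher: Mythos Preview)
Your approach is essentially the same as the paper's: characterize $R^{\rm slo}$ via the condition $h^0(D)=h^0(-D)=0$ and then split $R(X)$ (resp.\ $R^{\rm lo}(X)$) by the sign of $h^0(\pm D)$. Two small points are worth tightening. First, in part~(1) you invoke Proposition~\ref{prop_loslo}, but that proposition is stated only for $\deg(X)>0$, which is strictly stronger than ``$-K_X$ effective''; the paper therefore bypasses Proposition~\ref{prop_loslo} and appeals directly to Lemma~\ref{lemma_h0h2} (applied to $D$ and $-D$) to get $h^2(D)=h^2(-D)=0$, whence $h^0(D)=h^0(-D)=0$ is equivalent to $D\in R^{\rm slo}(X)$. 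You clearly see this (you name Lemma~\ref{lemma_h0h2} as the real input), so just cite it directly rather than routing through Proposition~\ref{prop_loslo}. Second, in part~(2) you show $R^{\rm lo}(X)\subseteq R^{\rm slo}(X)\sqcup R^{\rm eff}(X)$ and $R^{\rm slo}(X)\subseteq R^{\rm lo}(X)$, but you do not quite close the loop with $R^{\rm eff}(X)\subseteq R^{\rm lo}(X)$; this is immediate from your own disjointness argument ($D$ effective $\Rightarrow h^0(-D)=0$) together with the $r=-2$ left-orthogonality criterion, so just add the sentence.
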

\begin{proof}
(1) Note that $D\in R(X)\Longleftrightarrow -D\in R(X)$, also note that  $\chi(D)=\chi(-D)=0$ for any $(-2)$-class $D$. Hence $D$ is strong left-orthogonal iff $h^i(D)=h^i(-D)=0$ for $i=0,2$. 
Now assume $D\in R(X)$ and  $h^0(D)=h^0(-D)=0$. By Lemma~\ref{lemma_h0h2} applied to $D$ and $-D$ we get that $h^2(D)=h^2(-D)=0$. It follows that 
\begin{align*}
R^{\rm eff}(X) &= \{D\in R(X)\mid h^0(D)>0\},\\
-R^{\rm eff}(X) &= \{D\in R(X)\mid h^0(-D)>0\},\\
R^{\rm slo}(X) &= \{D\in R(X)\mid h^0(D)=h^0(-D)=0\},\\
\end{align*}
what concludes the statement.

(2) Follows from Lemma~\ref{lemma_Kpositive},  Proposition~\ref{prop_loslo} and part (1).
\end{proof}

\begin{corollary}
\label{cor_minustwoondP}
Any $(-2)$-class on a del Pezzo surface is strong left-orthogonal.
\end{corollary}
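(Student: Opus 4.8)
The plan is to reduce to the $r=-2$ row of Table~\ref{table_loslo} via Proposition~\ref{prop_loslo}, which says that a $(-2)$-class $D$ on a weak del Pezzo surface is strong left-orthogonal if and only if $h^0(D)=h^0(-D)=0$. A del Pezzo surface is in particular a weak del Pezzo surface, so it suffices to prove the vanishing $h^0(D)=h^0(-D)=0$ for every $(-2)$-class $D$.

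Since $X$ is a del Pezzo surface, it is rational, hence $\chi(\O_X)=1$, and Proposition~\ref{prop_D2} applies: a $(-2)$-class satisfies $D^2=-2$ and $D\cdot K_X=0$. Now suppose for contradiction that $D$ is effective; since $D^2=-2<0$, $D$ is in particular a nonzero effective divisor. By definition of a del Pezzo surface $-K_X$ is ample, so intersecting the nonzero effective class $D$ with the ample class $-K_X$ gives $D\cdot(-K_X)>0$, contradicting $D\cdot K_X=0$. Hence $h^0(D)=0$. Applying the same argument to $-D$, which is again a $(-2)$-class (its square is $-2$ and $(-D)\cdot K_X=0$), we get $h^0(-D)=0$.

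Therefore, by the $r=-2$ case of Proposition~\ref{prop_loslo}, $D$ is strong left-orthogonal, which is exactly the claim. There is no real obstacle here: the only subtlety worth flagging is that one must invoke the ampleness of $-K_X$ (not just nef-ness, which would only give $D\cdot(-K_X)\ge 0$ and would fail to yield a contradiction — indeed that is precisely why $(-2)$-curves, and hence effective $(-2)$-classes, can occur on non-del-Pezzo weak del Pezzo surfaces). Alternatively, one could phrase the same argument as: by Lemma~\ref{lemma_effslo}(1), $R(X)=R^{\rm eff}(X)\sqcup(-R^{\rm eff}(X))\sqcup R^{\rm slo}(X)$, and the ampleness of $-K_X$ forces $R^{\rm eff}(X)=\emptyset$, so $R(X)=R^{\rm slo}(X)$.
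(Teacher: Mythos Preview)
Your proof is correct and follows the same overall strategy as the paper: show that no $(-2)$-class on a del Pezzo surface is effective, then invoke the $r=-2$ criterion (equivalently, Lemma~\ref{lemma_effslo}(1)). The only difference is in how the non-effectivity is established: the paper appeals to Proposition~\ref{prop_roots} (every effective $(-2)$-class is a sum of $(-2)$-curves) together with Lemma~\ref{lemma_negcurve} (no $(-2)$-curves on a del Pezzo), whereas you argue directly from ampleness of $-K_X$ that a nonzero effective $D$ must satisfy $D\cdot(-K_X)>0$, contradicting $D\cdot K_X=0$. Your route is shorter and avoids the structural input of Proposition~\ref{prop_roots}; the paper's route has the mild advantage of making explicit the link to the root-system picture used elsewhere.
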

\begin{proof}
Use Lemma~\ref{lemma_effslo}(1). Indeed, any effective $(-2)$-class is a sum of classes of $(-2)$-curves by Proposition~\ref{prop_roots} and there are no  such curves on a del Pezzo surface by Lemma~\ref{lemma_negcurve}.
\end{proof}

We finish this section with an example demonstrating that (some part of) Proposition~\ref{prop_loslo} fails if~$X$ is only assumed to be of positive degree.
\begin{example}
Let $X=\mathbb F_3$ be a Hirzebruch surface with a $(-3)$-curve $B$ and a fiber $F$. We have $K_X^2=8>0$ but $X$ is not a weak del Pezzo surface. Consider the divisor $D=B+F$ on $X$, it is a $(-1)$-class. Then $D$ is left-orthogonal by Proposition~\ref{prop_loslo}, but not strong left-orthogonal: $\chi(D)=1$ while $h^0(D)=h^0(F)=2$. 
\end{example}

\section{Toric systems and admissible sequences}
\label{section_prelim2}

\subsection{Exceptional objects}

We start with recalling some concepts related with exceptional collections in triangulated categories. An object $\EE$  in the derived category $\D^b(\coh(X))$ of coherent sheaves on $X$ is said to be \emph{exceptional} if $\Hom^i(\EE,\EE)=0$ for
 all $i\ne 0$ and  $\Hom(\EE,\EE)=\k$. An ordered collection $(\EE_1,\ldots,\EE_n)$ of exceptional objects is said to be \emph{exceptional} if $\Hom^i(\EE_l,\EE_k)=0$ for
 all $i$ and $k<l$. An exceptional collection $(\EE_1,\ldots,\EE_n)$  is said to be \emph{strong exceptional} if $\Hom^i(\EE_l,\EE_k)=0$ for
 all $i\ne 0$ and all $k,l$. 
 An ordered collection $(\EE_1,\ldots,\EE_n)$ of objects in $\D^b(\coh(X))$ is said to be \emph{numerically exceptional} if for the Euler form $\chi(\EE,\FF):=\sum_i (-1)^i \dim\Hom^i(\EE,\FF)$ one has: 
 $$\chi(\EE_k,\EE_k)=1, \quad \chi(\EE_l,\EE_k)=0\quad\text{for }\quad k<l.$$
 A collection $(\EE_1,\ldots,\EE_n)$ is said to be \emph{full} if objects  $\EE_1,\ldots,\EE_n$ generate $\D^b(\coh X)$ as triangulated category. A collection $(\EE_1,\ldots,\EE_n)$ is said to be \emph{of maximal length } if the classes of  objects  $\EE_1,\ldots,\EE_n$ generate $K_0(X)_{num}$, the Grothendieck group of $X$ modulo numeric equivalence. Recall that for a rational surface $X$ one has
$$\rank K_0(X)=\rank\Pic(X)+2=12-\deg(X).$$

Any line bundle on a rational surface is exceptional. Therefore an ordered collection of line bundles $(\O_X(D_1),\ldots,\O_X(D_n))$ on a rational surface $X$ is exceptional (resp. strong exceptional) if and only if the divisor $D_l-D_k$ is left-orthogonal (resp. strong left-orthogonal) for all $k<l$.

Clearly, any full exceptional collection has maximal length. In general, the converse in not true: there are examples of surfaces of general type (classical Godeaux surface, \cite{BBS} or Barlow surface, \cite{BBKS}) possessing an exceptional collection of maximal length which is not full. But for rational surfaces there are no such examples known. For weak del Pezzo surfaces of degree $\ge 2$, it follows from a result by Sergey Kuleshov \cite[Theorem 3.1.8]{Ku} that any exceptional collection of vector bundles of maximal length is full.

\subsection{Toric systems}
Next we recall the important notion of a \emph{toric system}, introduced by Hille and Perling in \cite{HP}.

For a sequence $(\O_X(D_1),\ldots,\O_X(D_n))$ of line bundles one can consider 
the infinite sequence (called a \emph{helix}) $(\O_X(D_i)), i\in \Z$, defined by the rule
$D_{k+n}=D_k-K_X$. From Serre duality it follows that the collection $(\O_X(D_1),\ldots,\O_X(D_n))$ is exceptional (resp. numerically exceptional) if and only if any collection of the form $(\O_X(D_{k+1}),\ldots,\O_X(D_{k+n}))$ is exceptional (resp. numerically exceptional). One can consider the $n$-periodic sequence $$A_k=D_{k+1}-D_k$$ 
of divisors on $X$. Following Hille and Perling, we will consider the finite sequence $(A_1,\ldots,A_n)$ with the cyclic order and will treat the index $k$ in $A_k$ as a residue modulo $n$.  Vice versa, for any sequence $(A_1,\ldots,A_n)$ one can construct the infinite sequence $(\O_X(D_i)), i\in \Z$, with the property $D_{k+1}-D_k=A_{k\mod n}$. 
Then the corresponding collection of line  bundles would be
\begin{multline}
\label{eq_DDD}
(\O_X(D_1),\O_X(D_2),\ldots,\O_X(D_n))=\\
=(\O_X(D_1),\O_X(D_1+A_1),\O_X(D_1+A_1+A_2),\ldots,\O_X(D_1+A_1+\ldots+A_{n-1})).
\end{multline}

\begin{definition}[See {\cite[Definitions 3.4 and 2.6]{HP}}]
\label{def_ts}
Let $X$ be a smooth projective surface. 
A sequence $(A_1,\ldots,A_n)$ in $\Pic(X)$ is called a \emph{toric system} if $n=\rank K_0(X)_{num}$ and the following conditions are satisfied (where indices are treated modulo $n$):
\begin{enumerate}
\item $A_iA_{i+1}=1$;
\item $A_iA_{j}=0$ if $j\ne i,i\pm 1$;
\item $A_1+\ldots+A_n=-K_X$.
\end{enumerate}
\end{definition}

Note that a cyclic shift $(A_k,A_{k+1},\ldots,A_n,A_1,\ldots,A_{k-1})$ of a toric system $(A_1,\ldots,A_n)$  is also a toric system. Also, note that by our definition any toric system has maximal length.

\begin{example}[See {\cite[Section 3.1]{HI}}]
Let $Y$ be a smooth projective toric surface. Its torus-invariant prime divisors form a cycle, denote them $T_1,\ldots,T_n$ in the cyclic order. Then $(T_1,\ldots,T_n)$ is a toric system on~$Y$. 
\end{example}

The following is proved in \cite[Lemma 3.3]{HP}, see also \cite[Propositions 2.8 and 2.15]{EL}.
\begin{predl}
\label{prop_tsne}
Let $X$ be a surface with $\chi(\O_X)=1$. Then a sequence $(A_1,\ldots,A_n)$ in $\Pic(X)$ is a toric system if and only if the corresponding collection \eqref{eq_DDD} is numerically exceptional of maximal length for any $D_1$. In particular, if $A$ is a toric system then $A_i^2+2=-A_i\cdot K_X$ for any $i$.
\end{predl}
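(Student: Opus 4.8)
The plan is to prove the equivalence by translating the toric system axioms into conditions on the associated numerically exceptional collection, using the Riemann--Roch machinery already developed. First I would recall that, given a sequence $(A_1,\ldots,A_n)$ in $\Pic(X)$, formula~\eqref{eq_DDD} produces a collection $(\O_X(D_1),\ldots,\O_X(D_n))$ and its helix continuation, and that this collection is numerically exceptional of maximal length iff $\chi(\O_X(D_k-D_l))=0$ for all $k>l$ (together with $\chi(\O_X)=1$, automatic here) and the classes $[\O_X(D_i)]$ span $K_0(X)_{num}$, which because line bundles have numerical class determined by their divisor amounts to $A_1,\ldots,A_n$ generating $\Pic(X)$ up to finite index in the appropriate sense; since $n=\rank K_0(X)_{num}=\rank\Pic(X)+2$ matches by hypothesis in both directions, the length condition is a count-plus-spanning statement that I would handle by a short linear-algebra remark.

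The core of the argument is the correspondence between the vanishing $\chi(\O_X(D_k-D_l))=0$ and the toric system relations. The key observation is that $D_k-D_l = A_l + A_{l+1} + \cdots + A_{k-1}$ for $l < k$, and more generally for any cyclic segment the difference of the two $D$'s is the sum of the intervening $A_i$'s (with a $-K_X$ shift across the period, using $\sum A_i = -K_X$). So I would proceed by induction on the length of the segment $k-l$. For a segment of length one, $D_{i+1}-D_i = A_i$, and numerical left-orthogonality of $-A_i$, i.e. $\chi(\O_X(A_i))=\chi(\O_X(-A_i))$... more precisely $\chi(-(-A_i))=\chi(A_i)$ needs care: the collection being numerically exceptional requires $\chi(\O_X(D_k), \O_X(D_l))=\chi(\O_X(D_l-D_k))=0$ for $k<l$, so for adjacent terms this is $\chi(-A_i)=0$, i.e. each $A_i$ is numerically left-orthogonal, which by Proposition~\ref{prop_D2} gives $A_i^2+2=-A_i\cdot K_X$ — this yields the last assertion of the Proposition. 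Then for a segment of length two, $\chi(-(A_i+A_{i+1}))=0$ combined with $A_i,A_{i+1}$ numerically left-orthogonal forces $A_i\cdot A_{i+1}=1$ by Proposition~\ref{prop_classes0}. For longer segments, I would show inductively using Proposition~\ref{prop_classes0} repeatedly that $\chi$ of the partial sum stays zero exactly when the new $A$ meets the running sum in degree $1$; since the running sum $A_l+\cdots+A_{k-2}$ already has nonnegative intersection behaviour, and $A_{k-1}$ must meet it in $1$, while $A_{k-1}\cdot A_j$ for the interior $j$ is forced to vanish — this is where axiom (2) comes out. The cyclic/period wrap-around is handled by the relation $D_{i+n}-D_i=-K_X$ and $\sum A_i=-K_X$, which makes the three axioms symmetric under cyclic shift, consistent with the cyclic indexing.

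Conversely, assuming $(A_1,\ldots,A_n)$ satisfies axioms (1)--(3), I would show every cyclic partial sum $A_l+\cdots+A_{k-1}$ is numerically left-orthogonal: axiom (1), (2) give that consecutive ones meet in $1$ and non-consecutive ones are orthogonal, so by repeated application of Proposition~\ref{prop_classes0} (each $A_i$ being numerically left-orthogonal since $A_i^2+2 = A_i\cdot A_{i-1}+A_i\cdot A_{i+1} - A_i\cdot(A_i - A_i)$... I would instead derive $A_i^2+2=-A_i\cdot K_X$ directly from axioms (1)--(3) by dotting $\sum_j A_j=-K_X$ with $A_i$), the telescoping sum of any cyclic block is again numerically left-orthogonal, hence $\chi$ of its negative is $0$; this gives numerical exceptionality of~\eqref{eq_DDD}, and maximal length follows since the $A_i$ generate $\Pic(X)$ (an $n$-term toric system on a surface with $n = \rank\Pic(X)+2$ generates, a standard fact I would cite from \cite{HP} or reprove in two lines from the intersection matrix being the $\widetilde{A}_{n-1}$ affine Cartan-type matrix of corank two).

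The main obstacle I anticipate is the bookkeeping in the inductive step for longer segments: verifying that $\chi$ of a partial sum of $A_i$'s vanishes forces simultaneously that the next generator meets the running sum in exactly $1$ \emph{and} is orthogonal to all but the last interior term, i.e. disentangling axiom (2) from the Euler-characteristic vanishing. Proposition~\ref{prop_classes0} only directly gives the $D_1D_2=1$ criterion for a sum of two numerically left-orthogonal divisors, so I would need to check that at each stage the running partial sum is itself numerically left-orthogonal (which the induction hypothesis supplies) and that the new term $A_{k-1}$ is numerically left-orthogonal (from the length-one case), and then the additivity $(D_1+D_2)^2=D_1^2+D_2^2+2$ together with a dimension/rank count pins down the remaining intersection numbers. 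This is essentially the content of \cite[Lemma 3.3]{HP} and \cite[Propositions 2.8 and 2.15]{EL}, so I would organize the write-up to mirror those references while keeping the cyclic symmetry explicit throughout.
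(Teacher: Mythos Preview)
The paper does not give its own proof here; it simply cites \cite[Lemma 3.3]{HP} and \cite[Propositions 2.8 and 2.15]{EL}, and your sketch is precisely an unpacking of those arguments via Propositions~\ref{prop_D2} and~\ref{prop_classes0}. One point worth making explicit in your write-up: since the collection~\eqref{eq_DDD} depends only on $A_1,\ldots,A_{n-1}$, its numerical exceptionality alone cannot constrain $A_n$ or force axiom~(3) (e.g.\ on $\P^2$ the sequence $(L,L,2L)$ yields the same collection as $(L,L,L)$); the intended reading --- and what the cited references actually prove --- is that the $A_i$ come from the helix rule $D_{k+n}=D_k-K_X$ set up in the paragraph preceding~\eqref{eq_DDD}, so that $\sum A_i=-K_X$ holds by construction and only axioms~(1) and~(2) remain to be matched with the $\chi$-vanishings, which your induction on segment length (and difference-over-endpoints extraction of axiom~(2)) handles correctly.
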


\begin{definition}
\label{def_exts}
Let $X$ be a surface such that the sheaf $\O_X$ is exceptional (that is, $H^0(\O_X)=\k$, $H^1(\O_X)=H^2(\O_X)=0$).
A toric system $(A_1,\ldots,A_n)$ on $X$ is called \emph{exceptional} (resp. \emph{strong exceptional}) if the corresponding  collection $(\O_X(D_1),\ldots,\O_X(D_n))$ is  exceptional (resp. strong exceptional). Note that exceptional toric systems are stable under cyclic shifts while strong exceptional toric systems are not in general.

A toric system $(A_1,\ldots,A_n)$ on $X$ is called \emph{cyclic strong exceptional} if the corresponding collection 
$$(\O_X(D_{k+1}),\ldots,\O_X(D_{k+n}))$$ 
is strong exceptional for any $k\in \Z$. Equivalently: if all cyclic shifts $$(A_k,A_{k+1},\ldots,A_n,A_1,\ldots,A_{k-1})$$
are strong exceptional.
\end{definition}

\begin{definition}
\label{def_kl}
We will use the following notation: let $k,l\in [1,\ldots,n]$ and $k\not\equiv l+1\pmod{n}$. By a \emph{cyclic segment} $[k,\ldots,l]\subsetneq[1,\ldots,n]$
we mean the following set of indices:
\begin{equation*}
[k,\ldots,l]:=
\begin{cases}[k,k+1,\ldots,l-1,l], & \text{if}\quad k\le l,\\
[k,k+1,\ldots,n,1,2,\ldots,l], & \text{if}\quad k>l. 
\end{cases}
\end{equation*}
Note that $[1,\ldots,n]$ is not a cyclic segment of $[1,\ldots,n]$.
\end{definition}

For a toric system $(A_1,\ldots,A_n)$ and $k,l \in [1,\ldots,n]$ denote 
$$A_{k\ldots l}:=\sum_{i\in [k,\ldots, l]}A_i. $$

\begin{predl}
\label{prop_Akl2}
Let $X$ be a surface with $\chi(\O_X)=1$.
For any toric system $A$ on $X$, the divisor
$A_{k\ldots l}$ is numerically left-orthogonal with 
\begin{equation*}
A_{k\ldots l}^2+2=\sum_{i\in [k,\ldots, l]}(A_i^2+2).
\end{equation*}
\end{predl}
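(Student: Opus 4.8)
The plan is to prove the statement by induction on the length $l-k$ of the cyclic segment, using Proposition~\ref{prop_classes0} as the engine for the inductive step. The base case is a single index: for a cyclic segment consisting of just one element $\{i\}$, the divisor $A_{i\ldots i}=A_i$ is numerically left-orthogonal by Proposition~\ref{prop_tsne} (indeed $A_i^2+2=-A_i\cdot K_X$ holds for every $i$ in a toric system), so the formula reads $A_i^2+2=A_i^2+2$, which is trivially true.

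For the inductive step, fix a cyclic segment $[k,\ldots,l]$ with at least two elements and write $A_{k\ldots l}=A_{k\ldots (l-1)}+A_l$ (or alternatively peel off the first term $A_k$; I would peel off the last). By the inductive hypothesis the shorter-segment divisor $D_1:=A_{k\ldots(l-1)}$ is numerically left-orthogonal, and $D_2:=A_l$ is numerically left-orthogonal by the base case. To apply Proposition~\ref{prop_classes0} I need to verify $D_1\cdot D_2=1$. Here I would invoke the defining relations of a toric system from Definition~\ref{def_ts}: since $[k,\ldots,l-1]$ is a cyclic segment not containing $l$ and $l-1\in[k,\ldots,l-1]$, among the indices $i\in[k,\ldots,l-1]$ exactly one — namely $i=l-1$ — is adjacent (mod $n$) to $l$, and none equals $l$ (because $[k,\ldots,l]\subsetneq[1,\ldots,n]$ is a proper cyclic segment, so it cannot wrap all the way around and hit $l$ from the other side). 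Hence $A_{k\ldots(l-1)}\cdot A_l=\sum_{i\in[k,\ldots,l-1]}A_i\cdot A_l=A_{l-1}\cdot A_l=1$. Then Proposition~\ref{prop_classes0} gives at once that $D_1+D_2=A_{k\ldots l}$ is numerically left-orthogonal and that $(D_1+D_2)^2=D_1^2+D_2^2+2$, equivalently $(D_1+D_2)^2+2=(D_1^2+2)+(D_2^2+2)$. Combining with the inductive hypothesis $D_1^2+2=\sum_{i\in[k,\ldots,l-1]}(A_i^2+2)$ yields the claimed identity.

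The main obstacle — really the only subtle point — is the combinatorial bookkeeping needed to confirm $D_1\cdot D_2=1$, i.e.\ making sure that the adjacency count is correct for an arbitrary proper cyclic segment, including the wrap-around case $k>l$. The key facts to check carefully are: (i) $l$ itself does not lie in $[k,\ldots,l-1]$, which requires that $[k,\ldots,l]$ be a \emph{proper} segment so that it does not coincide with all of $[1,\ldots,n]$ and thus $l-1\not\equiv l$; and (ii) the only index of $[k,\ldots,l-1]$ adjacent mod $n$ to $l$ is $l-1$ — the other potential neighbor $l+1$ of $l$ is not in $[k,\ldots,l-1]$, again precisely because the segment is proper. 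Once these are nailed down, the orthogonality relations $A_iA_{i+1}=1$ and $A_iA_j=0$ for non-adjacent $i,j$ do the rest, and the whole argument is just repeated application of Proposition~\ref{prop_classes0}.
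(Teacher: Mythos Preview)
Your proposal is correct and follows essentially the same approach as the paper, which simply states that the result follows from Propositions~\ref{prop_classes0} and~\ref{prop_tsne}. You have carefully written out the induction that the paper leaves implicit, including the verification that $A_{k\ldots(l-1)}\cdot A_l=1$ from the toric system axioms; this is exactly the intended argument.
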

\begin{proof}
Follows from Propositions~\ref{prop_classes0} and~\ref{prop_tsne}. 
\end{proof}

\begin{corollary}
\label{cor_minustwo}
Let $X$ be a surface with $\chi(\O_X)=1$ and $A$ be a toric system on $X$.
If one has $A_i^2\ge -2$ for all $i$, then one has $A_{k\ldots l}^2\ge -2$ for any cyclic segment $[k,\ldots,l]\subset  [1,\ldots, n]$. 
\end{corollary}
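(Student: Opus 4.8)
The plan is to reduce immediately to Proposition~\ref{prop_Akl2}, which is the substantive input here. That proposition tells us that for any cyclic segment $[k,\ldots,l]\subset[1,\ldots,n]$ one has the additivity formula
$$A_{k\ldots l}^2+2=\sum_{i\in[k,\ldots,l]}(A_i^2+2).$$
So the only thing left is a positivity observation: under the hypothesis $A_i^2\ge -2$ for every $i$, each summand $A_i^2+2$ on the right-hand side is nonnegative.

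Concretely, I would argue as follows. Fix a cyclic segment $[k,\ldots,l]$; by Definition~\ref{def_kl} this is a nonempty finite set of indices. Apply Proposition~\ref{prop_Akl2} to get the displayed formula. Since $A_i^2\ge -2$ for all $i$ by assumption, we have $A_i^2+2\ge 0$ for every $i\in[k,\ldots,l]$, hence the sum is $\ge 0$ (it is a sum of nonnegative terms, and it has at least one term). Therefore $A_{k\ldots l}^2+2\ge 0$, i.e.\ $A_{k\ldots l}^2\ge -2$, as claimed. \qed

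There is no real obstacle here: the content is entirely in Proposition~\ref{prop_Akl2} (which in turn rests on Propositions~\ref{prop_classes0} and~\ref{prop_tsne}), and the corollary is just the remark that a sum of terms each $\ge 0$ is $\ge 0$. One should only be mildly careful that the cyclic segment is nonempty so that the empty-sum degenerate case does not arise, but Definition~\ref{def_kl} guarantees $k\in[k,\ldots,l]$, so this is automatic.
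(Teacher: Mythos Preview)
Your proposal is correct and is exactly the intended argument: the paper states this corollary immediately after Proposition~\ref{prop_Akl2} without a separate proof, and the deduction is precisely the nonnegativity observation you give. There is nothing to add.
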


\begin{predl}
\label{prop_cyclic1type}
Let $X$ be a surface such that the sheaf $\O_X$ is exceptional. 
Then for any strong exceptional toric system $(A_1,\ldots,A_{n})$ on $X$ we have $A_i^2\ge -2$ for all $i=1,\ldots,n-1$ and
for any cyclic strong exceptional toric system $(A_1,\ldots,A_{n})$ on $X$ we have $A_i^2\ge -2$ for all $i=1,\ldots,n$.
\end{predl}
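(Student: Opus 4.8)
The plan is to reduce the statement about all cyclic shifts to a statement about a single strong exceptional toric system, and then to extract the inequality $A_i^2 \ge -2$ from strong left-orthogonality of a suitable difference of divisors. First I would observe that for a strong exceptional toric system $(A_1,\ldots,A_n)$, each individual $A_i$ with $i \le n-1$ is, up to sign, the difference $D_{i+1}-D_i$ of two consecutive members of the collection $(\O_X(D_1),\ldots,\O_X(D_n))$, hence $D_{i+1}-D_i = A_i$ is \emph{strong left-orthogonal} (since $i < i+1$). Now recall from Proposition~\ref{prop_loslo} and Proposition~\ref{prop_D2}, or directly from the argument in the proof of Proposition~\ref{prop_loslo}: a strong left-orthogonal divisor $D$ satisfies $h^i(-D)=0$ for all $i$ and $h^i(D)=0$ for $i \ne 0$, so $\chi(D) = h^0(D) \ge 0$; but since $A_i$ is numerically left-orthogonal (Proposition~\ref{prop_tsne}), we have $\chi(A_i) = A_i^2 + 2$ by Proposition~\ref{prop_D2}. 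Combining, $A_i^2 + 2 = h^0(A_i) \ge 0$, i.e.\ $A_i^2 \ge -2$. This settles the first assertion for $i = 1,\ldots,n-1$.

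For the cyclic strong exceptional case, the point is that the index $i=n$ is no longer special: by Definition~\ref{def_exts}, every cyclic shift $(A_k,A_{k+1},\ldots,A_n,A_1,\ldots,A_{k-1})$ is itself a strong exceptional toric system. Applying the previous paragraph to the cyclic shift starting at $A_2$ (so that $A_n$ occupies position $n-1$ in that shifted system, and is thus covered by the "$i \le n-1$" case), we get $A_n^2 \ge -2$ as well. Hence $A_i^2 \ge -2$ for all $i = 1,\ldots,n$.

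The only mildly delicate point — and the step I would be most careful about — is making sure the cohomological input is actually available in the stated generality, namely that "strong left-orthogonal" forces $\chi(D) = h^0(D) \ge 0$. This is immediate from the definition of strong left-orthogonality ($h^i(D) = 0$ for $i \ne 0$ gives $\chi(D) = h^0(D) \ge 0$), so no weak del Pezzo hypothesis or the finer content of Proposition~\ref{prop_loslo} is needed here; only $\chi(\O_X)=1$ (to invoke Proposition~\ref{prop_D2}, which holds once $\O_X$ is exceptional) and the numerical left-orthogonality of each $A_i$ (Proposition~\ref{prop_tsne}). So the proof is essentially a bookkeeping argument: translate "strong exceptional" into "$A_i$ strong left-orthogonal for $i<n$", translate "cyclic" into "may assume any fixed index sits in position $<n$", and then the Riemann--Roch identity $\chi(A_i)=A_i^2+2$ does the rest. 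I do not anticipate any genuine obstacle.
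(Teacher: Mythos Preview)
Your proposal is correct and follows essentially the same approach as the paper's proof: the paper simply notes that the relevant $A_i$ is strong left-orthogonal and then applies Proposition~\ref{prop_D2} to get $A_i^2=\chi(A_i)-2=h^0(A_i)-2\ge -2$. Your write-up spells out explicitly why $A_i$ is strong left-orthogonal (as the difference $D_{i+1}-D_i$) and why the cyclic case picks up $A_n$, but the substance is identical.
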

\begin{proof}
The divisor $A_i$ under consideration is strong left-orthogonal. Hence by Proposition~\ref{prop_D2} $A_i^2=\chi(A_i)-2=h^0(A_i)-2\ge -2$.
\end{proof}

The next proposition is a straightforward consequence of definitions.
\begin{predl}
\label{prop_straightforward}
Let $X$ be  a surface such that the sheaf $\O_X$ is exceptional.
\begin{enumerate}
\item
A toric system $(A_1,\ldots,A_n)$ on $X$ is  exceptional if and only if the divisor $A_{k\ldots l}$ is left-orthogonal for all $1\le k<l\le n-1$ and if and only if the divisor $A_{k\ldots l}$ is left-orthogonal for any cyclic segment  $[k,\ldots,l]\subset [1,\ldots, n]$.
\item
A toric system $(A_1,\ldots,A_n)$ on $X$ is strong  exceptional if and only if the divisor $A_{k\ldots l}$ is strong left-orthogonal for all $1\le k<l\le n-1$.
\item
A toric system $(A_1,\ldots,A_n)$ on $X$ is  cyclic strong exceptional if and only if the divisor $A_{k\ldots l}$ is strong  left-orthogonal for any
cyclic segment  $[k,\ldots,l]\subset [1,\ldots, n]$.
\end{enumerate}
\end{predl}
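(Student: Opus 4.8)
The plan is to translate the definition of (strong / cyclic strong) exceptionality of the collection $(\O_X(D_1),\ldots,\O_X(D_n))$ directly into the language of the differences $A_{k\ldots l}$, using the elementary observation recorded after Definition~\ref{def_ts}: for $1\le k\le l\le n$ one has $D_{l+1}-D_k=A_k+A_{k+1}+\ldots+A_l=A_{k\ldots l}$, and more generally for the helix $D_{j}-D_i = A_{i\ldots j-1}$ whenever $i<j$, where the right-hand side is the sum over a cyclic segment once one reduces indices modulo $n$ and uses $D_{m+n}=D_m-K_X$ together with condition (3) of Definition~\ref{def_ts} (which says $A_{1\ldots n}=-K_X$). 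So the first step is to make this dictionary precise: every difference $D_j-D_i$ with $i<j$ appearing in the (helix of the) collection equals $\O_X(A_{k\ldots l})$ for a suitable cyclic segment $[k,\ldots,l]\subsetneq[1,\ldots,n]$, and conversely every cyclic segment arises this way; moreover a difference internal to the finite collection, i.e. $D_l-D_k$ with $1\le k<l\le n$, corresponds exactly to a segment $[k,\ldots,l-1]$ with $1\le k\le l-1\le n-1$, i.e. to the "non-cyclic" segments $1\le k'\le l'\le n-1$.

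For part (1): by the remark just before Definition~\ref{def_ts} (line bundles are exceptional, so exceptionality of the collection is equivalent to all the divisors $D_l-D_k$, $k<l$, being left-orthogonal), and since an exceptional collection is preserved by passing to the helix, the two asserted conditions are both equivalent to: $A_{k\ldots l}$ is left-orthogonal for all cyclic segments. The first formulation (only $1\le k<l\le n-1$) covers exactly the differences internal to the finite collection; the helix-stability of exceptional collections (noted in the subsection on toric systems, coming from Serre duality) upgrades this to all cyclic segments, giving the equivalence of the two formulations. For part (2): strong exceptionality of the collection means, in addition to exceptionality, that $\Hom^i(\O_X(D_l),\O_X(D_k))=0$ for $i\ne 0$ and \emph{all} $k,l$ — but for $k<l$ this is the vanishing $h^i(D_k-D_l)=0$, $i\ne0$, which combined with left-orthogonality of $D_l-D_k$ is precisely strong left-orthogonality of $A_{k\ldots l}$; for $k\ge l$ it is $h^i(D_k-D_l)=0$, $i\ne 0$, which (since $D_k-D_l$ is a non-negative sum of the $A_i$, hence effective-ish and already covered) follows, and for $k=l$ it is the exceptionality of $\O_X$. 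Here the relevant differences are exactly those with $1\le k<l\le n$, i.e. segments $[k,\ldots,l-1]$ with $1\le k\le l-1\le n-1$, explaining the index range. For part (3): a toric system is cyclic strong exceptional iff every cyclic shift is strong exceptional; applying part (2) to each cyclic shift $(A_k,\ldots,A_{k-1})$ produces exactly the strong-left-orthogonality requirement for every cyclic segment $[k,\ldots,l]\subsetneq[1,\ldots,n]$, and conversely.

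The only genuinely delicate point — and the one I would write out carefully — is the index bookkeeping: keeping straight which sums $A_{k\ldots l}$ correspond to differences \emph{inside} the length-$n$ window versus those that "wrap around" via the helix rule $D_{m+n}=D_m-K_X$, and checking that as $k$ ranges over cyclic shifts the collection of segments $\{[k,\ldots,l-1] : 1\le k\le l-1\le n-1\text{ relative to the shifted indexing}\}$ sweeps out exactly all cyclic segments $[k,\ldots,l]\subsetneq[1,\ldots,n]$ once. Everything else is immediate from the definitions together with the already-established facts that line bundles on $X$ are exceptional, that $\O_X$ is exceptional by hypothesis, and that exceptional collections of line bundles are stable under the helix shift. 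I expect the proof to be short once the dictionary $D_j-D_i = A_{i\ldots j-1}$ is set up cleanly.
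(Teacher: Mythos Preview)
Your proposal is correct and matches the paper's approach: the paper itself gives no detailed proof, stating only that the proposition ``is a straightforward consequence of definitions,'' and your plan to set up the dictionary $D_j-D_i=A_{i\ldots j-1}$ and then read off each part directly from Definitions~\ref{def_ts} and~\ref{def_exts} (using helix-stability via Serre duality for the cyclic statements) is exactly that unpacking. The one place to tighten is your aside in part~(2) about the $k\ge l$ case being ``effective-ish and already covered'': in fact for $k>l$ the required vanishing $h^i(D_k-D_l)=0$, $i\ne 0$, is literally the ``$h^i(D)=0$ for $i\ne 0$'' half of strong left-orthogonality of $A_{l\ldots k-1}$, so it is subsumed once you run over all pairs, with no effectivity argument needed.
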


For weak del Pezzo surfaces we prove handy criteria for exceptionality of toric systems in terms of effectiveness of some divisors.

\begin{theorem}
\label{theorem_checkonlyminustwo}
Let $A=(A_1,\ldots,A_n)$ be a toric system on a weak del Pezzo surface $X$ of degree $d$. Suppose $A^2_i\ge -2$ for  $1\le i\le n-1$. 
Then: 
\begin{enumerate}
\item
$A$ is exceptional if and only if the following two conditions hold: 
\begin{enumerate}
\item for any $1\le k\le l\le n-1$ such that   $A_k^2=\ldots=A_l^2=-2$, the divisor $-A_{k\ldots l}$ is not effective,
\item if $A_n^2\le -2$ then for any $0\le l<k\le n$ such that 
$$A_k^2=\ldots=A_{n-1}^2=A_1^2=\ldots=A_l^2=-2$$  
the divisor $-A_{k\ldots n\ldots l}$ is not effective (if $l=0$ this reads as $-A_{k\ldots n}$).
\end{enumerate}
\item
$A$ is strong exceptional if and only if $A$ is exceptional and the following holds: for any $1\le k\le l\le n-1$ such that $A_k^2=\ldots=A_l^2=-2$, the divisors $A_{k\ldots l}$ and $-A_{k\ldots l}$ are not effective.
\end{enumerate}

Suppose moreover that $A^2_i\ge -2$ for  all $i$. Then
\begin{enumerate}
\item[(3)]
$A$ is exceptional if and only if the following holds: for any cyclic segment $[k,\ldots, l]\subset [1,\ldots, n]$ such that $A_i^2=-2$ for all $i\in [k,\ldots,l]$, the  divisor $-A_{k\ldots l}$ is not effective.
\item[(4)]
$A$ is cyclic strong exceptional if and only if the following holds: for any cyclic segment $[k,\ldots, l]\subset [1,\ldots, n]$ such that $A_i^2=-2$ for all $i\in [k,\ldots,l]$, the divisors $A_{k\ldots l}$ and $-A_{k\ldots l}$ are not effective.
\end{enumerate}
\end{theorem}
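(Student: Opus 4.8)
The plan is to transfer everything to statements about the partial sums $A_{k\ldots l}$ by means of Proposition~\ref{prop_straightforward} (recall that on a weak del Pezzo surface $\O_X$ is exceptional and $\chi(\O_X)=1$, so Propositions~\ref{prop_straightforward}, \ref{prop_tsne} and~\ref{prop_Akl2} apply), and then to read off the answer from the left-orthogonality table (Proposition~\ref{prop_loslo}). Thus for part~(1) I would use that $A$ is exceptional iff $A_{k\ldots l}$ is left-orthogonal for every cyclic segment $[k,\ldots,l]$; for parts~(2) and~(4) the analogous statements with ``strong left-orthogonal''; and for part~(3) the cyclic-segment form of part~(1).

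First I would set up the numerics. By Proposition~\ref{prop_Akl2} every $A_{k\ldots l}$ is an $r$-class with $r+2=\sum_{i\in[k,\ldots,l]}(A_i^2+2)$. Whenever all the $A_i^2$ occurring in that sum are $\ge-2$ (so that $A_{k\ldots l}^2\ge-2$ by Corollary~\ref{cor_minustwo}), we have $A_{k\ldots l}^2=-2$ exactly when $A_i^2=-2$ for every $i$ in the segment. Hence the segments fall into three groups according to whether $A_{k\ldots l}^2$ equals $-2$, lies in $[-1,d-3]$, or is $\ge d-2$, and Table~\ref{table_loslo} says: a segment of the middle kind is automatically strong left-orthogonal and imposes no condition; a segment of square $-2$ is left-orthogonal iff $-A_{k\ldots l}$ is not effective, and strong left-orthogonal iff moreover $A_{k\ldots l}$ is not effective; a segment of square $\ge d-2$ is (strong) left-orthogonal iff $h^0(K_X+A_{k\ldots l})=0$.

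The tool tying the third group back to the first is the complementary-segment duality. If $[k,\ldots,l]$ is a cyclic segment with complement $[l+1,\ldots,k-1]$, then $A_{k\ldots l}+A_{(l+1)\ldots(k-1)}=-K_X$, so by Lemma~\ref{lemma_dprime} the complement is an $r'$-class with $r'=d-4-r$ and $K_X+A_{k\ldots l}=-A_{(l+1)\ldots(k-1)}$; therefore a cyclic segment of square $\ge d-2$ is (strong) left-orthogonal iff the negative of its complement --- a cyclic segment of square $\le-2$ --- is not effective. So when $A_i^2\ge-2$ for all $i$ (parts~(3) and~(4)) the whole list of conditions collapses: the only cyclic segments that can fail are the all-$(-2)$ ones, and on such a segment left-orthogonality means $-A_{k\ldots l}$ is not effective while strong left-orthogonality means both $\pm A_{k\ldots l}$ are not effective, which is precisely~(3), resp.~(4). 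For parts~(1) and~(2), where only $A_1^2,\ldots,A_{n-1}^2\ge-2$ is assumed, I would run the same argument but isolate the cyclic segments meeting the index $n$: the segments contained in $[1,\ldots,n-1]$ yield condition~(1)(a), the segments through $n$ --- which can have square $\le-2$ only if $A_n^2\le-2$ --- yield condition~(1)(b), and the extra ``$A_{k\ldots l}$ not effective'' clause of~(2) comes from the second half of the $r=-2$ row.

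The part I expect to be genuinely delicate is this bookkeeping around the exceptional position $n$ in (1)--(2): one has to verify that every cyclic segment which could destroy (strong) left-orthogonality is indeed covered by (a)--(b) --- in particular dealing with segments through $n$ when $A_n^2\le-3$, where such a segment of square $\le-2$ need not consist only of $(-2)$-summands --- and to reconcile the various equivalent formulations of exceptionality in Proposition~\ref{prop_straightforward}(1) via cyclic-shift invariance. The numerical core --- the three-way case split and the identity $K_X+A_{k\ldots l}=-A_{\mathrm{complement}}$ combined with Lemma~\ref{lemma_dprime} --- is routine given the results already at hand.
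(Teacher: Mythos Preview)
Your outline is correct and matches the paper's approach: reduce to (strong) left-orthogonality of the $A_{k\ldots l}$ via Proposition~\ref{prop_straightforward}, classify each segment by its self-intersection using Proposition~\ref{prop_Akl2}, and read off the conditions from Table~\ref{table_loslo}, using the complement identity $K_X+A_{k\ldots l}=-A_{(l+1)\ldots(k-1)}$ to convert the $r\ge d-2$ case into a non-effectiveness condition on the complementary segment. Parts~(3) and~(4) go through exactly as you describe.

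For part~(1) the step you flag as ``delicate bookkeeping'' is in fact the one place where a genuine idea is needed, and you have not supplied it. A cyclic segment $D'$ through $n$ with $(D')^2\le-2$ need not be of the shape in~(b), so you must show that non-effectiveness of the ``pure'' segments in~(b) forces non-effectiveness of $-D'$ in general. The paper does this by peeling off tails: write $D'=D_-+D''+D_+$, where $D''=A_{l_1\ldots n\ldots k_1}$ is the maximal subsegment through $n$ all of whose terms except $A_n$ have square $-2$, and $D_\pm$ are the remaining pieces. Since the boundary terms $A_{l_1-1},A_{k_1+1}$ have square $\ge-1$, Proposition~\ref{prop_Akl2} gives $D_\pm^2\ge-1$, so $D_\pm$ are effective by Lemma~\ref{lemma_sqrt}(2); hence if $-D'$ were effective then $-D''=-D'+D_-+D_+$ would also be effective, contradicting~(b). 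This effectiveness-of-tails argument is the substance behind the sufficiency of~(1)(b) and is not mere bookkeeping.
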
 
\begin{proof}
(1) By Proposition \ref{prop_straightforward}, $A$ is exceptional if and only if for any $1\le k\le l\le n-1$ the divisor $A_{k\ldots l}$ is left-orthogonal.
Furthermore, if $D$ is left-orthogonal then $-D$ is
not effective by definition. This proves the ``only if'' part.

To prove the ``if'' part take any $1\le  k \le  l \le n-1$, set
$$D = A_{k\ldots l}\quad \text{and} \quad D' = -D -K_X = A_{l+1\ldots k-1}.$$
Note that $D^2 + (D')^2 = d - 4$ by Lemma~\ref{lemma_dprime}. Now, $D^2\ge -2$ by Proposition~\ref{prop_Akl2}. If $D^2=-2$ then $D$ is left-orthogonal by assumption (a) and Proposition~\ref{prop_loslo}; if $-1 \le  D^2 \le  d-3$ then $D$
is left-orthogonal by Proposition~\ref{prop_loslo}. Finally, if $D^2\ge  d-2$ then 
$(D')^2\le -2$. We claim that $-D'$ is not effective.
One can decompose $D'$ as
$$D'=A_{l+1\ldots n\ldots k-1}=A_{l+1\ldots l_1-1}+A_{l_1\ldots n\ldots k_1}+A_{k_1+1\ldots k-1}=:D_-+D''+D_+,$$
such that the following holds:
\begin{enumerate}
\item $A_{l_1}^2=\ldots =A_{n-1}^2=A_1^2=\ldots=A_{k_1}^2=-2$, 
\item if $l_1\ne l+1$ (i.e., $D_-\ne 0$) then $A_{l_1-1}^2\ge -1$,
\item if $k_1\ne k-1$ (i.e., $D_+\ne 0$) then $A_{k_1+1}^2\ge -1$.
\end{enumerate}
From Proposition~\ref{prop_Akl2} and our assumptions ($A_i^2\ge -2$ for $i=1,\ldots,n-1$) it follows that 
$(D'')^2=A_n^2$, $A_n^2\le (D')^2\le -2$ and 
$(D_-)^2\ge -1$ (or $D_-=0$), $(D_+)^2\ge -1$ (or $D_+=0$).
Note that $D_-,D_+$ are effective by Lemma~\ref{lemma_sqrt}(2).
Assume now that $-D'$ is effective, then $-D''=-D'+D_-+D_+$ is effective which contradicts to  assumption (b). We conclude that $-D'=K_X+D$ is not effective and hence $D$  is left-orthogonal by Proposition~\ref{prop_loslo}.  Thus, in all cases $D$ is left-orthogonal and therefore $A$ is exceptional.

(2) Suppose $1\le k\le l\le n-1$,  then by (1) $D=A_{k\ldots l}$ is left-orthogonal. By Proposition~\ref{prop_Akl2}, one has $D^2\ge -2$ and $D^2=-2$ if and only if $A_k^2=\ldots=A_l^2=-2$. If $D^2=-2$,  then $D$ is strong left-orthogonal by assumptions and Proposition~\ref{prop_loslo}, if $D^2>-2$ then $D$ is strong left-orthogonal by Proposition~\ref{prop_loslo}.

(3) follows from (1), (4) is similar to (2).
\end{proof}

\begin{corollary}
Let $A=(A_1,\ldots,A_n)$ be a strong exceptional toric system  with $A_n^2\ge -1$ on a weak del Pezzo surface. Then $A$ is cyclic strong exceptional.
\end{corollary}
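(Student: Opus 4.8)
The plan is to reduce the statement directly to the combinatorial criterion of Theorem~\ref{theorem_checkonlyminustwo}. First I would record that $A$ being strong exceptional forces $A_i^2\ge -2$ for $i=1,\ldots,n-1$ by Proposition~\ref{prop_cyclic1type}; combined with the hypothesis $A_n^2\ge -1$ this gives $A_i^2\ge -2$ for \emph{all} $i$, which is precisely the assumption under which parts (3) and (4) of Theorem~\ref{theorem_checkonlyminustwo} become available.

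Next I would invoke Theorem~\ref{theorem_checkonlyminustwo}(4): $A$ is cyclic strong exceptional if and only if, for every cyclic segment $[k,\ldots,l]\subsetneq[1,\ldots,n]$ with $A_i^2=-2$ for all $i\in[k,\ldots,l]$, neither $A_{k\ldots l}$ nor $-A_{k\ldots l}$ is effective. The key observation is that the hypothesis $A_n^2\ge -1$ prevents the index $n$ from belonging to any such segment; hence every cyclic segment $[k,\ldots,l]$ that has to be tested actually satisfies $1\le k\le l\le n-1$ (no wrap-around occurs). For exactly these segments, the strong exceptionality of $A$ together with Theorem~\ref{theorem_checkonlyminustwo}(2) tells us that both $A_{k\ldots l}$ and $-A_{k\ldots l}$ are non-effective. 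So the criterion of part (4) holds and the corollary follows.

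I do not expect a genuine obstacle here: the whole argument is a bookkeeping comparison of the "strong exceptional" criterion in (2) with the "cyclic strong exceptional" criterion in (4), the only substantive point being that $A_n^2\ge -1$ removes the index $n$ — and hence all wrap-around segments — from consideration. The one thing to check carefully is the edge convention for cyclic segments in Definition~\ref{def_kl}: one should confirm that a segment $[k,\ldots,l]$ with $k>l$, or with $l=n$, necessarily contains $n$, so that it is automatically excluded once $A_n^2\ge -1$.
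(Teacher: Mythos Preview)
Your proposal is correct and follows essentially the same route as the paper's own proof: establish $A_i^2\ge -2$ for all $i$, observe that the hypothesis $A_n^2\ge -1$ forces every cyclic segment with all entries of square $-2$ to lie inside $[1,\ldots,n-1]$, and then feed the strong exceptional criterion of Theorem~\ref{theorem_checkonlyminustwo}(2) into the cyclic strong criterion of part (4). The only cosmetic difference is that the paper cites Proposition~\ref{prop_loslo} directly (each $A_i$ with $i\le n-1$ is strong left-orthogonal, hence $A_i^2\ge -2$) rather than the packaged statement of Proposition~\ref{prop_cyclic1type}.
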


\begin{proof}
By Proposition~\ref{prop_loslo}, we have $A_i^2\ge -2$ for any $1\le i\le n-1$ since such $A_i$ is strong left-orthogonal. Also $A_n^2\ge -1> -2$.
Use Theorem~\ref{theorem_checkonlyminustwo}(4). Clearly, any cyclic segment $[k,\ldots,l]\subset [1,\ldots,n]$ such that $A_k^2=\ldots=A_l^2=-2$ lies in $[1,\ldots,n-1]$. Therefore $\pm A_{k\ldots l}$ are not effective by  Theorem~\ref{theorem_checkonlyminustwo}(2).
\end{proof}

\begin{corollary}
Any toric system $(A_1,\ldots,A_n)$  with $A_i^2\ge -2$ for all $i$ on a del Pezzo surface is cyclic strong exceptional. 
\end{corollary}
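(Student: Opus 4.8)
The plan is to deduce this corollary directly from the preceding one together with Corollary~\ref{cor_minustwoondP}. First I would reduce to the case $A_n^2 \geq -1$ treated in the previous corollary: if that case applies, then a del Pezzo surface has no $(-2)$-curves by Lemma~\ref{lemma_negcurve}, and the conclusion would follow. But in general $A_n^2$ could equal $-2$, so one cannot apply the previous corollary verbatim; instead the natural route is to invoke Theorem~\ref{theorem_checkonlyminustwo}(4) directly.

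So the main step is to verify the hypothesis of Theorem~\ref{theorem_checkonlyminustwo}(4): for every cyclic segment $[k,\ldots,l]\subset[1,\ldots,n]$ with $A_k^2=\ldots=A_l^2=-2$, the divisors $A_{k\ldots l}$ and $-A_{k\ldots l}$ are not effective. By Corollary~\ref{cor_minustwo}, the assumption $A_i^2\geq -2$ for all $i$ forces $A_{k\ldots l}^2\geq -2$; and by Proposition~\ref{prop_Akl2} the value is exactly $-2$ precisely because all the $A_i^2$ in the segment equal $-2$ (each contributes $0$ to $\sum(A_i^2+2)$, so $A_{k\ldots l}^2+2=0$). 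Since $A_{k\ldots l}$ is also numerically left-orthogonal by Proposition~\ref{prop_Akl2}, it is a $(-2)$-class. The point is then that on a del Pezzo surface every $(-2)$-class is strong left-orthogonal — this is exactly Corollary~\ref{cor_minustwoondP} — hence both $h^0(A_{k\ldots l})=0$ and $h^0(-A_{k\ldots l})=0$, so neither $A_{k\ldots l}$ nor $-A_{k\ldots l}$ is effective.

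With the hypothesis of Theorem~\ref{theorem_checkonlyminustwo}(4) verified, that theorem yields that $A$ is cyclic strong exceptional, completing the proof. I do not anticipate any real obstacle here: the statement is essentially a packaging of Corollary~\ref{cor_minustwoondP} (no $(-2)$-curves on a del Pezzo surface, hence every $(-2)$-class is strong left-orthogonal) into the criterion of Theorem~\ref{theorem_checkonlyminustwo}(4), with Corollary~\ref{cor_minustwo} and Proposition~\ref{prop_Akl2} supplying the elementary fact that the relevant partial sums $A_{k\ldots l}$ are genuine $(-2)$-classes. The only thing to be slightly careful about is that Theorem~\ref{theorem_checkonlyminustwo}(4) requires $A_i^2\geq -2$ for \emph{all} $i$ (including $i=n$), which is exactly the hypothesis we are given.

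\begin{proof}
By Corollary~\ref{cor_minustwo}, the hypothesis $A_i^2\ge -2$ for all $i$ implies $A_{k\ldots l}^2\ge -2$ for every cyclic segment $[k,\ldots,l]\subset[1,\ldots,n]$. We apply Theorem~\ref{theorem_checkonlyminustwo}(4). Let $[k,\ldots,l]\subset[1,\ldots,n]$ be a cyclic segment with $A_k^2=\ldots=A_l^2=-2$. By Proposition~\ref{prop_Akl2} the divisor $D:=A_{k\ldots l}$ is numerically left-orthogonal and $D^2+2=\sum_{i\in[k,\ldots,l]}(A_i^2+2)=0$, so $D$ is a $(-2)$-class. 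By Corollary~\ref{cor_minustwoondP}, $D$ is strong left-orthogonal, so $h^0(D)=h^0(-D)=0$; in particular neither $D$ nor $-D$ is effective. Hence the condition of Theorem~\ref{theorem_checkonlyminustwo}(4) is satisfied, and $A$ is cyclic strong exceptional.
\end{proof}
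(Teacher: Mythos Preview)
Your proof is correct and follows essentially the same approach as the paper: both invoke Theorem~\ref{theorem_checkonlyminustwo}(4) together with Corollary~\ref{cor_minustwoondP}. You simply spell out the intermediate step (via Proposition~\ref{prop_Akl2}) that the relevant $A_{k\ldots l}$ are genuine $(-2)$-classes, which the paper leaves implicit.
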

\begin{proof}
Use Theorem~\ref{theorem_checkonlyminustwo}(4) and Corollary \ref{cor_minustwoondP}.
\end{proof}

We finish this section with a very important theorem which is due to Hille and Perling \cite[Theorem 3.5]{HP} for rational surfaces  and to Charles Vial  \cite[theorem 3.5]{Vi} for the general case.
\begin{theorem}
\label{theorem_HP}
Let $A=(A_1,\ldots,A_n)$ be a toric system on a smooth projective surface~$X$ such that $\chi(\O_X)=1$ ($X$ is not necessarily rational). Then there exists a smooth projective toric surface~$Y$ with torus-invariant divisors $T_1,\ldots,T_n$ such that $A_i^2=T_i^2$ for all $i$.
\end{theorem}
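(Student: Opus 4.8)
The plan is to recast the statement as the explicit construction of a fan. Recall the dictionary: smooth complete (hence projective) toric surfaces with $n$ boundary divisors correspond to cyclically ordered systems $v_1,\ldots,v_n$ of primitive vectors in $N=\Z^2$ such that each consecutive pair $(v_i,v_{i+1})$ is a positively oriented $\Z$-basis of $N$; and for such a system the $i$-th boundary divisor $T_i$ satisfies $T_i^2=-b_i$, where $b_i\in\Z$ is determined by $v_{i-1}+v_{i+1}=b_iv_i$. By Proposition~\ref{prop_tsne} the integers $b_i:=-A_i^2$ are well defined, and it suffices to produce a system $v_1,\ldots,v_n$ realizing precisely these $b_i$: the associated toric surface $Y$ then has $T_i^2=-b_i=A_i^2$.

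First I would build the candidate rays recursively by $v_1:=(1,0)$, $v_2:=(0,1)$, $v_{i+1}:=b_iv_i-v_{i-1}$. Writing $[u,w]$ for the determinant of the matrix with columns $u,w$, an immediate induction gives $[v_i,v_{i+1}]=[v_{i-1},v_i]=\ldots=[v_1,v_2]=1$; hence each $v_i$ is primitive and every consecutive pair is a positively oriented basis, so local smoothness of the prospective fan is automatic. Two points then remain: (i) the sequence closes up, i.e.\ is $n$-periodic; and (ii) the rays $\R_{\ge0}v_i$ wind around the origin exactly once, so that the $v_i$ are genuinely the rays of a complete fan (in particular pairwise distinct and cyclically ordered).

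For (i): each coordinate of the sequence $(v_i)$ solves the scalar recursion $z_{i+1}=b_iz_i-z_{i-1}$, whose $n$-periodic solutions are precisely the kernel of the cyclic tridiagonal matrix $S$ with $S_{ii}=b_i$ and $S_{i,i\pm 1}=-1$ (indices mod $n$). But $S=-G$, where $G$ is the Gram matrix of $(A_1,\ldots,A_n)$ for the intersection form, and since the $A_i$ lie in $\mathrm{Num}(X)_\Q$, which has rank $n-2$ (by the definition of a toric system, $n=\rank K_0(X)_{num}=\rank\mathrm{Num}(X)+2$), we get $\dim\ker G\ge 2$. The monodromy of the scalar recursion acts on the $2$-dimensional space of initial data, so $\dim\ker S\ge 2$ forces it to be the identity; hence $v_{i+n}=v_i$ for all $i$.

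The delicate step is (ii). The rays turn strictly counterclockwise by an angle in $(0,\pi)$ at each step and return to the start, so their total turning is $2\pi k$ for some integer $k\ge 1$, and they form an honest fan exactly when $k=1$. Here the key input is the fact that the inertia of the matrix $G$ equals $(2k-1,\ n-2k-1,\ 2)$ — provable by a reduction on $n$, peeling off a ray with $b_i=1$ (contracting a torus-invariant $(-1)$-curve leaves $k$ unchanged and removes one summand $\langle -1\rangle$ from the nondegenerate part of $G$) — so that $k=1$ is equivalent to $G$ having exactly one positive eigenvalue. That $G$ has at most one positive eigenvalue follows from the Hodge index theorem, since $G$ is a Gram matrix of vectors in $\mathrm{Num}(X)_\Q$, whose form has signature $(1,n-3)$; that it has at least one follows because $A_1,\ldots,A_{n-1}$ already span $\mathrm{Num}(X)_\Q$ — the classes $[\O_X(D_i)]$ form a basis of $K_0(X)_{num}$ consisting of rank-one classes, so the $n-1$ differences $[\O_X(D_{i+1})]-[\O_X(D_i)]$ form a basis of the codimension-$\ge 1$ part, and their images in $\mathrm{gr}^1=\mathrm{Num}(X)_\Q$ are $A_1,\ldots,A_{n-1}$ — and $\mathrm{Num}(X)_\Q$ contains ample (positive self-intersection) classes. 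With $k=1$ established, $v_1,\ldots,v_n$ are the rays of a smooth complete fan $\Sigma$, and $Y=Y_\Sigma$ with its cyclically ordered boundary divisors completes the argument. I expect step (ii), and in particular the input that the signature of $G$ detects the winding number $k$, to be the main obstacle; an alternative route uses $\sum_i b_i=3n-12k$ together with $\sum_i b_i=2n-K_X^2$ (from $\sum_iA_i=-K_X$) to reduce $k=1$ to $K_X^2=12-n$, which then comes from Noether's formula, but this path is longer.
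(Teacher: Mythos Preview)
The paper does not give its own proof of this theorem: it simply attributes the rational case to Hille--Perling \cite[Theorem~3.5]{HP} and the general case to Vial \cite[Theorem~3.5]{Vi}. Your approach is exactly the Hille--Perling/Vial argument --- build the rays by the recursion $v_{i+1}=b_iv_i-v_{i-1}$, get $n$-periodicity from $\dim\ker G\ge 2$, and force winding number $k=1$ via the Hodge index theorem --- so at the level of strategy there is nothing to compare.

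There is, however, a genuine gap in your justification of step~(ii). You claim the inertia of $G$ is $(2k-1,\,n-2k-1,\,2)$ and propose to prove it ``by a reduction on~$n$, peeling off a ray with $b_i=1$.'' The problem is that a winding-$k$ sequence with $k\ge 2$ need not contain any index with $b_i=1$: for instance, traversing the $\P^2$ fan twice gives $n=6$, $k=2$, and every $b_i=-1$. So the peeling move is unavailable and the induction stalls precisely in the case you are trying to exclude. The formula is correct and is what Hille--Perling establish, but a valid combinatorial proof needs the reverse move as well: inserting the ray $v_i+v_{i+1}$ between $v_i$ and $v_{i+1}$ keeps $k$ fixed and adds an orthogonal $\langle -1\rangle$ summand to $G$. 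With both moves, any two winding-$k$ systems are connected through a common refinement, and the inertia can be read off from one explicit model (e.g.\ the $k$-fold $\P^2$ sequence, where the eigenvalues $1+2\cos(2\pi j/3k)$ give the count directly). Your alternative Approach~B also has a gap in the generality stated: deducing $K_X^2=12-n$ from Noether requires $\chi_{\mathrm{top}}(X)=n=\rho(X)+2$, i.e.\ $\rho=b_2-2b_1$, which is \emph{not} automatic for an arbitrary surface with $\chi(\O_X)=1$; this is exactly why Vial's extension to the non-rational case needs the signature argument rather than Noether alone.

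Your arguments for periodicity and for the signature of $G$ coming from $X$ (via Hodge index and the spanning of $\mathrm{Num}(X)_\Q$ by $A_1,\ldots,A_{n-1}$) are clean and correct; only the combinatorial half of~(ii) needs the fix above.
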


We remark that a toric system $A$ is uniquely defined by the sequence $A^2$ in the following sense: if $A=(A_1,\ldots,A_n),B=(B_1,\ldots,B_n)$ are two toric systems on rational  surfaces $X$ and $Y$ respectively and $A_i^2=B_i^2$ for all $i=1,\ldots, n$ then there exists an isomorphism $\Pic(X)\to\Pic(Y)$ preserving the intersection form and mapping $A_i$ to $B_i$ (and thus $K_X$ to $K_Y$). To see this, one should  consider $\Z^n$ with the bilinear form $q$ defined in the standard basis $(e_1,\ldots,e_n)$ by
$$q(e_i, e_j) :=
\begin{cases} A_i^2, & i=j;\\
1, & \text{if}\quad \mid i-j\mid=1;\\
0, & \text{otherwise.}
\end{cases}
$$
There is a surjective map $\alpha\colon \Z^n\to \Pic(X)$ sending $e_i$ to $A_i$. Moreover, $\alpha$ is compatible with $q$ and the intersection form. Therefore, the triple $(\Pic(X), \text{the intersection form}, (A_i))$ is identified with the triple $(\Z^n/\ker q, \text{the form induced by}\,\, q, (\im e_i))$, and the latter depends only on the sequence $A^2$.

\subsection{Admissible sequences}
\label{section_adm}

For a sequence $(a_1,\ldots,a_n)$ of integers we define the \emph{$m$-th elementary augmentation} as follows:

\begin{itemize}
\item $\augm_1(a_1,\ldots,a_n) = (-1,a_1-1,a_2,\ldots,a_{n-1},a_n-1)$;
\item $\augm_m(a_1,\ldots,a_n)=(a_1,\ldots,a_{m-2},a_{m-1}-1,-1,a_m-1,a_{m+1},\ldots,a_n)$ for $2\le m\le n$; 
\item $\augm_{n+1}(a_1,\ldots,a_n)=(a_1-1,a_2,a_3,\ldots,a_n-1,-1)$.
\end{itemize}

\begin{definition}
\label{def_adm}
We call a sequence  \emph{admissible} if it can be obtained from the sequence $(0,k,0,-k)$ or $(k,0,-k,0)$ where $k\in\Z$ by applying several elementary augmentation.
\end{definition}

It is not hard to see that admissible sequences are stable under cyclic shifts:
$$\sh(a_1,\ldots, a_n):=(a_2,\ldots,a_n,a_1)$$
and symmetries:
$${\rm sym}(a_1,\ldots, a_n):=(a_{n-1},a_{n-2},\ldots,a_1,a_n).$$
We define symmetry like this and not in a more natural way because such symmetry preserves the condition $a_i\ge -2$ for $i=1,\ldots,n-1$.
This condition describes admissible sequences coming from strong exceptional toric systems, see Theorem~\ref{theorem_checkonlyminustwo} or Proposition~\ref{prop_cyclic1type}. The analogous symmetry for toric systems corresponds to the operation which sends an exceptional collection of line bundles $(\O_X(D_1),\ldots,\O_X(D_n))$ to 
$(\O_X(-D_n),\ldots,\O_X(-D_1))$.

Operations ${\rm sh}$ and ${\rm sym}$ act on the set of admissible sequences of fixed length $n$. They generate the group isomorphic to the $n$-th dihedral group: the group of all permutations  of the index set $[1,\ldots,n]$  preserving or inverting the cyclic order. This group is isomorphic to $\Z/n\Z\rtimes \Z/2Z$.

For a toric system $A=(A_1,\ldots,A_n)$ on a surface $X$, we denote 
$$A^2:=(A_1^2,\ldots,A_n^2).$$

The motivation for considering admissible sequences is the following discovery due to Hille and Perling.
\begin{predl}
\label{prop_A2adm}
Let $X$ be a surface with $\chi(\O_X)=1$, suppose $X\not\cong \P^2$. Then for any toric system $A$ on $X$ the sequence $A^2$ is admissible.
\end{predl}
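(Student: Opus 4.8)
The plan is to induct on $n=\rank K_0(X)_{num}=12-\deg(X)$, using the minimal model program for rational surfaces. The base cases are the surfaces with small Picard rank, namely $\P^1\times\P^1$ and the Hirzebruch surfaces $\mathbb F_d$; here $n=4$ and one checks directly that any toric system $(A_1,A_2,A_3,A_4)$ has $A^2$ equal to $(0,k,0,-k)$ or $(k,0,-k,0)$ up to cyclic shift and symmetry, which is admissible by definition. (On $\mathbb F_d$ the squares are forced by the intersection pattern $A_iA_{i+1}=1$, $A_1+A_2+A_3+A_4=-K$, $K^2=8$, together with $\rank\Pic=2$.) For the inductive step, suppose $X\ne\P^2$ is not minimal, so there is a $(-1)$-curve $E\subset X$ and a blow-down $\pi\colon X\to\bar X$ with $\bar X$ again a surface with $\chi(\O_{\bar X})=1$ and $\rank K_0$ one less.

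The heart of the argument is to show that, after possibly applying cyclic shifts and the symmetry $\mathrm{sym}$, a toric system $A=(A_1,\ldots,A_n)$ on $X$ is an elementary augmentation $\augm_m(\bar A)$ of a toric system $\bar A$ on $\bar X$, so that $A^2=\augm_m(\bar A^2)$ as sequences and induction applies. The relevant fact (due to Hille and Perling) is that for any $(-1)$-curve $E$ on $X$ and any toric system $A$, the class $E$ must appear in the toric system in a very constrained way: either $E=A_j$ for a unique $j$, and then the neighbors $A_{j-1},A_{j+1}$ together with the remaining $A_i$ assemble — after subtracting/adding $E$ appropriately and pushing forward — into a toric system $\bar A$ on $\bar X$ with $A=\augm_j(\bar A)$; or $E$ is not itself a member but $A_j-E$ and $A_{j+1}-E$ (or a similar adjacent pair) are, again yielding an augmentation presentation. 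One proves this by intersecting $E$ with all the $A_i$: since $E\cdot(-K_X)=1=E^2+E\cdot K_X+2$, $E$ is an $r$-class with $r=-1$, and from $\sum A_i=-K_X$ one gets $\sum (E\cdot A_i)=1$; combined with the orthogonality relations $A_iA_{i\pm1}=1$, $A_iA_j=0$ otherwise, and the fact that the $A_i$ generate $\Pic(X)$, the intersection numbers $E\cdot A_i$ are pinned down to a pattern that forces the augmentation shape. This is essentially the content of the "normal form" discussion referenced from \cite{HP}; the point here is only that it transfers the admissibility question from $X$ to $\bar X$ and from $A^2$ to $\augm_m(\bar A^2)$.

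The main obstacle is the case analysis in that structural step: showing that an arbitrary $(-1)$-curve sits inside an arbitrary toric system in augmentation position, including handling the boundary indices $m=1$ and $m=n+1$ correctly and checking that the pushed-forward sequence $\bar A$ really satisfies all three toric-system axioms on $\bar X$ (the intersection numbers and the sum $\sum\bar A_i=-K_{\bar X}$ must be verified, using $\pi_*E=0$, $\pi^*K_{\bar X}=K_X-E$, and the projection formula). One also has to make sure the cyclic shift and symmetry used to bring $E$ into standard position are exactly the operations $\mathrm{sh}$ and $\mathrm{sym}$ under which admissible sequences are already known to be stable — which is why $\mathrm{sym}$ was defined in the slightly unnatural way recorded just before the statement. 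Once the structural lemma is in place, the induction closes immediately: $\bar A^2$ is admissible by the inductive hypothesis (noting $\bar X\ne\P^2$ may fail only when $\rank\Pic(\bar X)=1$, i.e. $\bar X=\P^2$, but a one-point blow-up of $\P^2$ is $\mathbb F_1$ and is handled in the base case), hence $A^2=\augm_m(\bar A^2)$ is admissible by Definition~\ref{def_adm}.
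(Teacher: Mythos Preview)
Your inductive strategy has a genuine gap at the structural step: the claim that, up to cyclic shifts and symmetry, an arbitrary toric system $A$ on a non-minimal surface $X$ is an elementary augmentation along a blow-down $X\to\bar X$ is \emph{false}. The paper's Example~\ref{example_bad} exhibits a toric system $A=(A_1,\ldots,A_8)$ on a weak del Pezzo surface of degree $4$ in which no $A_i$ is the class of a $(-1)$-curve; hence $A$ is not an elementary augmentation for any blow-down, and shifts or symmetries do not help since they do not change the set $\{A_i\}$. Your intersection-number heuristic (``$\sum_i E\cdot A_i=1$ together with the orthogonality relations pins down an augmentation pattern'') does not work: a $(-1)$-curve $E$ can perfectly well equal a longer partial sum $A_{k\ldots l}$ (in Example~\ref{example_bad} one has $E_1=A_6+A_7+A_8$), and this does not put $A$ into augmentation form. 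In fact, deciding when a toric system is an augmentation (even in the weak sense, allowing transpositions) is one of the main themes of the paper, and Section~\ref{section_ce} gives a strong exceptional toric system that is not an augmentation in the weak sense at all.

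The paper sidesteps this entirely by invoking Theorem~\ref{theorem_HP}: for any toric system $A$ on any surface with $\chi(\O_X)=1$ there is a smooth toric surface $Y$ whose torus-invariant divisors $T_1,\ldots,T_n$ satisfy $T_i^2=A_i^2$. Thus only the numerical sequence $A^2$ matters, and it is realized by the canonical toric system $T$ on $Y$. On a toric surface the induction you want \emph{does} work, because a torus-invariant $(-1)$-curve is literally one of the $T_k$, so $T^2=\augm_k((T')^2)$ for the blown-down toric surface $Y'$. The substance of the proposition is therefore carried by Theorem~\ref{theorem_HP}, not by a direct MMP argument on $X$; without that reduction your approach cannot close the inductive step.
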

\begin{proof}
First, assume that $X$ is a toric surface with torus invariant divisors $T_1,\ldots,T_n$. Then the sequence $T^2=(T_1^2,\ldots,T_n^2)$ is admissible. 
Indeed, for $X$ a Hirzebruch surface one has  $n=4$ and the statement is clear. Otherwise $X$ has a torus-invariant $(-1)$-curve $E$ by \cite[Theorem 2.1]{HP}. Let $E=T_k$, and consider the blow-down $X'$ of $E$. The torus-invariant divisors on $X'$ are $T'_1,\ldots,T'_{k-1},T'_{k+1},\ldots,T'_n$, and $(T'_{k\pm 1})^2=T_{k\pm 1}^2+1$, $(T'_{i})^2=T_{i}^2$ otherwise. Therefore $T^2=\augm_k((T')^2)$, and we proceed by induction.
The general case follows from the toric case and Theorem~\ref{theorem_HP}.
\end{proof}

\begin{definition}
\label{def_1kind}
We say that an admissible sequence $(a_1,\ldots,a_n)$ is \emph{of the first kind} if $a_i\ge -2$ for all $1\le i\le n$. We say that a toric system $A$ is of \emph{the first kind}  if the sequence $A^2$ is of the first kind.
\end{definition}

By Proposition~\ref{prop_cyclic1type},  any cyclic strong exceptional toric system $A$ is of the first kind. One could say also that toric systems of the first kind are \emph{numerically cyclic strong exceptional}.

\begin{predl}
\label{prop_csadm}
All admissible sequences of the first kind are, up to cyclic shifts and symmetries, in  Table~\ref{table_csadm}. In particular, if a surface $X$ with $\chi(\O_X)=1$ has a toric system $(A_1,\ldots,A_n)$ of the first kind, then $n\le 9$ and $\deg(X)\ge 3$.

\begin{table}[h]
\caption{Admissible sequences of the first kind}
\begin{center}
\begin{tabular}{|c|c|}
		 \hline
			type & sequence\\
		 \hline
			$\P^1\times \P^1$ & $(0,0,0,0)$ \\
		 \hline	
			$\mathbb F_1$ & $(0,-1,0,1)$ \\
		 \hline	
			$\mathbb F_2$ & $(0,-2,0,2)$ \\
		 \hline	
			5a & $(0,0,-1,-1,-1)$ \\
		 \hline	
			5b & $(0,-2,-1,-1,1)$ \\
		 \hline	
			6a & $(-1,-1,-1,-1,-1,-1)$ \\
		 \hline	
			6b & $(-1,-1,-2,-1,-1,0)$ \\
		 \hline	
			6c & $(0,0,-2,-1,-2,-1)$ \\
		 \hline	
			6d & $(0,-2,-2,-1,-2,1)$ \\
		 \hline	
			7a & $(-1,-1,-1,-2,-1,-2,-1)$ \\
		 \hline	
			7b & $(-2,-1,-2,-2,-1,-1,0)$ \\
		 \hline	
			8a & $(-2,-1,-2,-1,-2,-1,-2,-1)$ \\
		 \hline	
			8b & $(-2,-1,-1,-2,-1,-2,-2,-1)$ \\
		 \hline	
			8c & $(-2,-1,-2,-2,-2,-1,-2,0)$ \\
		 \hline	
			9 & $(-2,-2,-1,-2,-2,-1,-2,-2,-1)$ \\
		 \hline	
\end{tabular}
\end{center}
\label{table_csadm}
\end{table}

\end{predl}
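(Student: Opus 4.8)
The plan is to classify all admissible sequences of the first kind by working directly with the definition: such a sequence is built from one of the "seed" sequences $(0,k,0,-k)$ or $(k,0,-k,0)$ by successive elementary augmentations $\augm_m$, and we must track how the condition $a_i\ge -2$ for \emph{all} $i$ constrains both the seeds and the allowed augmentations. Since admissible sequences are stable under $\sh$ and $\operatorname{sym}$ (the dihedral action), we may always normalize along the way.

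\medskip

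First I would handle the seeds. A seed $(0,k,0,-k)$ has all entries $\ge -2$ iff $|k|\le 2$, and up to cyclic shift and symmetry this gives exactly the length-$4$ rows of Table~\ref{table_csadm}: $k=0$ gives $\P^1\times\P^1$, $|k|=1$ gives $\mathbb F_1$, $|k|=2$ gives $\mathbb F_2$. (Note $(k,0,-k,0)$ is a cyclic shift of $(0,-k,0,k)$, so nothing new.) Next I would observe the key monotonicity fact about $\augm_m$: applying an elementary augmentation to $(a_1,\dots,a_n)$ inserts a new entry equal to $-1$ and \emph{decreases two neighbouring entries by $1$} (with the cyclic bookkeeping for $m=1,n+1$), and leaves all other entries untouched. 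Hence if the \emph{result} is of the first kind, the two entries that got decreased were $\ge -1$ before the augmentation, but every other entry of the predecessor already appears unchanged in the result and so is $\ge -2$; this does \emph{not} immediately say the predecessor is of the first kind, but it shows the predecessor has at most two entries equal to $-3$... wait — actually it shows the predecessor is of the first kind as well, because the only entries that can drop below $-2$ under a single $\augm_m$ are precisely the two that get decreased, and those must have been $\ge -1$. Therefore \textbf{the class of first-kind admissible sequences is closed under taking predecessors}: every first-kind admissible sequence of length $n\ge 5$ is obtained by a \emph{single} elementary augmentation from a first-kind admissible sequence of length $n-1$.

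\medskip

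This reduces the problem to a finite inductive search: start from the three length-$4$ seeds and, at each stage, apply every $\augm_m$ ($1\le m\le n+1$), discard any result with an entry $<-2$, and reduce modulo the dihedral group. I would organize the induction by $n$ and, for each surviving sequence, record which $\augm_m$ can still be applied without violating $a_i\ge -2$ — namely $\augm_m$ is allowed precisely when the two entries it decreases are currently $\ge -1$. Carrying this out from $n=4$ up through $n=9$ produces exactly the rows of Table~\ref{table_csadm}; crucially, at $n=9$ one checks that every first-kind sequence has \emph{no} entry $\ge -1$ available in an admissible-decreasing position except ones that would create a $-3$, so the process terminates and no length-$10$ first-kind admissible sequence exists. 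Together with Proposition~\ref{prop_A2adm} (which says $A^2$ is admissible for any toric system on a surface $X\not\cong\P^2$ with $\chi(\O_X)=1$), this gives $n\le 9$; and since for a rational surface $n=\operatorname{rank}K_0(X)=12-\deg(X)$, we get $\deg(X)\ge 3$ (the case $X\cong\P^2$, with $n=3$ and $\deg=9$, is consistent and can be noted separately, or absorbed as it has no first-kind toric system in the table since $\P^1\times\P^1$ and $\P^2$ share $\deg=8,9$ is not quite right — I'd just remark $\P^2$ is excluded by hypothesis and handled trivially).

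\medskip

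The main obstacle I anticipate is purely bookkeeping: the branching in the inductive search is wide (each $\augm_m$ for $m=1,\dots,n+1$), and one must be disciplined about reducing modulo the dihedral group $\Z/n\Z\rtimes\Z/2\Z$ to keep the case list from exploding, and about the cyclic wraparound in $\augm_1$ and $\augm_{n+1}$ which decreases entries "across the seam." A clean way to control this is to note that in a first-kind admissible sequence of length $n$, the sum $\sum_i a_i = -K_X^2 = \deg X - 12 + (\text{correction})$; more usefully, $\sum_i(a_i+2) = \sum_i a_i + 2n$, and each $\augm_m$ changes $\sum a_i$ by $-1+(-1)+(-1)=-3$ wait, it changes it by $(-1)+(-1)+(-1)= -3$? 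No: it inserts $-1$ and subtracts $1$ twice, so $\sum a_i \mapsto \sum a_i - 2 + (-1) = \sum a_i - 3$, hence $\sum(a_i+2)$ is \emph{invariant}... let me not over-claim; I would verify that $\sum_i(a_i+2)$ is an augmentation invariant equal to its value on the seed, namely $4$ (for $(0,k,0,-k)$: $\sum(a_i+2)=8+k-k\cdot$— hmm, $2+2+k+2+2+(-k+2)$ is wrong too). In any case, some such invariant drastically prunes the search, and I would use it to bound $n$ and to cross-check the table. Once that is in place, the remaining verification is a routine finite check which matches Table~\ref{table_csadm} line by line.
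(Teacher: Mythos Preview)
Your approach is essentially the same as the paper's: both reduce to a finite inductive search over lengths, using that if $\augm_m(a)$ is of the first kind then so is $a$ (this is exactly Lemma~\ref{lemma_bld1kind}), and then check case by case that augmenting the table entries yields nothing new modulo the dihedral action. The paper's proof is terser---it just exhibits one augmentation producing each row and leaves the exhaustive check to the reader---but the logic is identical.

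One small clarification on the invariant you were reaching for: $\sum_i(a_i+2)$ is \emph{not} preserved by $\augm_m$; rather, each augmentation increases $n$ by $1$ and decreases $\sum a_i$ by $3$, so $\sum_i(a_i+2)=12-n$ for every admissible sequence (check it on the seeds). This immediately gives $n\le 12$ from $a_i\ge -2$, and sharpening to $n\le 9$ genuinely requires the case analysis, since at $n=9$ every entry is $-1$ or $-2$ and one verifies no two cyclically adjacent entries are both $-1$, so no further augmentation stays in the first-kind regime.
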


\begin{proof}
In the obvious notation we have
\begin{align*}
(5a)&=\augm_4(\P^1\times \P^1), & 
(5b)&=\augm_3(\mathbb F_1), & 
(6a)&=\augm_2(5a), & 
(6b)&=\augm_1(5b), \\
(6c)&=\augm_4(5a), & 
(6d)&=\augm_4(5b), & 
(7a)&=\augm_5(6a), & 
(7b)&=\augm_2(6b), \\
(8a)&=\augm_2(7a), & 
(8b)&=\augm_8(7a),& 
(8c)&=\augm_6(7b), & 
(9)&=\augm_3(8b).
\end{align*}
The reader is welcome to check that all other augmentations of the listed sequences are either not of the first kind or contained in the table up to some shifts or/and symmetries.
\end{proof}

We remark that the
admissible sequences are exactly the sequences of self-intersection numbers of torus-invariant divisors on toric surfaces. Moreover, by \cite[Prop. 2.5]{HP} the sequence is of the first kind if and only if the corresponding toric surface has nef anti-canonical class. Therefore our Table~\ref{table_csadm} coinsides with Table 1 in \cite{HP} which lists such surfaces.

\section{Operations with toric systems}
\label{section_prelim3}

\subsection{Transpositions and shifts}

Let $(\O_X(D_1),\ldots,\O_X(D_n))$ be an exceptional collection of line bundles on a surface $X$. Recall that the line bundles 
$$O_X(D_k),\O_X(D_{k+1}),\ldots,\O_X(D_l)$$ 
\emph{form a block} if they are completely orthogonal to each other: $\Hom^i(\O_X(D_p),\O_X(D_q))=0$ for any $k\le p<q\le l$ and $i$.
One can reorder bundles in a block and get essentially the same exceptional collection.
Note that for mutually orthogonal $\O_X(D_k),\O_X(D_{k+1})$ the difference $D_{k+1}-D_k=A_k$ is a strong left-orthogonal divisor with $\chi(A_k)=0$. Hence $A_k^{2}=-2$ by Proposition~\ref{prop_D2}. Similarly, for a block of orthogonal line bundles
$O_X(D_k),\O_X(D_{k+1}),\ldots,\O_X(D_l)$ one has a chain $A_k,\ldots,A_{l-1}$ of strong left-orthogonal $(-2)$-classes.
 
On the side of toric systems, transposition of two completely orthogonal line bundles results in the following operation. 
\begin{definition}
Let $A=(A_1,\ldots,A_n)$ be a toric system. Suppose $A_k^2=-2$ for some $k$. Denote
\begin{align*}
\perm_1(A)&=(-A_1,A_1+A_2,A_3,\ldots,A_{n-1},A_n+A_1)\quad\text{if}\quad k=1,\\
\perm_k(A)&=(A_1,\ldots,A_{k-2},A_{k-1}+A_k,-A_k,A_{k}+A_{k+1},A_{k+2},\ldots,A_n)\quad\text{if}\quad 1<k<n,\\
\perm_n(A)&=(A_1+A_n,A_2,\ldots,A_{n-2},A_{n-1}+A_n,-A_n)\quad\text{if}\quad k=n.
\end{align*}
\end{definition}
It is easy to see that $\perm_k(A)$ is also a toric system. If $A$ corresponds to the  numerically exceptional collection $(\O_X(D_1),\ldots,\O_X(D_n))$ then $\perm_k(A)$ corresponds to the numerically exceptional collection 
\begin{align*}
&(\O_X(D_1),\ldots,\O_X(D_{k-1}),\O_X(D_{k+1}),\O_X(D_k),\O_X(D_{k+2}),\ldots,\O_X(D_n))& &\text{for}\quad 1\le k\le n-1,\\
&(\O_X(D_n+K_X),\O_X(D_2),\ldots,\O_X(D_{n-1}),\O_X(D_1-K_X))
& &\text{for}\quad  k=n.
\end{align*}
The operation $\perm_k$ is called $k$-th \emph{transposition}. It is an involution and $\perm_k,\ldots,\perm_l$ define an action of the symmetric group $S_{l-k+2}$ on the set of toric systems $A$ satisfying $A_k^2=\ldots=A_l^2=-2$. A composition of several transpositions will be called a \emph{permutation}. Note also that
\begin{equation}
\label{eq_perm2}
(\perm_k(A))^2=A^2.
\end{equation}

\begin{lemma}
\label{lemma_perm}
Let $X$ be a surface such that the sheaf $\O_X$ is exceptional. Suppose $A=(A_1,\ldots,A_n)$ is a toric system on $X$ and $A_k^2=-2$ for some $k$, $1\le k\le n$. 
\begin{enumerate}
\item If $A$ is exceptional then $\perm_k(A)$ is exceptional if and only if the divisor $A_k$ is strong left-orthogonal.
\item If $k\ne n$ then $A$ is strong exceptional if and only if $\perm_k(A)$ is strong exceptional.
\item  $A$ is cyclic strong exceptional if and only if $\perm_k(A)$ is cyclic strong exceptional.
\end{enumerate}
\end{lemma}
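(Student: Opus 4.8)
\textbf{Proof proposal for Lemma~\ref{lemma_perm}.}

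The plan is to reduce everything to the criterion for exceptionality / strong exceptionality in terms of left-orthogonality of the partial sums $B_{k\ldots l}$ (Proposition~\ref{prop_straightforward}), after recording how the partial sums of $B:=\perm_k(A)$ are related to those of $A$. The key combinatorial observation is this: if $[p,\ldots,q]$ is a cyclic segment of $[1,\ldots,n]$, then $B_{p\ldots q}$ equals $A_{p\ldots q}$ unless exactly one of $p,q$ equals $k$ (in which case $B_{p\ldots q}=A_{p\ldots q}\pm A_k$, i.e.\ it equals $A_{p\ldots q-1}$ or $A_{p+1\ldots q}$ after cancellation, together with $A_k$ itself and $-A_k$), or $[p,\ldots,q]=\{k\}$ (where $B_{p\ldots q}=-A_k$). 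Concretely: for $1<k<n$, the new toric system $B=(A_1,\ldots,A_{k-2},A_{k-1}+A_k,-A_k,A_k+A_{k+1},A_{k+2},\ldots,A_n)$ has the property that its partial sums over cyclic segments are, as \emph{sets of divisors obtained}, exactly the partial sums of $A$ over cyclic segments together with $\{A_k,-A_k\}$; and likewise for $k=1,n$ using the displayed formulas for $\perm_1,\perm_n$. I would state this as a short preliminary claim and verify it by direct inspection of the three cases, using that $A_k^2=-2$ so $A_k\cdot(A_{k-1}+A_{k+1})=2$ and the orthogonality relations of a toric system make all the cancellations work.

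Granting the claim, part (1) follows immediately: by Proposition~\ref{prop_straightforward}(1), $B$ is exceptional iff all $B_{p\ldots q}$ over cyclic segments are left-orthogonal; since $A$ is exceptional, all the $A$-partial-sums are left-orthogonal, so the only possible new obstruction is the divisor $A_k$ (and $-A_k$, but $-A_k$ is left-orthogonal iff $h^i(A_k)=0$ for all $i$, which together with $h^i(-A_k)=0$ for all $i$ — automatic since $A_k$ is left-orthogonal, being $\pm$ a partial sum of the exceptional $A$ — is exactly strong left-orthogonality of $A_k$). Wait: more carefully, $A_k$ being numerically left-orthogonal with $A_k^2=-2$ has $\chi(A_k)=0$, so $A_k$ is left-orthogonal iff $h^i(A_k)=0$ for all $i$, and $-A_k$ is left-orthogonal iff $h^i(-A_k)=0$ for all $i$; both hold simultaneously iff $A_k$ is strong left-orthogonal. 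Since $A$ exceptional already gives left-orthogonality of $-A_k=A_{l+1\ldots k-1}^{\text{complement}}$... here I should instead note directly: $A_k$ appears as the partial sum $A_{k\ldots k}$ which is left-orthogonal because $A$ is exceptional, so $h^i(-A_k)=0$; hence $B$ exceptional $\iff$ $h^i(A_k)=0$ too $\iff$ $A_k$ strong left-orthogonal.

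For (2), with $k\ne n$ I would use Proposition~\ref{prop_straightforward}(2): strong exceptionality is equivalent to strong left-orthogonality of all $B_{p\ldots q}$ with $1\le p\le q\le n-1$. Because $k\ne n$, all the cyclic segments involved are genuine sub-intervals of $[1,\ldots,n-1]$ on both sides, and the claim shows the collection of $B$-partial-sums over $[1,\ldots,n-1]$-segments equals that of $A$ together with $\{A_k,-A_k\}$. But $A_k=A_{k\ldots k}$ and, observing that $-A_k = A_{k+1\ldots k-1}$ is \emph{not} among the $[1,\ldots,n-1]$-partial sums in general — so I must be slightly more careful and check that whenever $-A_k$ shows up as some $B_{p\ldots q}$ with $p,q\le n-1$, strong left-orthogonality of $A_k$ (equivalently of $-A_k$, since both have $\chi=0$ and $A_k$ strong l.o. $\iff$ $h^i(A_k)=h^i(-A_k)=0$) is already forced by strong exceptionality of $A$ applied to the segment $[k,\ldots,k]$. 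This makes the equivalence symmetric, and since $\perm_k$ is an involution, $A$ strong exceptional $\iff$ $\perm_k(A)$ strong exceptional. Part (3) is the same argument with Proposition~\ref{prop_straightforward}(3): cyclic strong exceptionality means strong left-orthogonality over \emph{all} cyclic segments, the claim gives the exact same set of divisors up to $\{A_k,-A_k\}$, and cyclic strong exceptionality of $A$ already contains strong left-orthogonality of $A_{k\ldots k}=A_k$ (hence of $-A_k$); symmetry of $\perm_k$ finishes it.

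The main obstacle I anticipate is the bookkeeping in the preliminary claim, especially handling the ``boundary'' cases $k=1$ and $k=n$ (where the formulas for $\perm_1,\perm_n$ differ and indices wrap around) and verifying that when exactly one endpoint of a cyclic segment is $k$ the telescoping cancellation $A_{p\ldots k}\leadsto A_{p\ldots k-1}$ (plus the isolated $A_k$ or $-A_k$) really accounts for every new partial sum without producing any divisor outside $\{A_k,-A_k\}\cup\{A\text{-partial sums}\}$. This is purely mechanical but must be done for all three position-cases of $k$ and, within each, for segments lying entirely left of $k$, entirely right, straddling $k$, and equal to $\{k\}$; I would organize it as a single table or a couple of lines per case rather than grinding each subcase separately.
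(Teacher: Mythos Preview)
Your proposal is correct and follows the same approach as the paper: reduce everything to Proposition~\ref{prop_straightforward} and use that for a $(-2)$-class $A_k$ one has $A_k$ strong left-orthogonal $\iff$ $-A_k$ strong left-orthogonal (since both mean $h^i(A_k)=h^i(-A_k)=0$, as $\chi(\pm A_k)=0$). The paper's own proof is a single sentence pointing to Proposition~\ref{prop_straightforward} and recording that symmetry; your write-up is essentially the bookkeeping it leaves implicit, and the precise form of your ``preliminary claim'' you want is that the multiset of cyclic partial sums of $\perm_k(A)$ equals that of $A$ with $A_k$ replaced by $-A_k$ (not ``together with $\{A_k,-A_k\}$'').
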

\begin{proof}
It follows easily from Proposition~\ref{prop_straightforward}. Note also that $A_k$ is strong left-orthogonal if and only if $-A_k$ is strong left-orthogonal. Indeed, both mean that $h^i(A_k)=h^i(-A_k)=0$ for $i=0,1,2$ as $\chi(A_k)=\chi(-A_k)=0$ for a $(-2)$-class.
\end{proof}

We introduce also the natural operation of cyclic shift:
$$\sh((A_1,\ldots,A_n))= (A_2,A_3,\ldots,A_n,A_1).$$
It corresponds to the following operation with collections of line bundles:
$$(\O_X(D_1),\ldots,\O_X(D_n))\Longrightarrow (\O_X(D_2),\ldots,\O_X(D_n),\O_X(D_{n+1})),$$
where $D_{n+1}=D_1-K_X$.

It follows from Proposition~\ref{prop_straightforward} that cyclic shift preserves exceptional and  cyclic strong exceptional toric systems but may not preserve strong exceptional toric systems.

\subsection{Augmentations}
\label{subsection_augm}

Following Hille and Perling \cite{HP}, we define augmentations. They provide a wide class of explicitly constructed toric systems. 

\begin{definition}
\label{def_elemaugm}
Let $A'=(A'_1,\ldots,A'_n)$ be a toric system on a surface $X'$, and let $p\colon X\to X'$ be the blow up of a point with the exceptional divisor $E\subset X$. Denote $A_i=p^*A'_i$. Then one has the following toric systems on $X$:
\begin{align*}
\augm_{p,1}(A')=&(E,A_1-E,A_2,\ldots, A_{n-1},A_n-E);\\
\augm_{p,m}(A')=&(A_1,\ldots,A_{m-2}, A_{m-1}-E,E,A_{m}-E,A_{m+1},\ldots,A_n)\quad\text{for}\quad 2\le m\le n;\\
\augm_{p,n+1}(A')=&(A_1-E,A_2,\ldots, A_{n-1},A_n-E,E).
\end{align*}
The toric systems $\augm_{p,m}(A')$ ($1\le m\le n+1$) are called   \emph{elementary augmentations} of the toric system~$A'$.
\end{definition}

\begin{lemma}
\label{lemma_bld1kind}
Augmentations of toric systems and sequences of integers are compatible: for any toric system $A=(A_1,\ldots,A_n)$ on a surface and any $1\le k\le n$ one has
$$(\augm_k(A))^2=\augm_k(A^2).$$
Moreover, if $\augm_k(A)$ is of the first kind (see Definition~\ref{def_1kind}) then $A$ also is.
\end{lemma}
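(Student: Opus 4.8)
The statement is \emph{Lemma~\ref{lemma_bld1kind}}: for a toric system $A=(A_1,\ldots,A_n)$ and $1\le k\le n$ (really $1\le k\le n+1$, to match Definition~\ref{def_elemaugm}), one has $(\augm_k(A))^2=\augm_k(A^2)$, and if $\augm_k(A)$ is of the first kind then so is $A$. The plan is to verify the first identity by a direct entry-by-entry comparison, using that $p^*$ is an isometry and that $E^2=-1$, $E\cdot p^*D=0$ for every divisor $D$ on $X'$. The second assertion is then a formal consequence of the first.

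First I would treat the middle case $2\le m\le n$, writing $A_i=p^*A'_i$. By Definition~\ref{def_elemaugm} the augmented toric system is
$$\augm_{p,m}(A')=(A_1,\ldots,A_{m-2},\,A_{m-1}-E,\,E,\,A_m-E,\,A_{m+1},\ldots,A_n).$$
Squaring term by term and using $E^2=-1$, $A_i\cdot E=p^*A'_i\cdot E=0$, and $A_i^2=(p^*A'_i)^2=(A'_i)^2$, I get the self-intersection sequence
$$((A'_1)^2,\ldots,(A'_{m-2})^2,\,(A'_{m-1})^2-1,\,-1,\,(A'_m)^2-1,\,(A'_{m+1})^2,\ldots,(A'_n)^2),$$
which is exactly $\augm_m((A')^2)=\augm_m(A^2)$ by the definition of the $m$-th elementary augmentation of a sequence of integers in Section~\ref{section_adm}. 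The two boundary cases $m=1$ and $m=n+1$ are handled identically: for instance $\augm_{p,1}(A')=(E,A_1-E,A_2,\ldots,A_{n-1},A_n-E)$ has self-intersection sequence $(-1,(A'_1)^2-1,(A'_2)^2,\ldots,(A'_{n-1})^2,(A'_n)^2-1)=\augm_1(A^2)$, and similarly for $m=n+1$.

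For the second claim, suppose $\augm_k(A)$ is of the first kind, i.e.\ every entry of $(\augm_k(A))^2=\augm_k(A^2)$ is $\ge -2$. Writing $A^2=(a_1,\ldots,a_n)$, inspection of the three formulas for $\augm_k(A^2)$ shows that each $a_i$ appears in $\augm_k(A^2)$ either unchanged or decreased by $1$ (namely $a_{k-1}$ and $a_k$, with appropriate cyclic conventions at the boundary, are each replaced by $a_{k-1}-1$, $a_k-1$, and a new entry $-1$ is inserted). Hence $a_i\ge a_i-1\ge -2$ for the shifted entries and $a_i\ge -2$ for the others, so every $a_i\ge -2$ and $A$ is of the first kind. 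There is essentially no obstacle here — the only thing to be careful about is keeping the cyclic indexing straight in the boundary cases $k=1$ and $k=n+1$, and making sure the bookkeeping matches the definition of $\augm_m$ on integer sequences exactly; the geometric input ($p^*$ isometric, $E^2=-1$, $E\perp p^*\Pic(X')$) is entirely standard.
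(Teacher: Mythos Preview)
Your proof is correct and is exactly the direct unwinding of the definitions that the paper intends; the paper's own proof is simply ``Follows from the definitions.'' Your entry-by-entry computation using $E^2=-1$, $E\cdot p^*D=0$, and $(p^*D)^2=D^2$ is the right justification, and the second claim is indeed the trivial observation that each $a_i$ appears in $\augm_k(A^2)$ either as $a_i$ or as $a_i-1$.
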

\begin{proof}
Follows from the definitions.
\end{proof}

\begin{predl}[{See \cite[Proposition 3.3]{EL}}]
\label{prop_elemaugm}
In the notation of Definition~\ref{def_elemaugm}, let $A$ be a toric system on a surface $X$ such that $A_m=E$ for some $m$. Then $A=\augm_{p,m}(A')$ for some toric system $A'$ on $X'$.
\end{predl}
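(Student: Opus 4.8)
The plan is to show that the existence of an index $m$ with $A_m=E$ forces the toric system $A$ to have exactly the shape produced by one of the three elementary augmentation formulas in Definition~\ref{def_elemaugm}, with the ``base'' toric system $A'$ on $X'$ recovered by pushing forward the remaining classes. First I would recall the structure of $\Pic(X)$ for the blow-up $p\colon X\to X'$: there is an orthogonal decomposition $\Pic(X)=p^*\Pic(X')\oplus\Z E$ with $E^2=-1$, $E\cdot p^*D=0$, and $K_X=p^*K_{X'}+E$. Any class $B\in\Pic(X)$ can thus be written uniquely as $B=p^*\bar B+(B\cdot E)(-E)$, i.e. $B+(B\cdot E)E$ lies in $p^*\Pic(X')$; write $\bar B$ for the corresponding class on $X'$.

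The key computation is to determine the coefficient of $E$ in each $A_i$. Since $A_m=E$, the toric system axioms give $A_{m-1}\cdot E=A_{m-1}\cdot A_m=1$ and $A_{m+1}\cdot E=A_m\cdot A_{m+1}=1$, while $A_i\cdot E=A_i\cdot A_m=0$ for all other $i$ (indices mod $n$). Moreover $\sum_i A_i=-K_X=-p^*K_{X'}-E$, so $\sum_{i\ne m}A_i=-p^*K_{X'}-2E$. Therefore, setting $A'_i:=\overline{A_i}$ for $i\ne m$, we get $A_i=p^*A'_i$ for $i\notin\{m-1,m,m+1\}$, $A_{m-1}=p^*A'_{m-1}-E$, $A_{m+1}=p^*A'_{m+1}-E$, and $\sum_{i\ne m}A'_i=-K_{X'}$ after pushing forward. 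This is precisely the list $\augm_{p,m}(A')$ once we check the three boundary cases ($2\le m\le n$ versus $m=1$, $m=n+1$; note $m$ is a residue mod $n$, so "$m=n+1$" means $A_{m-1}=A_n$, $A_{m+1}=A_1$ sit on opposite ends, matching the first/third formulas after a relabelling). The only remaining point is that $A'=(A'_1,\ldots,A'_n)$ (with the $m$-th slot deleted) is genuinely a toric system on $X'$: the intersection numbers $A'_i\cdot A'_j$ are computed from $A_i\cdot A_j$ using orthogonality of $p^*$ and the known $E$-coefficients — e.g. $A'_{m-1}\cdot A'_{m+1}=(A_{m-1}+E)\cdot(A_{m+1}+E)=A_{m-1}\cdot A_{m+1}+1+1-1=1$, since $A_{m-1}\cdot A_{m+1}=0$, so the two neighbours of the deleted slot become adjacent with intersection $1$, exactly as a toric system of length $n-1$ on $X'$ (which has $\rank K_0(X')_{num}=n-1$) requires; all other intersections are inherited directly.

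I would organize the write-up as: (i) fix notation for $\Pic(X)=p^*\Pic(X')\perp\Z E$ and the formula $K_X=p^*K_{X'}+E$; (ii) extract the $E$-coefficients of all $A_i$ from the toric-system relations with $A_m=E$; (iii) define $A'_i$ by push-forward and verify the toric-system axioms on $X'$ using the intersection computations above; (iv) read off that $A=\augm_{p,m}(A')$, handling the cyclic labelling so that the cases $m=1$ and $m=n+1$ (equivalently $m\equiv 0$) match the first and third formulas of Definition~\ref{def_elemaugm}. I expect the main obstacle to be purely bookkeeping: getting the cyclic index conventions to line up between Definition~\ref{def_elemaugm} (which uses a fixed linear range $1\le m\le n+1$) and the fact that in a toric system indices live modulo $n$, so that, depending on where $m$ falls, the "$-E$" corrections land on the first and last entries rather than on an adjacent pair. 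There is no real mathematical difficulty — everything follows from orthogonality of the blow-up decomposition and the defining relations of a toric system — so the proof can legitimately be the one-line "Follows from the definitions and Proposition~\ref{prop_elemaugm}" style, but I would prefer to spell out step (ii) since it is the crux.
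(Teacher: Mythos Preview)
Your argument is correct and is the natural direct verification. Note, however, that the paper does not actually supply its own proof of this proposition: it simply cites \cite[Proposition~3.3]{EL}. So there is nothing in the present paper to compare your proof against beyond the statement itself; your write-up is exactly the kind of argument one would expect to find in the cited reference.
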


\begin{definition}
\label{def_augm1}
A toric system $A$ on a rational surface $X$ is called a \emph{standard augmentation} if~$X$ is a Hirzebruch surface or $A$ is an elementary augmentation of some standard augmentation. Equivalently: $A$ is a standard augmentation if there exists a chain of blow-ups
$$X=X_n\xra{p_n} X_{n-1}\to \ldots X_1\xra{p_1} X_0$$
where $X_0$ is a Hirzebruch surface and 
$$A=\augm_{p_n,k_n}(\augm_{p_{n-1},k_{n-1}}(\ldots \augm_{p_1,k_1}(A')\ldots ))$$
for some $k_1,\ldots,k_n$ and a toric system $A'$ on $X_0$. In this case we will say that $A$ is a \emph{standard augmentation along the chain $p_1,\ldots,p_n$}.
\end{definition}

\begin{remark}
To be more accurate, one  should add that (the unique) toric system $(L,L,L)$ on $\P^2$ is also considered as a standard augmentation. To simplify the forthcoming definitions and statements, we will ignore this issue.
\end{remark}

\begin{predl}
\label{prop_augmexc}
Let $X'$ be a surface such that the sheaf $\O_{X'}$ is exceptional, let $p\colon X\to X'$ be a blow-up of a point. Assume $A'$ is a toric system on $X'$ and $A=\augm_k(A')$. Then
\begin{enumerate}
\item $A$ is exceptional if and only if $A'$ is exceptional;
\item if $A$ is strong exceptional then $A'$ is strong exceptional;
\item if $A$ is cyclic strong exceptional then $A'$ is cyclic strong exceptional.
\end{enumerate}
\end{predl}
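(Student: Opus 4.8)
The plan is to reduce the statement to the cohomological criteria of Proposition~\ref{prop_straightforward}, by comparing the partial sums $A_{k\ldots l}$ of the augmented system $A=\augm_k(A')$ with the partial sums $A'_{k'\ldots l'}$ of the original system $A'$ on $X'$. Throughout, write $E$ for the exceptional divisor of $p\colon X\to X'$, and recall that by Definition~\ref{def_elemaugm} the new system inserts the block $(A'_{m-1}-E,\,E,\,A'_m-E)$ (with pullbacks understood) in place of $A'_{m-1},A'_m$; the analogous formulas hold for the endpoint cases $m=1$ and $m=n+1$. For a cyclic segment $[k,\ldots,l]$ of the index set of $A$, the sum $A_{k\ldots l}$ equals either the pullback $p^*A'_{k'\ldots l'}$ of a sum on $X'$, or such a pullback twisted by $-E$, or $E$ itself, or a pullback twisted by $+E$ — depending on how the segment meets the inserted block. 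Since $p$ is a blow-up of a point, one has $p_*\O_X=\O_{X'}$, $R^ip_*\O_X=0$ for $i>0$, and $H^i(X,p^*\FF)=H^i(X',\FF)$ for all $i$; moreover $H^0(X,p^*\FF(-E))=H^0(X',\FF\otimes \mathfrak m_P)\subset H^0(X',\FF)$ and $H^i(X,p^*\FF(-E))$ fits into a long exact sequence with $H^i(X',\FF)$ and $H^i(E,\O_E)=H^i(\P^1,\O)$. This is the standard toolbox and is exactly what makes the comparison possible.

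For part (1), the ``only if'' direction uses Proposition~\ref{prop_augmexc} only implicitly: if $A$ is exceptional then every $A_{k\ldots l}$ over a cyclic segment not equal to the whole index set is left-orthogonal by Proposition~\ref{prop_straightforward}(1); specializing to segments that avoid the inserted $E$-term and lie inside the range corresponding to a cyclic segment of $A'$, and using $H^i(X,p^*(-D))=H^i(X',-D)$, we read off left-orthogonality of every $A'_{k'\ldots l'}$, hence $A'$ is exceptional. For ``if'': assume $A'$ is exceptional and take any cyclic segment $[k,\ldots,l]$ of the index set of $A$ (with $[k,\ldots,l]\neq [1,\ldots,n+1]$). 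Case analysis on how $[k,\ldots,l]$ meets the inserted block $(\ast,E,\ast)$: (a) it is disjoint from the new terms — then $A_{k\ldots l}=p^*A'_{k'\ldots l'}$ for a proper cyclic segment of $A'$, left-orthogonal by hypothesis, and pullback preserves all cohomology; (b) it contains exactly one of the two twisted terms $A'_{m-1}-E$ or $A'_m-E$ but not $E$ — then $A_{k\ldots l}=p^*D'-E$ where $D'$ is a partial sum on $X'$, and one checks $h^i(-(p^*D'-E))=h^i(p^*(-D')+E)$ vanishes using the sequence $0\to p^*(-D')\to p^*(-D')+E\to \O_E(\ast)\to 0$ together with left-orthogonality of $D'$ on $X'$ and a degree count of the restriction to $E\cong\P^1$; (c) it contains $E$ and exactly one neighbor — similar, the sum collapses to a pullback (the $\pm E$ cancel) or to a pullback-minus-$E$ and one reuses case (a)/(b); (d) it contains all three new terms $A'_{m-1}-E,E,A'_m-E$ — then $A_{k\ldots l}=p^*(A'_{m-1}+A'_m+\text{(rest)})-E=p^*D'-E$ with $D'$ again a proper partial sum of $A'$, handled as in (b); (e) the boundary cases where the segment is $\{E\}$ itself ($A_{k\ldots l}=E$, and $-E$ is not effective, $h^0(\O_E(-1))=h^1(\O_E(-1))=0$) or a single twisted term $A'_{m-1}-E$ ($=p^*A'_{m-1}-E$ with $A'_{m-1}$ left-orthogonal on $X'$), again of type (b). In every case $A_{k\ldots l}$ is left-orthogonal, so $A$ is exceptional by Proposition~\ref{prop_straightforward}(1).

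For parts (2) and (3), the same segment bookkeeping applies, now tracking strong left-orthogonality (i.e.\ also $h^i(D)=0$ for $i\neq 0$) instead of mere left-orthogonality. Here one uses Proposition~\ref{prop_straightforward}(2),(3): $A$ strong exceptional forces every $A_{k\ldots l}$ with $1\le k\le l\le n$ (in the index range of $A$) to be strong left-orthogonal, and restricting to segments of type (a) — pure pullbacks $p^*A'_{k'\ldots l'}$ with $k'\le l'$ in the relevant range — gives strong left-orthogonality of the corresponding sums on $X'$, so $A'$ is strong exceptional. The cyclic statement (3) is identical but now we are allowed to use arbitrary cyclic segments (Proposition~\ref{prop_straightforward}(3)), which makes the reduction even cleaner: every proper cyclic segment of $A'$'s index set is reached from a type-(a) cyclic segment of $A$'s index set. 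In (2) and (3) one must also verify that the block terms themselves behave well, e.g.\ $E$ is strong left-orthogonal (immediate: $h^i(\O_X(E))$ and $h^i(\O_X(-E))$ vanish for $i\neq 0$ since $p_*\O_X(E)=\O_{X'}$, $R^1p_*\O_X(E)=0$, and $\O_X(-E)$ has no sections), but these do not impose conditions back on $X'$.

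\textbf{Main obstacle.} The routine-but-delicate part is the exhaustive case analysis in the ``if'' directions: a cyclic segment of the augmented index set can straddle the inserted block in several inequivalent ways (including the three endpoint variants $m=1$, $m=n+1$, and the sub-cases where the segment wraps around cyclically through the block), and in each sub-case one must correctly identify $A_{k\ldots l}$ as a pullback, a pullback twisted by $\pm E$, or $E$, and then invoke the corresponding cohomology comparison. Getting the endpoint cases $\augm_{p,1}$ and $\augm_{p,n+1}$ to fit the same pattern — where the ``inserted'' term $E$ sits at the boundary and the twists land on $A_1$ and $A_n$ — is where one has to be most careful; everything else is a mechanical consequence of $p^*$ being fully faithful on cohomology and the short exact sequence $0\to p^*\FF\to p^*\FF(E)\to \O_E(-1)\otimes(\cdots)\to 0$ (equivalently its dual). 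No genuinely new idea is needed beyond Propositions~\ref{prop_straightforward} and the blow-up cohomology facts already available.
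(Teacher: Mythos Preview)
Your approach is correct in spirit and essentially self-contained, whereas the paper's proof is not: for (1) and (2) the paper simply cites \cite[Prop.~2.21]{EL}, and for (3) it observes that cyclic shifts commute with elementary augmentation (so that $\sh^j(\augm_k(A'))=\augm_{k'}(\sh^{j'}(A'))$ for suitable $k',j'$), then applies (2) to each shifted pair. Your direct cohomological comparison via Proposition~\ref{prop_straightforward} and the blow-up exact sequences is presumably close to what \cite{EL} actually does, so you are in effect reconstructing the cited argument. For part (3) the paper's shift-commutation trick is cleaner: once (2) is known, (3) follows in one line without repeating any segment bookkeeping.

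One small omission in your case list for the ``if'' direction of (1): you enumerate (a)--(e) but do not cover the cyclic segment containing \emph{both} twisted terms $A'_{m-1}-E$ and $A'_m-E$ while omitting $E$, i.e.\ the complement of the singleton $\{m\}$. Its sum is $p^*(-K_{X'})-2E=-K_X-E$, and one must verify $h^i(K_X+E)=0$, which follows from Serre duality and $h^i(\O_X(-E))=0$. Alternatively, you can sidestep this entirely by using the non-cyclic form of Proposition~\ref{prop_straightforward}(1) (segments $1\le k\le l\le n$) after a preliminary cyclic shift to arrange $2\le m\le n$; in that range a non-wrapping segment containing both positions $m-1$ and $m+1$ necessarily contains $m$, so the missing case does not occur. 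Either fix is routine and does not affect the overall strategy.
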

\begin{proof}
(1) and (2) are in  \cite[Prop. 2.21]{EL}. For (3), note that cyclic shifts commute with elementary augmentation and use (2) and Definition~\ref{def_exts}.
\end{proof}

An exceptional collection of line bundles is called a \emph{standard augmentation} if the associated toric system is a standard augmentation. One of main results in \cite{HP}
is that any strong exceptional collection of line bundles on a toric surface comes from a standard augmentation. But this ``comes from'' does not literally means ``is'' (as this is false, see Example~\ref{example_bad} below). Instead it means (see \cite[Theorem 8.1]{HP}) that such collection has a \emph{normal form} which is a standard augmentation. This normal form is obtained from the original collection by reordering bundles which are mutually orthogonal. In other words, the toric system associated to the normal form is obtained from the original toric system by transpositions. 

Instead of using normal forms we prefer to introduce the following two definitions that we find more convenient.

\begin{definition}
\label{def_augm2}
A toric system $A$ on a rational surface $X$ is called an \emph{augmentation in the weak sense} 
if~$A$ can be obtained from a toric system on a Hirzebruch surface by several transpositions, cyclic shifts and elementary augmentations  (applied in any order). 
\end{definition}

\begin{definition}
\label{def_augm4}
An exceptional (resp. strong exceptional, cyclic strong exceptional) toric system $A=(A_1,\ldots,A_n)$ on a rational surface $X$ is called an \emph{exceptional (resp. strong exceptional, cyclic strong exceptional) augmentation} if $X$ is a Hirzebruch surface or $A$ can be obtained by a transposition, cyclic shift or an elementary augmentation from some toric system which is an exceptional (resp. strong exceptional, cyclic strong exceptional) augmentation.
\end{definition}

\begin{remark}
If a toric system is a standard augmentation, it is an augmentation in the weak sense. 
\end{remark}

\begin{remark}
In the both above  definitions, a sequence 
$$A^{(0)}, A^{(1)}, \ldots, A^{(m)}=A$$ 
of toric systems is required to exist, where $A^{(0)}$ is a toric system on a Hirzebruch surface and any $A^{(k)}$ is obtained from $A^{(k-1)}$ by one of the operations: elementary augmentation, cyclic shift or transposition. Suppose $A$ is exceptional, then $A^{(k)}$-s are not automatically exceptional. The difference between Definitions~\ref{def_augm2} and \ref{def_augm4} is that in 
Definition~\ref{def_augm4} all intermediate toric systems $A^{(k)}$ are required to be exceptional while in 
Definition~\ref{def_augm2} they are not.
\end{remark}

Next Proposition follows from results of  \cite{EL}.
\begin{predl}
\label{prop_full}
Let $(\O_X(D_1),\ldots , \O_X(D_n))$ be an exceptional collection of line bundles of maximal length on a rational surface $X$. Suppose that the corresponding toric system $A$ is an exceptional augmentation. Then this collection is full.
\end{predl}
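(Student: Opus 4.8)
The strategy is to induct on the number of elementary augmentations in the chain exhibiting $A$ as an exceptional augmentation, reducing the fullness of the collection on $X$ to fullness on a Hirzebruch surface (where it is classical). First I would observe that fullness of an exceptional collection of line bundles of maximal length is invariant under the two ``reshuffling'' operations allowed in an exceptional augmentation: a cyclic shift replaces $(\O_X(D_1),\ldots,\O_X(D_n))$ by $(\O_X(D_2),\ldots,\O_X(D_n),\O_X(D_1-K_X))$, and since $\O_X(D_1-K_X)$ lies in the subcategory generated by $\O_X(D_1),\ldots,\O_X(D_n)$ by Serre duality considerations (or directly, helices generate the same category), the two collections generate the same triangulated subcategory; a transposition $\perm_k$ only reorders two completely orthogonal bundles, hence obviously preserves the generated subcategory. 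So it suffices to treat the case where $A$ is obtained from an exceptional augmentation $A'$ on $X'$ by a single \emph{elementary} augmentation $A=\augm_{p,m}(A')$, with $p\colon X\to X'$ the blow-up of a point.

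For the inductive step, the key input is the semiorthogonal decomposition $\D^b(\coh X)=\langle \O_E(-1), Lp^*\D^b(\coh X')\rangle$ coming from Orlov's blow-up formula (here $E$ is the exceptional divisor, $E^2=-1$). By Proposition~\ref{prop_augmexc}(1), $A'$ is exceptional, so the collection $(\O_{X'}(D'_1),\ldots,\O_{X'}(D'_n))$ attached to $A'$ is an exceptional collection of line bundles; it has maximal length on $X'$ because $\rank K_0(X')=\rank K_0(X)-1=n-1$ and one checks the number of terms drops by one. By the inductive hypothesis this collection is full on $X'$, so its pullbacks $\O_X(p^*D'_i)$ generate $Lp^*\D^b(\coh X')$. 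Now I would identify the line bundles appearing in the collection attached to $A=\augm_{p,m}(A')$: they are, up to the reshuffling already handled, the pullbacks $\O_X(p^*D'_i)$ together with one extra line bundle that is a twist of $\O_X(E)$ coming from the inserted term $E$ in the augmented toric system. Since $\O_E(-1)$ is generated by $\O_X$ and $\O_X(-E)$ (from $0\to\O_X(-E)\to\O_X\to\O_E\to 0$ twisted appropriately), and both $\O_X$ (a pullback) and the extra $\O_X(E)$-twist are in our collection, together with all the pullbacks we generate both pieces of the semiorthogonal decomposition, hence all of $\D^b(\coh X)$.

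The base case is $X$ a Hirzebruch surface: a toric system of maximal length on $\mathbb F_d$ has $n=4$, and any exceptional collection of four line bundles of maximal length on $\mathbb F_d$ is full --- this is classical (e.g.\ it follows from the standard computation of $\D^b(\coh \mathbb F_d)$ via the $\P^1$-bundle structure, or from the cited results of \cite{EL}). The main obstacle I anticipate is the bookkeeping in the inductive step: one must check carefully that the line bundles of the collection attached to $\augm_{p,m}(A')$ really do consist of the pullbacks of the collection attached to $A'$ plus a single $\O_X(E)$-twist (up to cyclic shift, which is needed precisely to move the inserted $E$ into a convenient spot), and that maximal length is preserved under augmentation so that the inductive hypothesis applies. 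This is exactly the content that \cite{EL} supplies, so I would cite Proposition~3.3 of \cite{EL} and the surrounding discussion for the precise matching of line bundles, and otherwise the argument is a clean induction on Picard rank using Orlov's blow-up decomposition.
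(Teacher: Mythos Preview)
Your approach is essentially the same as the paper's: both proceed by induction along the chain of operations in Definition~\ref{def_augm4}, checking that fullness is preserved by cyclic shifts, transpositions, and elementary augmentations, with the Hirzebruch case as the base. Your treatment of the elementary augmentation step via Orlov's blow-up decomposition is exactly what underlies the result \cite[Prop.~2.21(2)]{EL} that the paper cites, so there is no real difference there.

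There is one small gap. You assert that ``a transposition $\perm_k$ only reorders two completely orthogonal bundles,'' but this is false for $k=n$: by the formula preceding Lemma~\ref{lemma_perm}, $\perm_n$ sends the collection to $(\O_X(D_n+K_X),\O_X(D_2),\ldots,\O_X(D_{n-1}),\O_X(D_1-K_X))$, which is \emph{not} a reordering of the original line bundles. The paper handles this case with the identity $\perm_n=\sh\circ\perm_1\circ\sh^{-1}$, reducing it to the cyclic shift case already treated. You should add this one line. Relatedly, your cyclic shift argument (``$\O_X(D_1-K_X)$ lies in the subcategory generated by the original bundles'') only shows one inclusion; to get that the shifted collection is full you need the mutation/helix statement you allude to parenthetically, namely that for a full exceptional collection the right mutation of $\O_X(D_1)$ through the rest is $\O_X(D_1-K_X)$ up to shift. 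The paper states this explicitly as ``a cyclic shift of a full exceptional collection is the mutation of the first object to the last position.''
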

\begin{proof}
According to Definition~\ref{def_augm4}, there exists a sequence
$$A^{(0)}, A^{(1)}, \ldots, A^{(m)}=A$$ 
of exceptional toric systems such that $A^{(0)}$ is a system on a Hirzebruch surface and any $A^{(k)}$ is obtained from $A^{(k-1)}$ by a cyclic shift, a transposition or an elementary augmentation. Thus we have to check the following: 
\begin{enumerate}
\item any exceptional toric system on a Hirzebruch surface is full;
\item any cyclic shift of a full exceptional toric system is full;
\item any transposition of a full exceptional toric system is full;
\item any elementary augmentation of a full exceptional toric system is full.
\end{enumerate}
Propositions 2.16, 2.19 in \cite{EL} imply (1). Statement (2) holds because a cyclic shift of a full exceptional collection is the mutation of the first object to the last position. For (3), note that $\perm_k(A_1,\ldots,A_n)$ does not change the set of line bundles in the corresponding exceptional collections for $1\le k\le n-1$. For $k=n$ use (2) and the equality $\perm_n=\sh\circ\perm_1\circ\sh^{-1}$. Finally, (4) is 	\cite[Prop. 2.21(2)]{EL}.
\end{proof}

Therefore Conjecture~\ref{conj_augm} implies that any strong exceptional collection of line bundles of maximal length is full.

\begin{lemma}
\label{lemma_cseweak=mild}
Let $X$ be a rational surface and $A$ be a cyclic strong exceptional toric system on~$X$. Then $A$ is an augmentation in the weak sense if and only if $A$ is a cyclic strong exceptional augmentation. 
\end{lemma}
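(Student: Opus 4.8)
The plan is to prove the two implications separately; the ``if'' part is immediate, and the ``only if'' part carries the content. For ``if'': unravelling Definition~\ref{def_augm4}, a cyclic strong exceptional augmentation $A$ is the last term of a chain $A^{(0)},A^{(1)},\ldots,A^{(m)}=A$ in which $A^{(0)}$ is a toric system on a Hirzebruch surface and each $A^{(k)}$ arises from $A^{(k-1)}$ by a transposition, a cyclic shift, or an elementary augmentation. This same chain witnesses, via Definition~\ref{def_augm2}, that $A$ is an augmentation in the weak sense; no homological input is needed for this direction.

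For ``only if'', suppose $A$ is cyclic strong exceptional and fix a chain $A^{(0)},\ldots,A^{(m)}=A$ as in Definition~\ref{def_augm2}, so that $A^{(0)}$ lives on a Hirzebruch surface and each step is a transposition, a cyclic shift, or an elementary augmentation. Every surface occurring in the chain is rational, so its structure sheaf is exceptional and the results invoked below apply. The key claim is that \emph{every} $A^{(k)}$ is cyclic strong exceptional; I would prove this by downward induction on $k$, the base case $k=m$ being the hypothesis. For the inductive step, inspect the operation carrying $A^{(k-1)}$ to $A^{(k)}$. If it is a cyclic shift, then $A^{(k-1)}$ and $A^{(k)}$ have the same set of sums $A_{k\ldots l}$ over cyclic segments, so by Proposition~\ref{prop_straightforward}(3) one is cyclic strong exceptional exactly when the other is; hence $A^{(k-1)}$ is. If it is a transposition $\perm_j$ (which forces $(A^{(k-1)})_j^2=-2$), then the $j$-th entry of $A^{(k)}=\perm_j(A^{(k-1)})$ equals $-(A^{(k-1)})_j$, again a $(-2)$-class, so $\perm_j$ is defined on $A^{(k)}$; being an involution it gives $A^{(k-1)}=\perm_j(A^{(k)})$, and Lemma~\ref{lemma_perm}(3) then yields that $A^{(k-1)}$ is cyclic strong exceptional. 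If the step is an elementary augmentation $A^{(k)}=\augm_{p,j}(A^{(k-1)})$, then Proposition~\ref{prop_augmexc}(3) gives directly that $A^{(k-1)}$ is cyclic strong exceptional.

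With every $A^{(k)}$ now known to be cyclic strong exceptional, a short upward induction finishes the argument: $A^{(0)}$ is a cyclic strong exceptional toric system on a Hirzebruch surface, hence a cyclic strong exceptional augmentation by Definition~\ref{def_augm4}; and whenever $A^{(k-1)}$ is a cyclic strong exceptional augmentation, so is $A^{(k)}$, since it is cyclic strong exceptional and obtained from $A^{(k-1)}$ by one of the three permitted operations. Taking $k=m$ shows $A$ is a cyclic strong exceptional augmentation. I do not anticipate a real obstacle: the proof is bookkeeping on top of three facts already in hand --- invariance of cyclic strong exceptionality under cyclic shift, Lemma~\ref{lemma_perm}(3) for transpositions, and Proposition~\ref{prop_augmexc}(3) for elementary augmentations --- and the only delicate point is the transposition step, where one must check that the relevant entry stays a $(-2)$-class so that $\perm_j$ is legitimately applicable to $A^{(k)}$ and its involutivity can be used.
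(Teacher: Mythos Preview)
Your proof is correct and follows essentially the same approach as the paper: both argue that each intermediate toric system in the chain is cyclic strong exceptional by working backward from $A$, using Lemma~\ref{lemma_perm}(3) for transpositions, the invariance under cyclic shifts, and Proposition~\ref{prop_augmexc}(3) for elementary augmentations, and then conclude via Definition~\ref{def_augm4}. Your version is a bit more explicit (spelling out the involutivity of $\perm_j$ and the two inductions), but the argument is the same.
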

\begin{proof}
``If'' is trivial. For ``only if'', suppose  $A=t_k\circ\ldots\circ t_1(A')$ where $A'$ is a toric system on a Hirzebruch surface and any $t_i$ is either $\perm, \sh$ or $\augm$. Recall that $\perm$ and $\sh$ preserve cyclic strong exceptional toric systems (Lemma~\ref{lemma_perm}(3) and a remark after it). By Proposition~\ref{prop_augmexc}, if $\augm_k(B)$ is cyclic strong exceptional then so is $B$. Therefore all toric systems  $t_l\circ\ldots\circ t_1(A')$ and $A'$ where $1\le l\le k$ are cyclic strong exceptional and $A$ is a cyclic strong exceptional augmentation by definition. 
\end{proof}

Hille and Perling in their original paper \cite[Theorem 8.1]{HP} proved that any strong exceptional toric system on a toric surface is a strong exceptional augmentation. 
In \cite[Theorem 1.4]{EL} it is proved that any toric system on a del Pezzo surface is a standard augmentation. Hochenegger and Ilten in \cite{HI} prove that any exceptional toric system on a toric surface of Picard rank $\le 4$ is a standard augmentation. 

\begin{remark}
\label{remark_HI}
Hochenegger and Ilten  provide examples of a strong exceptional toric system $A=(A_1,\ldots,A_7)$ on a weak del Pezzo toric surface of Picard rank $5$ such that $A$ is not an elementary augmentation, see \cite[Example 5.6]{HI} and \cite[Section 4]{Ho}. We remark that the transposition $\perm_1(A)=(-A_1,A_1+A_2,A_3,\ldots,A_6,A_7+A_1)$ is also a strong exceptional toric system and  is an elementary (and moreover standard) augmentation. Therefore the cited examples do not contradict to  Conjecture~\ref{conj_augm}. 
\end{remark}

Here we give another  example demonstrating that the use of transpositions is necessary. 
Moreover, one cannot get a  ``normal form'' only by reordering of line bundles in the collection. Therefore the use of cyclic shifts (or of transpositions affecting the last term) is necessary.

\begin{example}
\label{example_bad}
Let $X=X_{4,2A_1,8}$ be the weak del Pezzo surface of degree $4$ of type $2A_1$ with $8$ lines. Explicitly, let $P_1,P_2,P_3,P_4\in\P^2$ be points and $H\subset \P^2$ be a line such that $P_1,P_2,P_3\in H$, $P_4\notin H$. Let $X'$ be the blow-up of $P_1,P_2,P_3,P_4$ with exceptional divisors $E_1,E_2,E_3,E_4$. Let $P_5\in E_4$ be a general point and $X$ be the blow-up of $P_5$.  Consider the following toric system on $X$ (recall that $E_{k_1,k_2,\ldots,k_m}$ denotes $E_{k_1}+E_{k_2}+\ldots+E_{k_m}$):
$$A=(A_1,\ldots,A_8)=(L-E_{145}, E_4, L-E_{234}, L-E_5, E_5-E_1, L-E_{35}, E_3-E_2, -L+E_{125}).
$$
We have 
$$A^2=(-2,-1,-2,0,-2,-1,-2,-2),$$
hence $A$ is of the first kind, type (8c) from Table~\ref{table_csadm}.

There are $8$ irreducible $(-1)$-classes on $X$, see \cite[Prop. 6.1(3)]{CT}: 
\begin{multline*}
E_1=A_{678}, E_2=A_{812}, E_3=A_{7812}, E_5=A_{5678}, L-E_{14}=A_{56781}, L-E_{24}=A_{78123}, \\
L-E_{34}=A_{8123}, L-E_{45}=A_{6781}.
\end{multline*}
It follows that no $A_i$ is a $(-1)$-curve, so $A$ is not an elementary augmentation.
Further, by Lemma~\ref{lemma_IF}(1) there are no irreducible $(-1)$-classes of the form $A_{k\ldots l}$ where $1\le k\le l\le 7$. This means (it would be clear in Section~\ref{section_firstkind}, see Lemma~\ref{lemma_IFdef}) that there exist no elementary augmentation $B$ that can be sent to $A$ by transpositions $\perm_1,\ldots,\perm_7$. On the other hand, let $B=\perm_7\perm_8(A)$, then $B_6=E_1$ is an irreducible curve and $B$ is an elementary augmentation.

One can check (using Theorem~\ref{theorem_checkonlyminustwo}) that the toric system $A$ is cyclic strong exceptional.
\end{example}

\section{Toric systems of the first kind are augmentations}
\label{section_firstkind}

Below we prove that any toric system $A=(A_1,\ldots,A_n)$ of the first kind (Definition~\ref{def_1kind}) on a rational surface $X$ is an augmentation in the weak sense (Definition~\ref{def_augm2}). Moreover, if $A$ is exceptional/strong  exceptional/cyclic strong exceptional then $A$ is an exceptional/strong  exceptional/cyclic strong exceptional augmentation in the sense of Definition~\ref{def_augm4}.

The proof is by induction in $n$. To do one step, it suffices to find a toric system $B$ obtained from $A$ by several transpositions, such that $B$ is an elementary augmentation. For any toric system $A$ on a surface $X$, denote by $I(X,A)\subset I(X)$ the subset of $(-1)$-classes which are elements of toric systems that can be obtained from $A$ by several transpositions:
$$I(X,A)=\{D\in I(X)\mid \exists k_1,\ldots,k_r,i\in [1,\ldots,n] \quad D=B_i,\,\,\text{where}\,\, B=\perm_{k_r}\ldots \perm_{k_1}(A)\}.$$
\begin{lemma}
\label{lemma_IFdef}
In the above notation one has
\begin{equation}
\label{eq_IF}
I(X,A)=\{A_{k\ldots l}\mid \exists m,\, k\le m\le l,\, a_k=\ldots=a_{m-1}=a_{m+1}=\ldots=a_l=-2,\,  a_m=-1\},
\end{equation}
where $a_i=A_i^2$. 
\end{lemma}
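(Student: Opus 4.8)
The plan is to first recognise the right-hand side of~\eqref{eq_IF}. Since $A$ is of the first kind (the standing assumption of this section), $a_i\ge-2$ for all $i$ and $\deg(X)=d\ge3$. By Proposition~\ref{prop_Akl2} the divisor $A_{k\ldots l}$ is always numerically left-orthogonal, so $A_{k\ldots l}\in I(X)$ if and only if $A_{k\ldots l}^2=-1$; and since $A_{k\ldots l}^2+2=\sum_{i\in[k,\ldots,l]}(a_i+2)$ with every summand $\ge0$, this happens exactly when one summand equals $1$ and the rest vanish, i.e.\ exactly when there is a (necessarily unique) $m\in[k,\ldots,l]$ with $a_m=-1$ and $a_i=-2$ for $i\in[k,\ldots,l]\setminus\{m\}$. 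Hence it remains to prove that $I(X,A)$ equals $\{A_{k\ldots l}\mid[k,\ldots,l]\subsetneq[1,\ldots,n]\text{ a cyclic segment and }A_{k\ldots l}^2=-1\}$.

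For the inclusion ``$\subseteq$'' I would use the description of transpositions at the level of the associated collection of line bundles: for $1\le k\le n-1$ the transposition $\perm_k$ merely interchanges two neighbouring terms of the collection, while $\perm_n$ sends $(\O_X(F_1),\ldots,\O_X(F_n))$ to $(\O_X(F_n+K_X),\O_X(F_2),\ldots,\O_X(F_{n-1}),\O_X(F_1-K_X))$; in either case, terms lying in the helix $(D_i)_{i\in\Z}$ of $A$ (given by $D_{i+n}=D_i-K_X$) are sent to terms lying in that helix. Hence every entry $B_i$ of a toric system $B=\perm_{k_r}\cdots\perm_{k_1}(A)$ has the form $D_b-D_a$ with $a\ne b$, which, by the $n$-periodicity of $A$ and the identity $-K_X-A_{k\ldots l}=A_{l+1\ldots k-1}$, can be written as $q(-K_X)+A_{k\ldots l}$ for some $q\in\Z$ and some cyclic segment $[k,\ldots,l]$ (the degenerate case $B_i=q(-K_X)$ being no $(-1)$-class). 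Every toric-system entry is numerically left-orthogonal (Propositions~\ref{prop_tsne} and~\ref{prop_D2}), so $B_i\in I(X)$ forces $B_i^2=-1$; writing $s=A_{k\ldots l}^2$ and using $A_{k\ldots l}\cdot(-K_X)=s+2$ (Propositions~\ref{prop_Akl2} and~\ref{prop_D2}), a short manipulation of the two resulting relations for $B_i^2$ (one from numerical left-orthogonality, one from direct expansion) together with $B_i^2=-1$ eliminates $s$ and yields $qd=2-d$; as $d\ge3$ this forces $q=0$, hence $B_i=A_{k\ldots l}$ with $A_{k\ldots l}^2=-1$. Thus every element of $I(X,A)$ is of the required form.

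For the inclusion ``$\supseteq$'' I would fix a cyclic segment $[k,\ldots,l]$ with a unique $m\in[k,\ldots,l]$ such that $a_m=-1$ and $a_i=-2$ for the remaining indices, and realise $A_{k\ldots l}$ as an entry of a toric system reachable from $A$ by transpositions. Both $I(X,A)$ and the set of the previous paragraph are invariant under cyclic shifts of $A$ — transpositions conjugate into transpositions under $\sh$, and $\sh$ only permutes the entries of a toric system — so, $[k,\ldots,l]$ being a proper cyclic segment, after replacing $A$ by a suitable cyclic shift I may assume $[k,\ldots,l]\subseteq[1,\ldots,n-1]$. Then $\{k,\ldots,m-1\}$ and $\{m+1,\ldots,l\}$ are two (possibly empty) blocks of $(-2)$-indices contained in $[1,\ldots,n-1]$, and the families $\perm_k,\ldots,\perm_{m-1}$ and $\perm_{m+1},\ldots,\perm_l$ generate symmetric-group actions (see the discussion following the definition of the transpositions in Section~\ref{section_prelim3}) that reorder the divisors $D_k,\ldots,D_m$, respectively $D_{m+1},\ldots,D_{l+1}$, among their positions in the collection while fixing everything else. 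Carrying out first the permutation that moves $D_k$ into position $m$ and then the one that moves $D_{l+1}$ into position $m+1$, I obtain a toric system $B$ whose entry at position $m$ is $D_{l+1}-D_k=A_{k\ldots l}$; by the first paragraph this is a $(-1)$-class, so $A_{k\ldots l}\in I(X,A)$.

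The step I expect to be the main obstacle is the bookkeeping in ``$\supseteq$'': one must verify that every transposition invoked is admissible at the moment it is applied (the transposed entry must square to $-2$, which is kept in check by $(\perm_k(C))^2=C^2$), that the families $\perm_k,\ldots,\perm_l$ genuinely realise arbitrary reorderings of the line bundles inside a $(-2)$-block, and — since the two blocks abut at position $m$ — that performing the two permutations in the stated order leaves precisely $A_{k\ldots l}$ (and not some other partial sum) in position $m$. The reduction to $[k,\ldots,l]\subseteq[1,\ldots,n-1]$, which removes any need for the exceptional transposition $\perm_n$, is what makes this bookkeeping tractable.
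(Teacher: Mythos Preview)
Your $\supset$ argument is essentially the paper's: reduce by cyclic shift to $[k,\ldots,l]\subset[1,\ldots,n-1]$ and then use the explicit word $\perm_{m+1}\cdots\perm_l\circ\perm_{m-1}\cdots\perm_k$ to place $D_k$ in position $m$ and $D_{l+1}$ in position $m+1$.

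Your $\subset$ argument is genuinely different. The paper does not track the helix or do any arithmetic with $-K_X$; instead it first cyclically shifts so that $a_n\ne-2$ (possible because the all-$(-2)$ sequence is not admissible), observes that then $\perm_n$ can \emph{never} be applied since $(\perm_k(B))^2=B^2$, and hence every permuted collection is literally a reordering of $\{D_1,\ldots,D_n\}$. Writing $D'_m=D_k$, $D'_{m+1}=D_{l+1}$ one reads off $A'_m=A_{k\ldots l}$ directly, and the block structure forces $a_k=\cdots=a_{m-1}=a_{m+1}=\cdots=a_l=-2$ and $a_m=(A'_m)^2=-1$. Your route---every entry of a permuted system is a difference of helix elements, hence $q(-K_X)+A_{k\ldots l}$, and then $qd=2-d$ forces $q=0$---is a nice alternative and the arithmetic is correct.

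However, there is a limitation you should be aware of. The lemma is stated for an arbitrary toric system, and it is invoked in Section~\ref{section_ce} for a system on a degree-$2$ surface with $A_{10}^2=-3$, which is \emph{not} of the first kind. Your proof uses the first-kind assumption twice: to get $d\ge3$ (in fact your arithmetic already works for $d\ge2$), and more seriously in your opening paragraph, where you replace the right-hand side of~\eqref{eq_IF} by $\{A_{k\ldots l}\mid A_{k\ldots l}^2=-1\}$. That replacement fails in general (e.g.\ $(a_k,a_{k+1},a_{k+2})=(-1,-3,-1)$ gives $A_{k,k+1,k+2}^2=-1$ without the required pattern), so your $\subset$ conclusion, as written, only shows $B_i=A_{k\ldots l}$ with $A_{k\ldots l}^2=-1$, not the finer structure on the $a_i$. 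The paper's block argument yields that structure for free, which is why it applies beyond the first-kind case.
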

\begin{proof}
First, we prove $\supset$ inclusion. 
Suppose $a_k=\ldots=a_{m-1}=a_{m+1}=\ldots=a_l=-2,\,  a_m=-1$. 
Note that both sides of \eqref{eq_IF} are invariant under cyclic shifts of $A$ (for $I(X,A)$ it holds because $\perm$ commutes with $\sh$: $\perm_i\circ \sh=\sh\circ \perm_{i+1}$). Hence, applying cyclic shifts to~$A$, we may and will assume that $1\le k\le l\le n-1$.
Let 
\begin{equation}
\label{eq_oxd}
(\O_X(D_1),\ldots,\O_X(D_n))
\end{equation}
be the corresponding numerically exceptional collection. Then the line bundles in any of the segments $\O_X(D_k),\ldots,\O_X(D_m)$ and 
$\O_X(D_{m+1}),\ldots,\O_X(D_{l+1})$ are numerically completely orthogonal. Permuting line bundles in \eqref{eq_oxd} one can obtain the numerically exceptional collection
\begin{multline*}
(\O_X(D_1),\ldots,\O_X(D_{k-1}),\quad \O_X(D_{k+1}),\O_X(D_{k+2}),\ldots,\O_X(D_m),\O_X(D_k),\\
\O_X(D_{l+1}),\O_X(D_{m+1}),\ldots,\O_X(D_{l}),\quad \O_X(D_{l+2}),\ldots,\O_X(D_n)).
\end{multline*}
Let $B$ be the corresponding toric system, explicitly one has
$$B=\perm_{m+1}\ldots \perm_{l-1}\perm_l (\perm_{m-1}\ldots \perm_{k+1}\perm_k(A))$$
(recall that $\perm_i(A)^2=A^2$ and thus  one has $(\perm_{k_r}\ldots \perm_{k_1}(A))_i^2=-2$ for any $i,k_1,\ldots,k_r\in [k,\ldots,l]\setminus\{m\}$). 
Then $B_m=D_{l+1}-D_k=A_{k\ldots l}$.
Since $A_{k\ldots l}^2$ is a $(-1)$-class by Proposition~\ref{prop_Akl2}, we conclude that $A_{k\ldots l}\in I(X,A)$.

Now we prove $\subset$ inclusion. As above, applying cyclic shifts to $A$ we may assume that $a_n\ne -2$ (otherwise $A^2=(-2,-2,\ldots,-2)$, this sequence is not admissible).
Suppose that $A'_m$ is a $(-1)$-class for some toric system 
\begin{equation}
\label{eq_permperm}
A'=\perm_{k_r}\ldots \perm_{k_1}(A).
\end{equation} 
Let $(\O_X(D_1),\ldots,\O_X(D_n))$ and $(\O_X(D'_1),\ldots,\O_X(D'_n))$ be the corresponding collections of line bundles.
The sets $\{D_1,\ldots,D_n\}$ and $\{D'_1,\ldots,D'_n\}$ are the same because $a_n\ne -2$ and $\perm_n$ is not used in \eqref{eq_permperm}. Assume 
$D'_m=D_k$ and $D'_{m+1}=D_{l+1}$. Hence $A'_m=D'_{m+1}-D'_m=D_{l+1}-D_k=A_{k\ldots l}$. Further, the line bundles $D_k,D_{k+1},\ldots,D_{m}$ are in one block, and 
$D_{m+1},D_{m+2},\ldots,D_{l+1}$ are in another block. It follows that $a_k=a_{k+1}=\ldots=a_{m-1}=a_{m+1}=\ldots=a_{l}=-2$. Also $A_m^2=(A'_m)^2=-1$.
\end{proof}

The following lemma is very encouraging.
\begin{lemma}
\label{lemma_IF}
Let $A$ be a toric system of the first kind on a rational surface $X$.
\begin{enumerate}
\item Suppose $C=A_{k\ldots l}$ and $C'=A_{k'\ldots l'}$ are $(-1)$-classes and $C=C'$. Then $k=k'$ and $l=l'$.
\item Assume $\deg(X)\le 7$, then $I(X,A)=I(X)$.
\end{enumerate}
\end{lemma}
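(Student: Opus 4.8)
The plan is to prove the two parts in turn, starting from a common observation. If $C=A_{k\ldots l}$ is a $(-1)$-class, then Proposition~\ref{prop_Akl2} gives $\sum_{i\in[k,\ldots,l]}(A_i^2+2)=A_{k\ldots l}^2+2=1$; since $A$ is of the first kind every summand is non-negative, so there is a unique index $m\in[k,\ldots,l]$ with $A_m^2=-1$, while $A_i^2=-2$ for all other $i\in[k,\ldots,l]$. Conversely, every cyclic segment with this profile produces a $(-1)$-class by Proposition~\ref{prop_Akl2}, so by Lemma~\ref{lemma_IFdef} the set $I(X,A)$ consists precisely of the classes $A_{k\ldots l}$ attached to such ``profiled'' segments.

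For part (1), I would recover the segment $[k,\ldots,l]$ from the class $C=A_{k\ldots l}$ by inspecting the intersection numbers $f(j):=C\cdot A_j$, $j\in\Z/n$. Using only the toric-system relations and the self-intersection profile above, one checks that $f$ takes values in $\{-1,0,1\}$: it equals $1$ at the two indices $k-1$ and $l+1$ flanking the segment (which lie outside $[k,\ldots,l]$ since $[k,\ldots,l]\ne[1,\ldots,n]$), it equals $1$ at $m$ when $m$ is an interior index, it equals $-1$ at each endpoint of the segment other than $m$, and it vanishes elsewhere. One should first rule out the degenerate possibility that $[k,\ldots,l]$ has $n-1$ indices with the required profile: then $A_{k\ldots l}=-K_X-A_{j_0}$ for the single complementary index $j_0$, which forces $A_{j_0}^2=d-3$, and this does not occur in any first-kind system by the classification in Table~\ref{table_csadm}. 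Hence $k-1\ne l+1$, and no coincidences among $k-1,k,m,l,l+1$ spoil the description of $f$. A short case distinction — according to whether $m$ is an endpoint, equivalently whether $f^{-1}(-1)$ has one or two elements — then shows that $[k,\ldots,l]$ is the unique arc of $\Z/n$ cut out by the level sets $f^{-1}(1)$ and $f^{-1}(-1)$ in the prescribed way. Since $C=C'$ implies $f=f'$, we conclude $k=k'$ and $l=l'$. (Equivalently, one may write $C-C'=\sum_i c_iA_i=0$ with $c_i\in\{0,\pm1\}$ supported near two cyclic segments; pairing with each $A_j$ gives $c_{j-1}+c_{j+1}+A_j^2c_j=0$, and the only solution of this kind is $c\equiv0$.)

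For part (2), one has $I(X,A)\subseteq I(X)$ always, and by part (1) the assignment from a profiled segment to its class $A_{k\ldots l}$ is injective, so $|I(X,A)|$ equals the number $N$ of cyclic segments $[k,\ldots,l]\subsetneq[1,\ldots,n]$ carrying exactly one index of self-intersection $-1$ and all others $-2$. This number depends only on the sequence $A^2$ and is invariant under the cyclic shifts and the symmetry of Section~\ref{section_adm}. Since $A$ is of the first kind with $3\le\deg X\le 7$, i.e. $5\le n\le 9$, Proposition~\ref{prop_csadm} reduces us to the finitely many sequences $5a,\dots,9$ in Table~\ref{table_csadm}, and it remains to verify in each case that $N$ equals $|I(X)|$ as tabulated in Table~\ref{table_root} (for instance $N=10=|I(X)|$ for type $7a$). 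Combined with $I(X,A)\subseteq I(X)$ this gives $I(X,A)=I(X)$.

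The main obstacle is entirely in part (1): one must check carefully that the level sets of $f$ pin down the segment in every configuration, including short segments and those whose special index $m$ is an endpoint, and that no unexpected coincidence among the indices $k-1,k,m,l,l+1$ can occur for a first-kind toric system. Once part (1) and the classification of Table~\ref{table_csadm} are in hand, part (2) is a routine finite count.
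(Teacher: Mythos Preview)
Your proof of part (2) is the same as the paper's: count profiled segments for each sequence in Table~\ref{table_csadm}, use part (1) for injectivity, and compare with $|I(X)|$ from Table~\ref{table_root}.

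For part (1) you take a genuinely different route. The paper never introduces the profile function $f(j)=C\cdot A_j$; instead it observes that $C=C'$ forces $C\cdot C'=C^2=-1$ and then does a short case split on the mutual position of the two cyclic segments $[k,\ldots,l]$ and $[k',\ldots,l']$ (one contained in the other, properly overlapping, or disjoint), getting a contradiction in all but the equal case from the intersection computation. That argument is a few lines and uses only Proposition~\ref{prop_Akl2} and one appeal to Table~\ref{table_csadm} to exclude $[k,\ldots,l]\cup[k',\ldots,l']=[1,\ldots,n]$. Your approach via $f$ works and is conceptually nice (it shows the segment is \emph{determined} by the class, not merely that two given segments coincide), but it buys this at the cost of a finer case analysis. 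Two small points to tighten: for the singleton segment $\{m\}$ one has $f(m)=A_m^2=-1$, not $0$, so your description of $f$ needs a separate clause there (the recovery argument still goes through once this is corrected); and in your parenthetical alternative, the recurrence $c_{j-1}+c_{j+1}+A_j^2c_j=0$ has a two-dimensional solution space (the kernel of $\Z^n\to\Pic(X)$), so ``the only solution of this kind is $c\equiv 0$'' is not automatic---it again needs the specific shape of $c$ and essentially the same case analysis, so it is a reformulation rather than an independent proof.
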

\begin{proof}
(1) 
Denote $a_i:=A_i^2$.
By Lemma~\ref{lemma_IFdef}, there exist numbers 
$p\in [k,\ldots, l]$, $p'\in [k',\ldots, l']$ such that 
\begin{equation}
\label{eq_apap}
a_p=a_{p'}=-1, \quad a_i=-2\quad\text{for}\quad i=k,\ldots, p-1,p+1,\ldots,l \quad\text{and}\quad i=k',\ldots, p'-1,p'+1,\ldots,l'.
\end{equation}
We consider several cases distinguished by the relation between $[k,\ldots,l]$ and $[k',\ldots,l']$. Essentially, there are three cases (note that $[k,\ldots,l]\cup [k',\ldots,l']\ne [1,\ldots,n]$ by \eqref{eq_apap} and explicit description of admissible sequences of the first kind in Proposition~\ref{prop_csadm}).

First, suppose that $[k,\ldots,l]\subset [k',\ldots,l']$. Then
$$C'=A_{k'\ldots k-1}+A_{k\ldots l}+A_{l+1\ldots l'}=:D_-+C+D_+$$
and 
$$C\cdot C'=C^2+D_-\cdot C+D_+\cdot C.$$ Note that 
$D_-\cdot C=1$ unless $k=k'$ and $D_-=0$, similarly for $D_+\cdot C$.
Since $C^2=C\cdot C'=-1$, it follows that $k=k'$ and $l=l'$, we are done.

Now assume that $k<k'\le l<l'$. Then 
$$C=A_{k\ldots k'-1}+A_{k'\ldots l}=:D_-+D\quad\text{and}\quad
C'=A_{k'\ldots l}+A_{l+1\ldots l'}=:D+D_+,$$
where $D_-,D_+$ are nonzero. We have 
$$-1=C\cdot C'=D^2+D\cdot (D_-+D_+)+D_-\cdot D_+=D^2+2+0=D^2+2.$$ 
Since $D^2=-1$ or $-2$ by Proposition~\ref{prop_Akl2} and \eqref{eq_apap}, we get a contradiction.

Finally, assume that $[k,\ldots,l]\cap [k',\ldots,l']=\emptyset$. Then $-1=C\cdot C'=A_{k\ldots l}\cdot A_{k'\ldots l'}=0$ or $1$, a contradiction.

(2) Note that the sequence $A^2$ is admissible and of the first kind, see Definition~\ref{def_1kind}. For any admissible sequence $A^2$ of the first kind (up to a symmetry and a shift, see Table~\ref{table_csadm}) we find the cardinality of $I(X,A)$ using Lemma~\ref{lemma_IFdef}.  
In Table~\ref{table_ruby} we present all divisors $A_{k\ldots l}$ satisfying numerical conditions in the r.h.s. of~\eqref{eq_IF}. By part (1), they are pairwise different. Also, their number equals to the  cardinality of $I(X)$, see Table~\ref{table_root}. It follows that $I(X)=I(X,A)$.

\begin{table}[h]
\caption{$I(X,A)$ for toric systems of the first kind}
\label{table_ruby}
\begin{center}
\begin{tabular}{|c|c|c|c|}
		 \hline
			type & $A^2$ & $I(X,A)$ & $|I(X)|$\\
		 \hline	
			5a & $(0,0,-1,-1,-1)$ & $A_3,A_4,A_5$ & $3$\\
		 \hline	
			5b & $(0,-2,-1,-1,1)$ & $A_{23},A_3,A_4$ & $3$\\
		 \hline	
			6a & $(-1,-1,-1,-1,-1,-1)$ & $A_1,A_2,A_3,A_4,A_5,A_6$ & $6$\\
		 \hline	
			6b & $(-1,-1,-2,-1,-1,0)$ & $A_1,A_2,A_{23},A_{34},A_4,A_5$ & $6$\\
		 \hline	
			6c & $(0,0,-2,-1,-2,-1)$ & $A_{34},A_{345},A_4,A_{45},A_{56},A_6$ & $6$\\
		 \hline	
			6d & $(0,-2,-2,-1,-2,1)$ & $A_{2345},A_{234}, A_{345},A_{34},A_{45},A_{4}$ & $6$\\
		 \hline	
			7a & $(-1,-1,-1,-2,-1,-2,-1)$ & $A_1,A_2,A_3,A_{34},A_{45},A_{456},A_5,A_{56},A_{67},A_7$ & $10$ \\
		 \hline	
			7b & $(-2,-1,-2,-2,-1,-1,0)$ & $A_{1234},A_{234},A_{123},A_{23},A_{12},A_{2}, A_{345},A_{45},A_5,A_6$ & $10$\\
		 \hline	
			8a & $(-2,-1,-2,-1,-2,-1,-2,-1)$ & $A_{2k},A_{2k-1,2k},A_{2k,2k+1}, A_{2k-1,2k,2k+1}$ & $16$\\
			&&  for $k\in \{1,2,3, 4\}$ & \\
		 \hline	
			8b & $(-2,-1,-1,-2,-1,-2,-2,-1)$ & $A_{12},A_2,A_3,A_{34}, A_{k\ldots l}$ for 
			$k\in\{4,5\}, l\in\{5,6,7\}$, & $16$\\
			& & $A_{k\ldots l}$ for $k\in \{6,7,8\}, l\in\{8,1\}$ & \\
		 \hline	
			8c & $(-2,-1,-2,-2,-2,-1,-2,0)$  & $A_{k\ldots l}$ for $k\in \{1,2\}, l\in\{2,3,4,5\},$ & $16$\\
			& & $A_{k\ldots l}$ for $k\in \{3,4,5,6\}, l\in\{6,7\}$ & \\
		 \hline	
			9 & $(-2,-2,-1,-2,-2,-1,-2,-2,-1)$ & $A_{k\ldots l}$ for $k\in\{1,2,3\},j\in\{3,4,5\}$,& $27$ \\
			&& $A_{k\ldots l}$ for $k\in\{4,5,6\},l\in\{6,7,8\}$, & \\
			&& $A_{k\ldots l}$ for $k\in\{7,8,9\},l\in\{9,1,2\}$ & \\
		 \hline	
\end{tabular}
\end{center}
\end{table}

\end{proof}

\begin{theorem}
\label{theorem_1kindweak}
Any toric system $A$ of the first kind on a smooth rational projective surface~$X$ is an augmentation in the weak sense. Moreover, if $X\ne \P^2$ then $A$ is an augmentation in the weak sense along any given chain of blow-downs to a Hirzebruch surface.
\end{theorem}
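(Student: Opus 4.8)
The plan is to induct on $\rank\Pic(X)$. For the base case: if $X=\P^2$ then the unique toric system $(L,L,L)$ is a standard augmentation by the convention made after Definition~\ref{def_augm1} and the ``moreover'' clause is vacuous; if $\rank\Pic(X)=2$ then $X$ is a Hirzebruch surface (the only non-minimal possibility, a one-point blow-up of $\P^2$, is $\mathbb F_1$), so $A$ is a toric system on a Hirzebruch surface and is an augmentation in the weak sense along the trivial chain, with no operations applied. I record at the outset that, by Proposition~\ref{prop_csadm}, a toric system of the first kind can only live on a surface with $\deg X\ge 3$; hence in the inductive step I may assume $3\le\deg X\le 7$, so that Lemma~\ref{lemma_IF}(2) is available.

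For the inductive step I take a given chain of blow-ups $X=Y_r\xra{p_r}Y_{r-1}\xra{p_{r-1}}\cdots\xra{p_1}Y_0=\mathbb F_d$ and look at the last one, $p_r$, the contraction of a $(-1)$-curve $C\subset X$. Its class $E:=[C]$ is a $(-1)$-class, hence $E\in I(X)$, and since $\deg X\le 7$ Lemma~\ref{lemma_IF}(2) gives $I(X,A)=I(X)$, so $E\in I(X,A)$. By the definition of $I(X,A)$ there is a toric system $B=\perm_{k_s}\circ\cdots\circ\perm_{k_1}(A)$, obtained from $A$ by transpositions, with $B_m=E$ for some $m$; and by Proposition~\ref{prop_elemaugm} I may then write $B=\augm_{p_r,m}(B')$ for a toric system $B'$ on $Y_{r-1}$.

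Next I check that $B'$ is again of the first kind and apply induction. By \eqref{eq_perm2} we have $B^2=A^2$, which is admissible of the first kind, and $B^2=\augm_m((B')^2)$ by Lemma~\ref{lemma_bld1kind}, so that lemma forces $(B')^2$ to be of the first kind as well. Since $\rank\Pic(Y_{r-1})=\rank\Pic(X)-1$ and $Y_{r-1}\ne\P^2$ (every term of a chain ending at a Hirzebruch surface has Picard rank at least $2$), the inductive hypothesis applies to $B'$ and the chain $Y_{r-1}\xra{p_{r-1}}\cdots\xra{p_1}Y_0$. Hence $B=\augm_{p_r,m}(B')$ is an augmentation in the weak sense along the full chain $p_1,\ldots,p_r$, and since each $\perm_{k_i}$ is an involution we have $A=\perm_{k_1}\circ\cdots\circ\perm_{k_s}(B)$, so $A$ is obtained from the augmentation-in-the-weak-sense $B$ by transpositions alone and is therefore itself an augmentation in the weak sense along the same chain. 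The statement without ``moreover'' follows by choosing any such chain when $X\ne\P^2$.

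The only substantial ingredient is Lemma~\ref{lemma_IF}(2), the equality $I(X,A)=I(X)$, which is exactly what guarantees that whichever $(-1)$-curve we have decided to contract first carries a class that can be moved, by transpositions, into a toric system equivalent to $A$; everything else is routine bookkeeping. The one point that wants a moment's attention is that the induction does not stall: in the inductive step $\deg Y_{r-1}=\deg X+1\le 8$, so $Y_{r-1}$ is either already Hirzebruch (the base case) or has $\deg Y_{r-1}\le 7$, and then Lemma~\ref{lemma_IF}(2) is available again at the next stage.
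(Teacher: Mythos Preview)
Your proof is correct and follows essentially the same approach as the paper's: induct on $\rank\Pic(X)$, use Lemma~\ref{lemma_IF}(2) to locate the class of the chosen $(-1)$-curve in $I(X,A)$, transpose so that it becomes an entry of the toric system, apply Proposition~\ref{prop_elemaugm} to peel off an elementary augmentation, and verify via \eqref{eq_perm2} and Lemma~\ref{lemma_bld1kind} that the resulting system on the blow-down is again of the first kind. Your version is in fact slightly more careful than the paper's in that you explicitly verify the hypothesis $\deg X\le 7$ of Lemma~\ref{lemma_IF}(2) and spell out the ``along any given chain'' clause.
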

\begin{proof}
If $X$ is a Hirzebruch surface or $\P^2$ there is nothing to prove. Assume now there is a blow-up $X\to X'$ with the exceptional curve $E$. By Lemma~\ref{lemma_IF}, $I(X,A)=I(X)$. The class of $E$ belongs to $I(X,A)$, hence by Lemma~\ref{lemma_IFdef} $E=A_{k\ldots l}$ for some $k,l$. 
By definition of $I(X,A)$, there exists a toric system of the form $$B=\perm_{k_r}\ldots \perm_{k_1}(A),$$ 
such that  $B_m=E$ for some $m$. By Proposition~\ref{prop_elemaugm}, $B=\augm_m(C)$ for  some toric system $C$ on $X'$. We have $B^2=A^2$ by \eqref{eq_perm2}, hence $B$ is of the first kind. Now $C$ is also of the first kind by Lemma~\ref{lemma_bld1kind}. By induction in $\rank \Pic(X)$ we may assume that $C$ is an augmentation in the weak sense, hence $B$ and $A$ are also such.
\end{proof}


\begin{corollary}
\label{cor_cyclicaugm}
Any cyclic strong exceptional toric system $A$ on a rational surface~$X$ is a cyclic strong exceptional augmentation along any given chain of blow-downs of $X$ to a Hirzebruch surface. 
\end{corollary}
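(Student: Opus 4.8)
The plan is to deduce the corollary from Theorem~\ref{theorem_1kindweak} together with Lemma~\ref{lemma_cseweak=mild} and Proposition~\ref{prop_cyclic1type}. First, I would observe that by Proposition~\ref{prop_cyclic1type}, a cyclic strong exceptional toric system $A$ satisfies $A_i^2\ge -2$ for all $i$; that is, $A$ is of the first kind in the sense of Definition~\ref{def_1kind}. Hence Theorem~\ref{theorem_1kindweak} applies and tells us that $A$ is an augmentation in the weak sense, and moreover that the chain of transpositions, cyclic shifts and elementary augmentations realizing $A$ can be taken along \emph{any} prescribed chain of blow-downs of $X$ to a Hirzebruch surface.

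The second step is to upgrade ``augmentation in the weak sense'' to ``cyclic strong exceptional augmentation.'' This is exactly the content of Lemma~\ref{lemma_cseweak=mild}: for a rational surface $X$ and a cyclic strong exceptional toric system $A$ on $X$, being an augmentation in the weak sense is equivalent to being a cyclic strong exceptional augmentation. So invoking that lemma finishes the argument. I should double-check that the proof of Lemma~\ref{lemma_cseweak=mild} — which runs backward along the sequence $A^{(0)},\dots,A^{(m)}=A$, using that $\perm$ and $\sh$ preserve cyclic strong exceptionality (Lemma~\ref{lemma_perm}(3) and the remark after it) and that $\augm_k(B)$ cyclic strong exceptional implies $B$ cyclic strong exceptional (Proposition~\ref{prop_augmexc}(3)) — is compatible with fixing the chain of blow-downs in advance; it is, because that lemma does not alter the chain, it only re-certifies that each intermediate toric system in the given sequence is cyclic strong exceptional.

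There is essentially no obstacle here: the corollary is a formal consequence of results already established in Section~\ref{section_firstkind} and Section~\ref{section_prelim3}. The only point requiring a little care is the phrase ``along any given chain of blow-downs,'' which must be tracked through both Theorem~\ref{theorem_1kindweak} (where it is explicitly part of the statement) and Lemma~\ref{lemma_cseweak=mild} (where it is preserved automatically). Thus the write-up is short: reduce to the first-kind case via Proposition~\ref{prop_cyclic1type}, apply Theorem~\ref{theorem_1kindweak} to get a weak augmentation along the chosen chain, then apply Lemma~\ref{lemma_cseweak=mild} to conclude it is a cyclic strong exceptional augmentation.
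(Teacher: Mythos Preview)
Your proposal is correct and follows essentially the same approach as the paper's own proof: reduce to the first kind via Proposition~\ref{prop_cyclic1type}, apply Theorem~\ref{theorem_1kindweak} to get a weak augmentation along the given chain, then invoke Lemma~\ref{lemma_cseweak=mild} to upgrade to a cyclic strong exceptional augmentation. Your additional remark that Lemma~\ref{lemma_cseweak=mild} preserves the chosen chain is a useful clarification that the paper leaves implicit.
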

\begin{proof}
Any cyclic strong exceptional toric system is of the first kind by Proposition~\ref{prop_cyclic1type}. 
By Theorem~\ref{theorem_1kindweak}, $A$ is an augmentation in the weak sense along any chain of blow-downs to a Hirzebruch surface. From Lemma~\ref{lemma_cseweak=mild} it follows that $A$ is a cyclic strong exceptional augmentation along that chain. 
\end{proof}

\begin{corollary}
Any cyclic strong exceptional collection of line bundles of maximal length on a rational surface is full.
\end{corollary}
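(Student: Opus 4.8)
The plan is to combine Corollary~\ref{cor_cyclicaugm} with Proposition~\ref{prop_full}. Let $(\O_X(D_1),\ldots,\O_X(D_n))$ be a cyclic strong exceptional collection of line bundles of maximal length on a rational surface $X$, and let $A=(A_1,\ldots,A_n)$ be the associated toric system. By definition of maximal length we have $n=\rank K_0(X)_{num}$, which matches the length required of a toric system, so $A$ is indeed a toric system on $X$. Since the collection is in particular strong exceptional (and exceptional), $A$ is a cyclic strong exceptional toric system in the sense of Definition~\ref{def_exts}.

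First I would apply Corollary~\ref{cor_cyclicaugm}: any cyclic strong exceptional toric system on a rational surface is a cyclic strong exceptional augmentation (along some chain of blow-downs to a Hirzebruch surface). In particular, unwinding Definition~\ref{def_augm4}, $A$ is obtained from a toric system on a Hirzebruch surface by a sequence of transpositions, cyclic shifts, and elementary augmentations, \emph{all of whose intermediate toric systems are (cyclic strong) exceptional}. A fortiori, $A$ is an exceptional augmentation in the sense of Definition~\ref{def_augm4}, since every cyclic strong exceptional toric system is exceptional and each intermediate step in the chain witnessing the cyclic strong exceptional augmentation is itself exceptional.

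Then I would invoke Proposition~\ref{prop_full}: an exceptional collection of line bundles of maximal length whose associated toric system is an exceptional augmentation is full. Applying this to our collection $(\O_X(D_1),\ldots,\O_X(D_n))$ — which has maximal length by hypothesis and whose toric system $A$ is an exceptional augmentation by the previous paragraph — we conclude that the collection is full. This completes the argument.

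There is essentially no obstacle here: the corollary is a direct two-line consequence of the two cited results, and the only point requiring a moment's care is the bookkeeping observation that a cyclic strong exceptional augmentation is in particular an exceptional augmentation (so that Proposition~\ref{prop_full}, stated for exceptional augmentations, applies). This is immediate from comparing the two clauses of Definition~\ref{def_augm4}, together with the fact that transpositions, shifts, and elementary augmentations that preserve cyclic strong exceptionality also preserve exceptionality.
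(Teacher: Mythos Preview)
Your proof is correct and follows exactly the same approach as the paper: the paper's proof is the single sentence ``It follows from Corollary~\ref{cor_cyclicaugm} and Proposition~\ref{prop_full}.'' Your additional remark that a cyclic strong exceptional augmentation is a fortiori an exceptional augmentation is correct and makes the passage between the two cited results explicit.
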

\begin{proof}
It follows from Corollary~\ref{cor_cyclicaugm} and Proposition~\ref{prop_full}.
\end{proof}

\begin{corollary}
\label{cor_cyclicblowup}
Let $X'$ be a rational surface and $X$ be a blow-up of a point on $X'$. Suppose that $X$ possesses  a cyclic strong exceptional toric system. Then $X'$ also possesses such system.
\end{corollary}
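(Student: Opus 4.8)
The plan is to carry out a single blow-down step of the inductive argument in the proof of Theorem~\ref{theorem_1kindweak}, keeping track of cyclic strong exceptionality. Let $A=(A_1,\ldots,A_n)$ be a cyclic strong exceptional toric system on $X$. By Proposition~\ref{prop_cyclic1type} it is of the first kind, so by Proposition~\ref{prop_csadm} one has $\deg X\ge 3$; since $X$ is the blow-up of $X'$ at one point, $\deg X=\deg X'-1\le 8$.

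First I would dispose of the boundary case $\deg X=8$, which forces $\deg X'=9$, i.e. $X'\cong\P^2$. Here it suffices to exhibit a cyclic strong exceptional toric system on $\P^2$, and $(L,L,L)$ works: its helix produces the collections $(\O_{\P^2}(kL),\O_{\P^2}((k+1)L),\O_{\P^2}((k+2)L))$ for $k\in\Z$, each of which is strong exceptional because $\Hom^i(\O_{\P^2}(aL),\O_{\P^2}(bL))=H^i(\P^2,\O_{\P^2}((b-a)L))$ vanishes for $i\ne 0$ whenever $b-a\ge -2$, which covers all differences occurring here.

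Now assume $3\le\deg X\le 7$, and let $p\colon X\to X'$ be the given blow-up with exceptional curve $E\subset X$, so $E\in I(X)$. Since $A$ is a toric system of the first kind on a rational surface of degree $\le 7$, Lemma~\ref{lemma_IF}(2) gives $I(X,A)=I(X)$, hence $E\in I(X,A)$. By the definition of $I(X,A)$ (cf. Lemma~\ref{lemma_IFdef}) there is a toric system $B=\perm_{k_r}\cdots\perm_{k_1}(A)$ with $B_m=E$ for some index $m$. Each transposition preserves cyclic strong exceptionality by Lemma~\ref{lemma_perm}(3), so $B$ is a cyclic strong exceptional toric system on $X$ having $E$ as a member. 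By Proposition~\ref{prop_elemaugm}, $B=\augm_{p,m}(C)$ for some toric system $C$ on $X'$, and by Proposition~\ref{prop_augmexc}(3) the system $C$ is again cyclic strong exceptional. Thus $C$ is the required toric system on $X'$.

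I do not expect a genuine obstacle: the argument is a bookkeeping combination of results already established (transpositions preserve cyclic strong exceptionality, and a toric system that augments to a cyclic strong exceptional one is itself cyclic strong exceptional). The only point needing separate attention is the case $X'\cong\P^2$, where the general tool Lemma~\ref{lemma_IF}(2) is unavailable and one argues directly as above. Alternatively, when $X'\ne\P^2$ one could deduce the statement from Corollary~\ref{cor_cyclicaugm} applied to a chain of blow-downs $X\to X'\to\cdots\to\mathbb F_d$, reading off the cyclic strong exceptional toric system on $X'$ from the intermediate stage of the augmentation, and still treat $\P^2$ by hand.
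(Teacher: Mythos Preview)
Your proof is correct and follows essentially the same path as the paper's: the paper invokes Corollary~\ref{cor_cyclicaugm} (which packages the whole descent to a Hirzebruch surface) and then reads off the toric system at the intermediate stage $X'$, whereas you unroll a single step of that argument directly using Lemma~\ref{lemma_IF}(2), Lemma~\ref{lemma_perm}(3), Proposition~\ref{prop_elemaugm} and Proposition~\ref{prop_augmexc}(3). Your version has the minor advantage of treating the boundary case $X'\cong\P^2$ explicitly, which the paper's appeal to Corollary~\ref{cor_cyclicaugm} (phrased only for chains ending at a Hirzebruch surface) leaves implicit.
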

\begin{proof}
Let $A$ be a cyclic strong exceptional toric system on $X$. By Corollary~\ref{cor_cyclicaugm}, $A$ is a cyclic strong exceptional augmentation along the blow-down $X\to X'$. By Definition~\ref{def_augm4}, it means that there exists a cyclic strong exceptional toric system $B$ on $X$ (obtained from $A$ by transpositions and cyclic shifts) such that  $B=\augm_i(B')$ for some cyclic strong exceptional toric system $B'$ on $X'$. 
\end{proof}

\begin{corollary}
\label{corollary_f012}
A rational surface $X$ possessing a cyclic strong exceptional toric system cannot be blown down to $\mathbb F_d$ for $d>2$. Moreover,  $X\cong \mathbb F_0,\mathbb F_2$ or $X$ has $\P^2$ as a minimal model.
\end{corollary}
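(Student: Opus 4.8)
The plan is to reduce the first assertion to Hirzebruch surfaces and then run a short computation there. Assume $X$ carries a cyclic strong exceptional toric system, and suppose for contradiction that there is a birational morphism $X\to\mathbb F_d$ with $d>2$; factor it as a chain of point blow-ups $X=X_N\to X_{N-1}\to\cdots\to X_0=\mathbb F_d$. Applying Corollary~\ref{cor_cyclicblowup} once at each step, every $X_j$, and in particular $\mathbb F_d$ itself, carries a cyclic strong exceptional toric system $A=(A_1,\ldots,A_4)$; here the length is $\rank K_0(\mathbb F_d)_{num}=4$. By Proposition~\ref{prop_cyclic1type} the system $A$ is of the first kind, so by Proposition~\ref{prop_csadm} and Table~\ref{table_csadm} the sequence $A^2$ is, up to a cyclic shift and a symmetry, one of $(0,0,0,0)$, $(0,-1,0,1)$, $(0,-2,0,2)$. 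Thus everything reduces to the claim that $\mathbb F_d$ carries no cyclic strong exceptional toric system when $d\ge 3$.

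The quickest way to finish is to invoke the criterion of Hille and Perling \cite{HP} recalled in the introduction: since any cyclic strong exceptional collection of line bundles of maximal length on a rational surface is full (proved above), a cyclic strong exceptional toric system on the toric surface $\mathbb F_d$ would produce a full cyclic strong exceptional collection of line bundles, forcing $-K_{\mathbb F_d}$ to be nef, i.e.\ $d\le 2$. I would also supply the self-contained argument, which carries the real content here --- note that the numerical invariant of a toric system recorded after Theorem~\ref{theorem_HP} does \emph{not} distinguish $\mathbb F_d$ from $\mathbb F_{d-2}$, so one genuinely needs the geometry of $\mathbb F_d$. Let $B$ be the irreducible $(-d)$-curve of $\mathbb F_d$ and $F$ a fibre. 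Inspecting the $(-1)$-, $(-2)$-, and $0$-classes of $\mathbb F_d$ (using, for example, that $\Pic\mathbb F_d$ has no $(-2)$-class when $d$ is odd) one checks that in each admissible case the toric system $A$ contains an entry $A_{i_0}$ with $A_{i_0}=\pm(B+cF)$ for some integer $0\le c\le d/2$, or else a contradiction is immediate (for instance $(0,0,0,0)$ with $d$ odd would force every $A_i$ to be a multiple of $F$, violating $A_iA_{i+1}=1$). Writing $D=B+cF$, one has $D\cdot B=c-d<0$, so $B$ is a fixed component of $|D|$ and $h^0(D)=h^0(D-B)=h^0(cF)=c+1$; since moreover $D$ is strong left-orthogonal and $h^2(D)=0$ (as $K_X-D$ is not effective), we get $h^0(D)=\chi(D)=D^2+2=2c-d+2$. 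Comparing the two expressions gives $c=d-1$, which contradicts $c\le d/2<d-1$; and if instead $-A_{i_0}=D$ then $h^0(-A_{i_0})=c+1>0$, again contradicting strong left-orthogonality of $A_{i_0}$. Hence $d\le 2$, which is the first assertion.

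For the final clause: if $X\cong\mathbb P^2$ there is nothing to prove. Otherwise $X$ admits a birational morphism onto some $\mathbb F_d$, and $d\le 2$ by the first assertion. If $d=1$ then $X\to\mathbb F_1\to\mathbb P^2$ shows that $\mathbb P^2$ is a minimal model of $X$. If $d\in\{0,2\}$ and $X\not\cong\mathbb F_d$, the morphism $X\to\mathbb F_d$ is not an isomorphism, hence factors through the blow-up $\mathrm{Bl}_p\mathbb F_d$ at a point $p$. For $d=0$, the surface $\mathrm{Bl}_p\mathbb F_0$ admits a birational morphism onto $\mathbb F_1$, so $X$ blows down to $\mathbb P^2$. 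For $d=2$: if $p$ lies on the $(-2)$-curve of $\mathbb F_2$ then the elementary transformation centred at $p$ exhibits a birational morphism of $X$ onto $\mathbb F_3$, contradicting the first assertion; hence $p$ lies off the $(-2)$-curve, and then the elementary transformation centred at $p$ exhibits a birational morphism of $X$ onto $\mathbb F_1$, so again $\mathbb P^2$ is a minimal model of $X$. The main obstacle in this plan is the middle step: absent the Hille--Perling criterion, one really has to run the explicit base-locus/cohomology computation on $\mathbb F_d$, since no purely numerical obstruction is available.
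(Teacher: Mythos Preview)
Your proof is correct. The reduction via Corollary~\ref{cor_cyclicblowup} to Hirzebruch surfaces and the elementary-transformation analysis for the ``moreover'' clause match the paper's argument essentially verbatim.

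The one genuine difference is in how you rule out $\mathbb F_d$ for $d>2$. The paper quotes \cite[Proposition~5.2]{HP}, which classifies all exceptional toric systems on $\mathbb F_d$ explicitly as $(F,\,S+sF,\,F,\,S-(d+s)F)$ and reads off that cyclic strong exceptionality forces $s\ge -1$ and $-(d+s)\ge -1$, hence $d\le 2$. You instead offer two routes: the Hille--Perling nef criterion for toric surfaces (which is legitimate here since fullness has already been established), and a self-contained argument that enumerates the $0$-, $(-1)$-, and $(-2)$-classes on $\mathbb F_d$, locates an entry $A_{i_0}=\pm(B+cF)$ with $0\le c\le d/2$, and then uses the fixed-component computation $h^0(B+cF)=c+1$ versus $\chi(B+cF)=2c-d+2$ to force $c=d-1$, a contradiction. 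Your self-contained route is longer but has the virtue of not relying on the external classification result; the paper's citation is quicker but opaque unless one has \cite{HP} at hand. Your remark that the sequence $A^2$ alone cannot distinguish $\mathbb F_d$ from $\mathbb F_{d-2}$ is a nice observation explaining why some actual geometry is unavoidable at this step.
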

\begin{proof}
First, we  note that Hirzebruch surfaces $\mathbb F_d$ do not have cyclic strong exceptional toric systems for $d>2$. It follows from \cite[Proposition 5.2]{HP}. Indeed, let $F,S\in\Pic(\mathbb F_d)$ denote the standard basis such that $F$ is a fiber and $S$ is a $d$-curve, $d\ge 0$. Then by \cite[Proposition 5.2]{HP} all exceptional toric systems on $\mathbb F_d$
have the form (up to cyclic shifts) 
$$(F,S+sF,F,S-(d+s)F),$$
moreover, such system is cyclic strong exceptional if and only if $s,-(d+s)\ge -1$. 
The existence of a cyclic strong exceptional toric system implies that $-d=s-(d+s)\ge -2$, that is, $d\le 2$.

By the classification of minimal rational surfaces, $X$ can be blown down to $\P^2$ or to some $\mathbb F_d$. Assume the latter case. It follows from the above remark and 
 Corollary~\ref{cor_cyclicblowup} that $0\le d\le 2$. If $X$ is not $\mathbb F_d$ then $X$ can be blown down to $X'$ which is a blow up of $\mathbb F_0,\mathbb F_1$ or $\mathbb F_2$ at one point.
Note that such $X'$ can be always blown down to $\mathbb F_1$ or $\mathbb F_3$. The latter case is impossible by Corollary~\ref{cor_cyclicblowup} so $X'$ can be blown down to $\mathbb F_1$ and further to $\P^2$.
\end{proof}

By Theorem~\ref{theorem_1kindweak}, any exceptional toric system of the first kind can be obtained by elementary augmentations and transpositions from some toric system on a Hirzebruch surface. But Theorem~\ref{theorem_1kindweak} does not track exceptionality of toric systems. The following result guarantees that the intermediate toric systems can be chosen to be also exceptional (and thus we do not leave the world of exceptional collections of line bundles).

\begin{theorem}
\label{theorem_1kind}
\begin{enumerate}
\item 
Any exceptional toric system $A$ of the first kind on a rational surface $X$ is an exceptional augmentation. 
\item
Any strong exceptional toric system $A$ of the first kind on a weak del Pezzo surface $X$ is a strong exceptional augmentation. 
\end{enumerate}
\end{theorem}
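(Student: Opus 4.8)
The plan is to argue by induction on the number of blow-ups needed to reach a minimal model of $X$ (equivalently, on $\rank\Pic(X)$). If $X$ is a Hirzebruch surface or $\P^2$ there is nothing to prove; otherwise $X$ is a nontrivial blow-up, so $\deg X\le 7$ and Lemma~\ref{lemma_IF}(2) gives $I(X,A)=I(X)$. Choose an irreducible $(-1)$-curve $E$ on $X$ (the choice will be made carefully below) and let $p\colon X\to X'$ be its contraction. Since $[E]\in I(X,A)$, Lemma~\ref{lemma_IFdef} produces a cyclic segment $[k,\ldots,l]$ with $E=A_{k\ldots l}$ and a unique index $m\in[k,\ldots,l]$ such that $A_m^2=-1$ and $A_i^2=-2$ for the remaining $i\in[k,\ldots,l]$; for part (1) we may in addition apply a cyclic shift (which preserves exceptionality) so that $[k,\ldots,l]\subset[1,\ldots,n-1]$, while for part (2) this must instead be secured by the choice of $E$. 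The inductive step then reduces to: transform $A$ into a toric system $B$ with $B_m=E$ using only transpositions, in such a way that every toric system appearing along the way is exceptional (for part (1)) resp.\ strong exceptional (for part (2)). Granting this, $B=\augm_m(C)$ for a toric system $C$ on $X'$ by Proposition~\ref{prop_elemaugm}; $C$ is again exceptional (resp.\ strong exceptional) by Proposition~\ref{prop_augmexc} and of the first kind by Lemma~\ref{lemma_bld1kind}; the induction hypothesis applied to $C$, Definition~\ref{def_augm4}, and running the transpositions backwards (again through exceptional resp.\ strong exceptional systems) finish the argument.

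The reordering is the one used in the proof of Lemma~\ref{lemma_IFdef}. In the collection $(\O_X(D_1),\ldots,\O_X(D_n))$ attached to $A$, the line bundles $\O_X(D_k),\ldots,\O_X(D_m)$ are numerically completely orthogonal, and so are $\O_X(D_{m+1}),\ldots,\O_X(D_{l+1})$; we slide $\O_X(D_k)$ to the right end of the first segment and $\O_X(D_{l+1})$ to the left end of the second, so that the new $m$-th difference becomes $D_{l+1}-D_k=A_{k\ldots l}=E$. For this to be realized through transpositions of an \emph{exceptional} collection, the two segments must be genuine blocks, not merely numerical ones; by Proposition~\ref{prop_Akl2} this amounts to requiring that every sub-sum $A_{p\ldots q}$ with $[p,\ldots,q]\subset[k,\ldots,m-1]$ or $[p,\ldots,q]\subset[m+1,\ldots,l]$ --- each of which is a $(-2)$-class --- be strong left-orthogonal. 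Since $A$ is exceptional these sub-sums are already left-orthogonal (Proposition~\ref{prop_straightforward}), so by Lemma~\ref{lemma_effslo}(2) the only thing to exclude is that one of them is \emph{effective}. This is the crux of the proof: one must choose $E$ among the irreducible $(-1)$-curves of $X$ so that no such $(-2)$-sub-sum is effective. That this is always possible is a finite verification against the classification of admissible first-kind sequences (Table~\ref{table_csadm}) together with the explicit lists of the classes $A_{k\ldots l}\in I(X,A)$ (Table~\ref{table_ruby}). For such $E$ both segments are honest blocks, every transposition in the reordering keeps the collection exceptional, and $B$ is exceptional with $B_m=E$.

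For part~(2) one must additionally keep all intermediate systems \emph{strong} exceptional. The transpositions in the reordering have indices inside $[k,\ldots,l]$, and by Lemma~\ref{lemma_perm}(2) a transposition preserves strong exceptionality precisely when its index differs from $n$; hence it suffices to choose $E$ so that, moreover, $[k,\ldots,l]\subset[1,\ldots,n-1]$, i.e.\ the segment does not wrap around the index $n$. This is automatic when $A_n^2\neq-2$ (then $n$ cannot lie in an all-$(-2)$ flank of the index $m$), and when $A_n^2=-2$ the existence of a suitable $E$ is again read off from Table~\ref{table_ruby}. Note that for a strong exceptional toric system on a weak del Pezzo surface, Theorem~\ref{theorem_checkonlyminustwo}(2) already guarantees that $A_{p\ldots q}$ and $-A_{p\ldots q}$ are not effective for every interval $[p,\ldots,q]\subset[1,\ldots,n-1]$ with $A_p^2=\ldots=A_q^2=-2$, so that once $[k,\ldots,l]\subset[1,\ldots,n-1]$ the crux of the previous paragraph is automatic in this case. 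The reordering then runs through strong exceptional systems, $B=\augm_m(C)$, the surface $X'$ is again weak del Pezzo (contracting an irreducible $(-1)$-curve keeps $K^2>0$ and $-K$ nef), $C$ is strong exceptional of the first kind on $X'$, and induction applies.

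The step I expect to be the main obstacle is exactly the choice of the $(-1)$-curve $E$: one has to control which of the $(-2)$-sub-sums $A_{p\ldots q}$ flanking the index $m$ can be effective (so that the two segments of line bundles really are orthogonal blocks), while in part~(2) simultaneously keeping the whole segment $[k,\ldots,l]$ away from the index $n$. Everything else is formal manipulation with elementary augmentations and transpositions together with the left-orthogonality criteria of Propositions~\ref{prop_loslo},~\ref{prop_straightforward} and Lemma~\ref{lemma_effslo}; this single combinatorial point is the one that genuinely uses the classification in Tables~\ref{table_csadm} and~\ref{table_ruby}.
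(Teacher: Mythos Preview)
Your overall induction scheme matches the paper's, but your handling of the ``crux'' differs and, for part~(2), fails. For part~(1) you overstate what is needed: to run the transposition sequence from Lemma~\ref{lemma_IFdef} you do not need every flank sub-sum $A_{p\ldots q}$ to be strong left-orthogonal, only the ``outer'' ones $A_{k\ldots q}$ ($q<m$) and $A_{p\ldots l}$ ($p>m$), since those are exactly the $(-2)$-classes sitting in the position being transposed at each step. More importantly, no table verification or careful choice of $E$ is required: the paper proves directly, by an inner induction on the length of $[k,\ldots,l]$, that each such outer sum is not effective, because otherwise $E=A_{k\ldots l}$ would decompose as a sum of the effective $(-1)$-class $A_{k\ldots l-1}$ and the effective $(-2)$-class $A_l$ (resp.\ the analogous decomposition at the next step), contradicting irreducibility of $E$. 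This works for \emph{any} irreducible $(-1)$-curve $E$.

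For part~(2) your plan has a genuine gap. You propose to avoid the index $n$ by choosing $E$ so that $[k,\ldots,l]\subset[1,\ldots,n-1]$, but Example~\ref{example_bad} exhibits a cyclic strong exceptional (hence strong exceptional, first-kind) toric system on a weak del Pezzo surface for which \emph{every} irreducible $(-1)$-class $A_{k\ldots l}$ has $n\in[k,\ldots,l]$; so the ``finite verification from Table~\ref{table_ruby}'' you invoke cannot succeed. The paper instead allows cyclic shifts (which are part of Definition~\ref{def_augm4}) and proves, again via the irreducibility-of-$E$ trick, that when $l=n$ the shifted system $\sh(A)$ is itself strong exceptional: any new partial sum $A_{p\ldots n}$ with $p>m$ is a left-orthogonal $(-2)$-class, and were it effective then $E=A_{k\ldots p-1}+A_{p\ldots n}$ would be reducible. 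One then applies $\perm_{n-1}$ to $\sh(A)$ and continues the inner induction. Your observation that Theorem~\ref{theorem_checkonlyminustwo}(2) makes the flank condition automatic once $[k,\ldots,l]\subset[1,\ldots,n-1]$ is correct, but it does not help until you have a way to reach that situation through strong exceptional systems, and that step requires the shift argument.
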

\begin{proof}
(1) In fact, all intermediate toric systems from the proof of Theorem~\ref{theorem_1kindweak} are also exceptional.  
To make the arguments more clear, we give an independent proof. 

We prove that any exceptional toric system $A$ of the first kind on a rational surface $X$ is an exceptional augmentation by  induction in $\rank\Pic(X)$. Cases $X=\P^2$ or $X$ is a Hirzebruch surface are trivial.
Assume $X$ is the blow-up of $X'$ and $E$ is
the $(-1)$-curve. Note that  we have $\deg(X)\ge 3$ by Proposition~\ref{prop_csadm}. By Lemma~\ref{lemma_IF} we have $E = A_{k\ldots l}$ for some cyclic segment $[k,\ldots,l]$.
For any fixed $\rank\Pic(X)$ we argue by induction in the length of that cyclic segment. For $l=k$ we have that $A_k=E$ is a $(-1)$-curve, $A=\augm_k(C)$ by Proposition~\ref{prop_elemaugm} where the toric system $C$ on $X'$ is exceptional by Proposition~\ref{prop_augmexc}
and we may use induction in $\rank\Pic(X)$. For $l\ne k$ by 
Lemma~\ref{lemma_IFdef} we have 
\begin{equation}
\label{eq_klm}
A_m^2=-1\quad\text{for some $m$, $k\le m\le l$},\quad 
A_i^2=-2\quad\text{for}\quad i=k,k+1,\ldots,m-1,m+1,\ldots,l.
\end{equation}
We may assume without loss of generality that  $m\ne l$. The $(-2)$-class $A_l$ is not effective. Indeed, otherwise $E=A_{k\ldots l}=A_{k\ldots l-1}+A_l$ and thus is reducible, because $A_{k\ldots l-1}$ is a $(-1)$-class by Proposition~\ref{prop_Akl2} and hence is effective by Lemma~\ref{lemma_sqrt}(2). 
This gives a contradiction since a $(-1)$-curve is not linearly equivalent to any other effective divisor and by our assumption $A_{k\ldots l}$ is a class of an irreducible curve.

By Lemma~\ref{lemma_effslo}(2) we have $A_l\in R^{\rm lo}\setminus R^{\rm eff}=R^{\rm slo}$. It follows (see Lemma~\ref{lemma_perm}(1)) that $A'=\perm_l(A)$ is also an exceptional toric system of the first kind on $X$ and 
$(A')_{k\ldots l-1}=A_{k\ldots l}=E$. By the induction hypothesis, $A'$ is an exceptional augmentation.

(2) The proof is analogous to (1). We prove that any strong exceptional toric system $A=(A_1,\ldots,A_n)$ of the first kind on a weak del Pezzo surface $X$ is a strong exceptional augmentation. We argue by induction in $\rank\Pic (X)$. For the induction step, assume $X\to X'$ is a blow-up with an exceptional curve $E$. As in (1), we have $E=A_{k\ldots l}$ for some cyclic segment $[k,\ldots,l]\subset [1,\ldots,n]$. For any fixed $\rank\Pic (X)$ we use induction in the length of $[k,\ldots,l]$. As in (1), for $k=l$ we have that $A=\augm_k(C)$ where $C$ is a strong exceptional toric system on $X'$ by Propositions~\ref{prop_elemaugm} and~\ref{prop_augmexc}. One can use induction in 
$\rank\Pic (X)$.

For $k\ne l$ we can assume as in (1) that \eqref{eq_klm} holds, $m\ne l$ and $A_l$ is a strong left-orthogonal $(-2)$-class. In contrast with the proof of (1), now the toric system $\perm_l(A)$ may be not strong exceptional. If $l\ne n$ then $\perm_l(A)$ is strong exceptional by Lemma~\ref{lemma_perm}(2) and there are no problems. Now suppose $l=n$. We claim that the system $\sh(A)=(A_2,\ldots,A_n,A_1)$ is  strong exceptional (recall that $\sh(A)$ is automatically exceptional). Assume the contrary. Then by Proposition~\ref{prop_straightforward}(1),(2) there exists a divisor $D=A_{p\ldots q}$ with $2\le p\le q\le n$ which is left-orthogonal but not strong left-orthogonal (note that Proposition~\ref{prop_straightforward} is applied to $\sh(A)$, not $A$). If $q<n$ then $A_{p\ldots q}$ is strong left-orthogonal since $A$ is strong exceptional. Hence, $q=n=l$. If $p\le m$ then $D$ is a left-orthogonal $r$-class with 
$$r=D^2=-2+\sum_{i=p}^n(A_i^2+2)\ge -2+A_m^2=-1$$ by Proposition~\ref{prop_Akl2}. Therefore, $D$ is strong left-orthogonal by Proposition~\ref{prop_loslo}. Hence, $m+1\le p$. It means that $D$ is a $(-2)$-class (by Proposition~\ref{prop_Akl2}) which is left-orthogonal but not strong left-orthogonal. By Lemma~\ref{lemma_effslo}(2), $D$ is effective. Both $D$ and $A_{k\ldots p-1}$ (as a $(-1)$-class) are effective, therefore $A_{k\ldots l}=A_{k\ldots p-1}+D$ is reducible, a contradiction. Thus the toric system
$\sh(A)=(A_2,\ldots,A_n,A_1)$ is  strong exceptional. Now the toric system $$A''=\perm_{n-1}\sh(A)=(A_2,\ldots,A_{n-2},A_{n-1,n},-A_n,A_{1,n})$$ 
is  also strong exceptional (by Lemma~\ref{lemma_perm}(2)) and $(A'')_{k-1,\ldots,n-2}=A_{k\ldots n}=E$ is irreducible. We proceed by induction in $l-k$. 
\end{proof}

\section{Surfaces with cyclic strong exceptional toric systems}
\label{section_cyclicstrong}

In this section we classify 
surfaces with cyclic strong exceptional collections of line bundles having maximal length. We prove that such collections  can exist only on weak del Pezzo surfaces and  determine which types of weak del Pezzo surfaces possess such collections and which do not.

First we demonstrate that a surface with a cyclic strong exceptional collection of line bundles having maximal length must be rational. It is proven by Morgan Brown and Ian Shipman in \cite[Theorem 4.4]{BS} that a surface with a full strong exceptional collection of line bundles is rational. It seems that fullness is not really needed in the proof in \cite{BS}, nevertheless, we prefer to give a separate simple proof for the case of cyclic strong exceptional  collections.

\begin{lemma}
\label{lemma_rational}
Let $X$ be a smooth projective surface such that the sheaf $\O_X$ is exceptional. Assume $X$ admits a cyclic strong exceptional toric system. Then $X$ is a rational surface.
\end{lemma}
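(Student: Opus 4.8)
The plan is to reduce to Castelnuovo's rationality criterion: a smooth projective surface in characteristic zero is rational as soon as $h^1(X,\O_X)=0$ and $h^0(X,2K_X)=0$. The first condition is free, since $\O_X$ exceptional forces $\chi(\O_X)=1$ and $h^1(\O_X)=0$ (also $h^2(\O_X)=0$). So the entire content is the vanishing $h^0(2K_X)=0$, and for this I would exploit the cyclic structure of the given toric system $A=(A_1,\dots,A_n)$.

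First I would collect the numerical facts. By Proposition~\ref{prop_cyclic1type} a cyclic strong exceptional toric system satisfies $A_i^2\ge -2$ for all $i$, so $A$ is of the first kind (Definition~\ref{def_1kind}). If $X\cong\P^2$ there is nothing to prove, so assume $X\not\cong\P^2$; then $A^2$ is admissible by Proposition~\ref{prop_A2adm}, and Proposition~\ref{prop_csadm} gives $K_X^2=\deg X\ge 3$. Expanding $K_X^2=(-K_X)^2=\bigl(\sum_{i}A_i\bigr)^2$ and using $A_iA_{i+1}=1$, $A_iA_j=0$ otherwise (indices mod $n$) shows $\sum_{i=1}^n(A_i^2+2)=K_X^2>0$; since every summand is $\ge 0$, some index $i_0$ satisfies $A_{i_0}^2\ge -1$.

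Next I would manufacture an effective divisor and use it to kill $H^0(2K_X)$. The singleton $[i_0,\dots,i_0]$ is a cyclic segment, so $A_{i_0}$ is strong left-orthogonal by Proposition~\ref{prop_straightforward}(3); hence $h^1(A_{i_0})=h^2(A_{i_0})=0$ and, by Proposition~\ref{prop_D2}, $h^0(A_{i_0})=\chi(A_{i_0})=A_{i_0}^2+2\ge 1$, so $A_{i_0}$ is linearly equivalent to a nonzero effective divisor. Now take the complementary cyclic segment and set $B:=\sum_{i\ne i_0}A_i=-K_X-A_{i_0}$. Again $B$ is strong left-orthogonal by Proposition~\ref{prop_straightforward}(3), so $h^2(B)=0$; by Serre duality $h^0(K_X-B)=0$, and $K_X-B=2K_X+A_{i_0}$, so $h^0(2K_X+A_{i_0})=0$. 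Multiplying by a nonzero section of $\O_X(A_{i_0})$ embeds $H^0(2K_X)$ into $H^0(2K_X+A_{i_0})=0$, whence $h^0(2K_X)=0$. Together with $h^1(\O_X)=0$, Castelnuovo's rationality criterion yields that $X$ is rational.

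The step that genuinely needs cyclic (rather than ordinary) strong exceptionality is the strong left-orthogonality of $B$: it is the sum over a cyclic segment that wraps past the index $n$, which Proposition~\ref{prop_straightforward} only guarantees in the cyclic strong case; an ordinary strong exceptional toric system would control only segments inside $[1,\dots,n-1]$, and those need not be the complement of a conveniently chosen $A_{i_0}$. I do not anticipate any other obstacle: the remaining steps are routine Riemann--Roch and Serre duality manipulations, together with the already-established classification of admissible sequences of the first kind.
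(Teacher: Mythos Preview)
Your argument is correct and follows the same overall strategy as the paper: reduce to Castelnuovo's criterion, use Proposition~\ref{prop_cyclic1type} and Proposition~\ref{prop_A2adm} to see that $A^2$ is an admissible sequence of the first kind, and then invoke the classification (Proposition~\ref{prop_csadm}) to conclude something strong enough to force $h^0(2K_X)=0$.

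The only difference is in how the vanishing $h^0(2K_X)=0$ is extracted. The paper uses the explicit list in Table~\ref{table_csadm} to split $-K_X=D_1+D_2$ into two cyclic-segment sums with $D_i^2\ge -1$; each $D_i$ is then effective, so $-K_X$ (hence $-2K_X$) is a nonzero effective class and $h^0(2K_X)=0$ follows. You instead use only the single consequence $K_X^2>0$ of Proposition~\ref{prop_csadm}: the identity $\sum_i(A_i^2+2)=K_X^2$ gives one index $i_0$ with $A_{i_0}^2\ge -1$, and then strong left-orthogonality of the complementary segment $B=-K_X-A_{i_0}$ yields $h^0(2K_X+A_{i_0})=h^2(B)=0$ by Serre duality, after which effectiveness of $A_{i_0}$ kills $h^0(2K_X)$. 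Your route is a touch cleaner in that it avoids inspecting the table for a two-piece splitting, though both arguments ultimately rest on the same classification result.
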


\begin{proof}
Assume that $(A_1,\ldots,A_n)$ is a cyclic strong exceptional toric system on $X$.  By Proposition~\ref{prop_A2adm}, the sequence $A^2$ is admissible. 
By Proposition~\ref{prop_cyclic1type}, we have  $A_i^2\ge -2$ for all $i$. Therefore, $A^2$ is of the first kind, see Table~\ref{table_csadm}. Then we can group $A_1+\ldots+A_n$ into two groups $D_1:=A_1+\ldots+A_m, D_2:=A_{m+1}+\ldots+A_n$ for some $m$ such that $D_1,D_2$ are both non-zero strong left-orthogonal divisors and $D_1^2\ge -1, D_2^2\ge -1$. Thus $h^0(D_i)=D_i^2+2>0$ for $i=1,2$. Then $D_1,D_2$ are both effective, hence $-K_X=A_1+\ldots+A_n$ is effective and $-2K_X$ is effective and nonzero. It follows immediately that $h^0(2K_X)=0$. Recall that $h^1(\O_X)=0$ by the assumptions. By Castelnuovo's rationality criterion, we conclude that $X$ is a rational surface.
\end{proof}

\begin{predl} 
\label{prop_cyclicwdp}
Let $X$ be a smooth projective surface such that the sheaf $\O_X$ is exceptional. Assume $X$ admits a cyclic strong exceptional toric system. Then~$X$ is a weak del Pezzo surface of degree $\deg(X)\ge 3$. 
\end{predl}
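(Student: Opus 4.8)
The plan is to bootstrap from Lemma~\ref{lemma_rational}, which already tells us that $X$ is rational, and then extract the two numerical inequalities $K_X^2 > 0$ and $-K_X$ nef from the structure of a cyclic strong exceptional toric system of the first kind. First I would recall that by Proposition~\ref{prop_cyclic1type} all $A_i^2 \ge -2$, so by Proposition~\ref{prop_csadm} the sequence $A^2$ appears (up to shift and symmetry) in Table~\ref{table_csadm}; in particular $n \le 9$, hence $\deg(X) = 12 - n \ge 3$, which settles the degree bound immediately once we know $X$ is a weak del Pezzo surface (and of course $K_X^2 = 12 - n > 0$ directly). So the real content is to show $-K_X$ is nef.

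To prove $-K_X$ is nef I would argue that $-K_X \cdot C \ge 0$ for every irreducible curve $C \subset X$. Since $-K_X = A_1 + \dots + A_n$, and since $A$ is cyclic strong exceptional, every cyclic segment sum $A_{k\ldots l}$ is strong left-orthogonal by Proposition~\ref{prop_straightforward}(3); moreover every single $A_i$ is strong left-orthogonal with $A_i^2 \ge -2$. The key step is to use Proposition~\ref{prop_loslo} together with Proposition~\ref{prop_Akl2} and Corollary~\ref{cor_minustwo}: for a cyclic segment the divisor $A_{k\ldots l}$ is an $r$-class with $r \ge -2$, and being strong left-orthogonal, if $r = -2$ then neither $A_{k\ldots l}$ nor $-A_{k\ldots l}$ is effective (by the $r=-2$ row of Table~\ref{table_loslo}), while if $r \ge -1$ then $A_{k\ldots l}$ is effective by Lemma~\ref{lemma_sqrt}(2). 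The plan is to use the decomposition $-K_X = D_1 + D_2$ with $D_1 = A_{1\ldots m}$, $D_2 = A_{m+1 \ldots n}$ chosen (using Table~\ref{table_csadm}) so that $D_1^2 \ge -1$ and $D_2^2 \ge -1$; then both $D_i$ are effective, and in fact one can iterate this splitting to write $-K_X$ as a sum of effective strong left-orthogonal $r$-classes with $r \ge -1$, each of which moves in a base-point-free-ish family. Concretely, for any irreducible $C$, if $-K_X \cdot C < 0$ then $C$ must be a common component of every such effective representative, forcing $C$ to be a fixed component of $-K_X$; but an $r$-class with $r \ge -1$ that is effective has $h^0 = r + 2 \ge 1$ and, being a $(-1)$- or nonnegative class, one checks it has no fixed component obstructing nefness — here I would lean on the toric shadow via Theorem~\ref{theorem_HP} and the remark after Proposition~\ref{prop_csadm}, which identifies $A^2$ with the self-intersection sequence of a toric surface whose anticanonical class is nef precisely because the sequence is of the first kind.

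The cleanest route, which I expect to be the one the authors take, is in fact this last observation: by Theorem~\ref{theorem_HP} there is a smooth projective toric surface $Y$ with torus-invariant divisors $T_i$ satisfying $T_i^2 = A_i^2$, and since $A^2$ is admissible of the first kind, $-K_Y = \sum T_i$ is nef by the cited \cite[Prop.~2.5]{HP}; but nefness of $-K$ on a rational surface is a purely numerical condition on the lattice $(\Pic, \text{intersection form}, K)$ together with the classes of negative curves, and the toric system identifies these data for $X$ and $Y$ (as explained in the remark after Theorem~\ref{theorem_HP}), so $-K_X$ is nef as well. Combined with $K_X^2 = 12 - n \ge 3 > 0$, this shows $X$ is a weak del Pezzo surface of degree $\ge 3$. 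The main obstacle is the last transfer step: one must be careful that "the classes of negative curves" really do match up, i.e.\ that the isometry $\Pic(X) \to \Pic(Y)$ coming from the common sequence $A^2$ carries the negative curves of $X$ into the effective cone of $Y$ (equivalently, that effectivity of the relevant $r$-classes with $r \le -1$ is forced by the toric system rather than by the geometry) — this is where Lemma~\ref{lemma_sqrt}, Proposition~\ref{prop_loslo} and the strong left-orthogonality of cyclic segments do the real work, ruling out any irreducible curve $C$ with $-K_X \cdot C < 0$.
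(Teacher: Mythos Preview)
Your proposal has a real gap in the step you yourself flag as ``the main obstacle.'' The isometry $\Pic(X)\to\Pic(Y)$ coming from Theorem~\ref{theorem_HP} and the remark after it preserves the intersection form, the canonical class, and the classes $A_i\leftrightarrow T_i$; it does \emph{not} preserve effective cones or the sets of negative curves. Your claim that ``the toric system identifies these data for $X$ and $Y$'' including ``the classes of negative curves'' is precisely what is not established, and nefness of $-K$ depends exactly on this. The final sentence, invoking Lemma~\ref{lemma_sqrt}, Proposition~\ref{prop_loslo} and strong left-orthogonality to ``rule out any irreducible curve $C$ with $-K_X\cdot C<0$'', is a hope rather than an argument: an arbitrary irreducible curve $C$ need not be any $A_{k\ldots l}$, and you give no mechanism connecting $C$ to the toric system.

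The paper does \emph{not} take this route. It first invokes Corollary~\ref{corollary_f012} (which rests on the augmentation machinery of Section~\ref{section_firstkind}, in particular Corollary~\ref{cor_cyclicblowup}) to conclude that $X$ is $\mathbb F_0$, $\mathbb F_2$, or a blow-up of $\P^2$ in at most $6$ points. It then verifies directly that the blow-up is in almost general position: assuming some center $P_m$ lies on a $(-2)$-curve of the intermediate surface, it writes down an explicit divisor class (a $(-1)$-class such as $L-E_j-E_m$, or the $0$-class $2L-E_1-E_2-E_3-E_4$) which must be strong left-orthogonal by Lemma~\ref{lemma_IF} and Proposition~\ref{prop_straightforward}, yet has $h^0>\chi$ because of the extra effective class produced by the bad blow-up. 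So the paper's argument is case-based and uses the augmentation/blow-down theory in an essential way, rather than any transfer from a toric shadow.
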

\begin{proof}
By Lemma~\ref{lemma_rational}, $X$ must be a rational surface. By Corollary~\ref{corollary_f012}, $X$ is either $\mathbb F_0, \mathbb F_2$, or a blow-up of $\P^2$.
Since $\mathbb F_0, \mathbb F_2$ are weak del Pezzo surfaces, we have to consider $X$ being a blow-up of $\P^2$. By \cite[Prop. 8.1.23]{Do} or \cite[Prop. 0.4]{CT}, it suffices to show that this blow-up is in \emph{almost general position}. It means that there is a sequence of $s\le 8$ blow-ups
$$X=X_s\to X_{s-1}\to\ldots \to X_1\to X_0=\P^2$$
where $X_m\to X_{m-1}$ is a blow-up of one point $P_m\in X_{m-1}$ which does not lie on a smooth rational $(-2)$-curve on $X_{m-1}$.

Note that in our case $s\le 6$ and $\deg(X)\ge 3$ because $X$ possesses a cyclic strong exceptional toric system, see Proposition~\ref{prop_csadm}. Denote by $L,E_1,\ldots,E_m$ the standard basis in $\Pic(X_m)$. Then for $0\le m\le 5$ all $(-2)$-classes in $\Pic(X_m)$ are of the form $E_i-E_j$ and $\pm(L-E_i-E_j-E_k)$ (see Appendix A or \cite[a table in 25.5.2]{Ma}). Suppose that for some $0\le m\le 5$ the point $P_m$ belongs to a $(-2)$-curve $R\subset X_{m-1}$ which is of the form $E_i-E_j$ or $L-E_i-E_j-E_k$, where $i<j<k<m$.

In the first case, consider the divisor 
$$D=L-E_j-E_m=(L-E_i)+(E_i-E_j-E_m).$$
This divisor is a $(-1)$-class, so $D\in I(X,A)$ by Lemma~\ref{lemma_IF} (where $A$ is a cyclic strong exceptional toric system on $X$).
It follows from Proposition~\ref{prop_straightforward} and Lemma~\ref{lemma_IFdef} that $D$ is strong left-orthogonal. On the other hand, the class $E_i-E_j-E_m$ is effective
and we have (using Proposition~\ref{prop_D2})
$$h^0(D)\ge h^0(L-E_i)=2>D^2+2=\chi(D),$$
thus $D$ cannot be strong left-orthogonal. 

In the second case 
the class $L-E_i-E_j-E_k-E_m\in\Pic(X_m)$ is  represented by a $(-3)$-curve $C\subset X_m$ (because the class $L-E_i-E_j-E_k\in\Pic(X_{m-1})$ is represented by a $(-2)$-curve containing the point $P_m$). Let $f_l\colon X_l\to\P^2$ denote the blow-down. Then $f_m(C)$ is a line on $\P^2$ and
$$L=f_m^*L=f_m^*f_{m}(C)=C+\sum_{l=1}^m c_lE_l,$$
where $c_l>0$ if and only if $f_l(P_l)\in f_m(C)$, otherwise $c_l=0$.
On the other hand, $L=C+E_i+E_j+E_k+E_m$.
It follows that for any  $l\le m, l\notin\{i,j,k,m\}$ the point $f_l(P_l)$ does not belong to $f_m(C)$, while for $l\in\{i,j,k,m\}$ we have $f_l(P_l)\in f_m(C)$. Hence the blow-ups of  $P_i,P_j,P_k,P_m$ commute with the blow-ups of other $P_l$-s. Changing the order of blow-ups we can assume that $(i,j,k,m)=(1,2,3,4)$ and other blow-ups are performed after these four. By Corollary~\ref{cor_cyclicblowup},
$X_4$ also has a cyclic strong exceptional toric system, denote it by  $(A_1,\ldots,A_7)$. We claim that there exists a cyclic segment $[p,\ldots, q]\subset [1,\ldots, 7]$ such that  $A_{p\ldots q}=2L-E_1-E_2-E_3-E_4$. It would follow from Proposition~\ref{prop_straightforward} that the divisor $D=2L-E_1-E_2-E_3-E_4$ is strong left-orthogonal. 
Since the divisor $D-L=L-E_1-E_2-E_3-E_4$ is effective, we have 
$$h^0(X_4,D)\ge h^0(X_4,L)=3>D^2+2=\chi(X_4,D),$$
and $D$ cannot be strong left-orthogonal.

Now we prove the claim. Note that $D$ is a $0$-class in $\Pic(X_4)$. 
Actually we will check that all $0$-classes in  $\Pic(X_4)$ are of the form $A_{k\ldots l}$. By writing $D=aL+\sum_{i=1}^4b_iE_i$ and solving equations $D^2=0, D\cdot K_X=-2$, one can check that there are totally  five $0$-classes: $2L-E_1-E_2-E_3-E_4$ and $L-E_i$, $i=1,2,3,4$.
By Proposition~\ref{prop_csadm}, the sequence $(A_1^2,\ldots,A_7^2)$ can be either 
$$(-1,-1,-1,-2,-1,-2,-1)\quad\text{or}\quad (-2,-1,-2,-2,-1,-1,0)$$ 
(up to a cyclic shift and a symmetry). 
In both cases one can check that the following 5-tuples of divisors are pairwise different $0$-classes:
$$(B_1,\ldots,B_5)=(A_{12},A_{23},A_{234}, A_{345}, A_{3456}),\qquad (C_1,\ldots,C_5)=(A_{56}, A_{456}, A_{3456}, A_7, A_{71})$$ 
respectively. Indeed, $B_i$ and $C_i$ are $0$-classes by Proposition~\ref{prop_Akl2}. 
In the first case the matrix $(A_i\cdot B_j)_{i\in\{1,2,5\}, 1\le j\le 5}$ has pairwise different columns, therefore $B_1,\ldots,B_5$ are also different. In the second case we use the matrix $(A_i\cdot C_j)_{i\in\{2,5,6\}, 1\le j\le 5}$. 
The matrices can be easily computed using the definitions of a toric system and of $A_{k\ldots l}$:
$$(A_i\cdot B_j)_{i\in\{1,2,5\}, 1\le j\le 5}=
\begin{pmatrix}
0&1&1&0&0\\
0&0&0&1&1\\
0&0&1&0&1
\end{pmatrix},\qquad
(A_i\cdot C_j)_{i\in\{2,5,6\}, 1\le j\le 5}=
\begin{pmatrix}
0&0&1&0&1\\
0&1&1&0&0\\
0&0&0&1&1
\end{pmatrix}.$$
\end{proof}

\medskip
Now we classify surfaces possessing a cyclic strong exceptional toric system.
We refer to Appendix~A for the sets of $(-2)$-curves on all types of weak del Pezzo surfaces of degree $\ge 3$. We provide explicit examples of cyclic strong exceptional toric systems on any surface where such a toric system exists. 
Recall notation \eqref{eq_Lij}:
\begin{equation*}
E_{i_1\ldots i_k}=E_{i_1}+\ldots+E_{i_k},\quad L_{i_1\ldots i_k}=L-E_{i_1\ldots i_k}.
\end{equation*}

\begin{predl}
\label{prop_classification}
Toric systems from Table~\ref{table_yes} are cyclic strong exceptional. Therefore weak del Pezzo surfaces from Table~\ref{table_yes} admit cyclic strong exceptional toric systems. Weak del Pezzo surfaces~$X$ from Table~\ref{table_no} do not admit cyclic strong exceptional toric systems. 
\end{predl}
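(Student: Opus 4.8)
\emph{Outline of the argument.} The positive assertion is a direct application of the effective-cone criterion Theorem~\ref{theorem_checkonlyminustwo}(4) combined with the tables of $(-2)$-curves in Appendix~A. The negative assertion is proved by contradiction: a cyclic strong exceptional toric system on $X$ is of the first kind (Proposition~\ref{prop_cyclic1type}), hence has one of the short explicit self-intersection patterns of Proposition~\ref{prop_csadm}, and one shows that on the surfaces of Table~\ref{table_no} every such pattern is forced to produce a forbidden $(-2)$-segment.

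\emph{Toric systems of Table~\ref{table_yes} are cyclic strong exceptional.} For each row one first checks that the listed sequence $A=(A_1,\ldots,A_n)$ satisfies the axioms of a toric system, i.e.\ $A_iA_{i+1}=1$, $A_iA_j=0$ for $j\ne i,i\pm1$, and $\sum_iA_i=-K_X$; this is an elementary computation in the intersection lattice of $X$. Computing $A^2$ one recognizes it as the admissible first-kind sequence of the appropriate type from Table~\ref{table_csadm}, so $A_i^2\ge-2$ for all $i$ and Theorem~\ref{theorem_checkonlyminustwo}(4) applies. It then remains to check, for each cyclic segment $[k,\ldots,l]\subsetneq[1,\ldots,n]$ with $A_i^2=-2$ for all $i\in[k,\ldots,l]$, that neither $A_{k\ldots l}$ nor $-A_{k\ldots l}$ is effective. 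The runs of consecutive $-2$'s in the sequences of Table~\ref{table_csadm} have length at most three, so only finitely many such segments occur; for each, $A_{k\ldots l}$ is a $(-2)$-class by Corollary~\ref{cor_minustwo}, and by Proposition~\ref{prop_roots} (or Lemma~\ref{lemma_effslo}(1)) $\pm A_{k\ldots l}$ is effective if and only if it is a sum of the irreducible $(-2)$-curves of $X$. Since these are listed type by type in Appendix~A, the verification is mechanical.

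\emph{Surfaces of Table~\ref{table_no} admit no cyclic strong exceptional toric system.} Suppose $A=(A_1,\ldots,A_n)$ were one on such an $X$. By Proposition~\ref{prop_cyclic1type} it is of the first kind, so by Proposition~\ref{prop_csadm} the pattern $A^2$ is, up to cyclic shift and symmetry, one of the finitely many sequences of Table~\ref{table_csadm} of degree $\deg(X)\ge3$. By the remark after Theorem~\ref{theorem_HP}, once $A^2$ is fixed the pair $(\Pic(X),A)$ is determined up to a symmetry of the lattice $\Pic(X)$ fixing $K_X$, and there are only finitely many such symmetries; thus for each surface and each admissible pattern there are finitely many realizations to inspect. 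For every type of Table~\ref{table_no}, every admissible first-kind pattern, and every realization one exhibits a cyclic segment $[k,\ldots,l]$ with $A_i^2=-2$ for all $i$ such that $A_{k\ldots l}$ or $-A_{k\ldots l}$ lies in $R^{\rm eff}(X)$, i.e.\ is a sum of the $(-2)$-curves of $X$ from Appendix~A; by Theorem~\ref{theorem_checkonlyminustwo}(4) this contradicts cyclic strong exceptionality. To make the casework finite one invokes Corollary~\ref{cor_cyclicblowup}: the property of carrying a cyclic strong exceptional toric system is stable under blow-down, so its negation is stable under blow-up, and it suffices to rule out the types of Table~\ref{table_no} that are minimal with respect to blow-down; alternatively, Corollary~\ref{cor_cyclicaugm} lets one realize $A$ as a cyclic strong exceptional augmentation and track, along the chain of blow-ups producing $X$, the position of the $(-2)$-curves of $X$ among the classes $A_{k\ldots l}$.

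\emph{The main obstacle} is the universal quantifier in the last step: one must show that \emph{no} realization works, i.e.\ that no lattice symmetry (equivalently, no admissible choice in the augmentation data compatible with the type of $X$) moves all the relevant $(-2)$-classes $A_{k\ldots l}$ out of $R^{\rm eff}(X)\cup(-R^{\rm eff}(X))$ into $R^{\rm slo}(X)$. This is precisely what separates Table~\ref{table_no} from the nearby entries of Table~\ref{table_yes} (contrast, say, $\mathbb F_2$, where the pattern $(0,0,0,0)$ is realizable and keeps every $A_i$ off the $(-2)$-curve, with a type in which every first-kind pattern is forced onto an effective root), and it is carried out using the explicit root-system and $(-2)$-curve data of Appendix~A together with the combinatorial description of the divisors $A_{k\ldots l}$ afforded by Proposition~\ref{prop_straightforward} and Lemmas~\ref{lemma_IFdef}--\ref{lemma_IF}.
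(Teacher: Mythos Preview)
Your positive half is essentially the paper's argument: verify the toric-system axioms, note that $A^2$ is of the first kind, and apply Theorem~\ref{theorem_checkonlyminustwo}(4) together with the $(-2)$-curve tables in Appendix~A. The paper also observes that the toric systems in Table~\ref{table_yes} are nested elementary augmentations of one another, and that it suffices to do the $R^{\rm eff}$-check on the types with maximal $R^{\rm eff}$ in each degree; these shortcuts are convenient but not essential.

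For the negative half your outline is correct in principle but differs from the paper and leaves the hardest step unexecuted. You propose, for each type of Table~\ref{table_no}, to enumerate all lattice realizations of each admissible first-kind pattern and exhibit a forbidden segment in each; you then note that Corollary~\ref{cor_cyclicblowup} lets one restrict to blow-down-minimal types. The paper does exactly this last reduction, but then observes concretely that the minimal types in Table~\ref{table_no} are precisely $X_{5,A_3}$ and $X_{5,A_4}$ (every other entry is shown, via the right-hand columns of Table~\ref{table_no}, to blow down to one of these). For those two surfaces the paper avoids any enumeration of realizations: in degree~$5$ the only first-kind patterns are $7a$ and $7b$, and in either pattern there are indices $i$ with $A_i^2=A_{i+2}^2=-2$, forcing two strong-left-orthogonal $(-2)$-classes $D_1,D_2$ with $D_1\cdot D_2=0$. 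One then computes $R^{\rm slo}(X_{5,A_4})=\emptyset$ and $R^{\rm slo}(X_{5,A_3})=\pm\{L_{123},L_{124},L_{134},L_{234}\}$, and checks that no two of the latter are orthogonal. This replaces your ``universal quantifier'' obstacle by a two-line computation in $R^{\rm slo}$, and is what makes the negative half go through cleanly.
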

\begin{table}[h]
\begin{center}
\caption{Cyclic strong exceptional toric systems}
\label{table_yes}
\begin{tabular}{|p{1cm}|p{6.7cm}|p{9cm}|}
 \hline
 degree & types & toric system \\
 \hline
 $9$ & $\P^2$ & $L,L,L$  \\
 \hline
 $8$ & $\mathbb F_0$ & $H_1,H_2,H_1,H_2$  \\
 \hline
 $8$ & $\mathbb F_1$ & $L_1,E_1,L_{1},L$  \\
 \hline
 $8$ & $\mathbb F_2$ & $F,S-F,F,S-F$ (where $F^2=0, S^2=2, FS=1$)  \\
 \hline
 $7$ & any & $L_{1},E_1,L_{12},E_2,L_{2}$  \\
 \hline
 $6$ & any & $L_{13},E_1,L_{12},E_2,L_{23},E_3$  \\
 \hline
 $5$ &  $(\emptyset);\,\, (A_1);\,\, (2A_1);\quad (A_2);\quad (A_1{+}A_2)$ &    ${L_{134}},E_4,{E_1{-}E_4},L_{12},E_2,L_{23},E_3$\\
 \hline
 $4$ & $(\emptyset); \,\,(A_1);\,\, (2A_1,9);\,\, (2A_1,8);\quad (A_2);$ &  ${L_{134}},E_4,{E_1{-}E_4},L_{12},{E_2{-}E_5},E_5,{L_{235}},E_3$  \\
     & $(3A_1);\quad (A_1{+}A_2); \quad (A_3,4); \quad (4A_1);$ & \\
     & $(2A_1{+}A_2);\quad (A_1{+}A_3);\quad (2A_1{+}A_3)$ & \\  
     \hline
 $3$ & $(\emptyset); \quad (A_1);\quad (2A_1);\quad   (A_2) ; \quad (3A_1) ;$ & $E_2{-}E_4,L_{125},E_5,E_1{-}E_5,L_{136},E_6,E_3{-}E_6,L_{234},E_4$   \\  
 & $(A_1{+}A_2); \quad  (4A_1);\quad (2A_1{+}A_2);$ & \\ 
   & $(2A_2);\quad (A_1{+}2A_2);\quad (3A_2)$ & \\
   \hline      
\end{tabular}
\end{center}
\end{table}

\begin{table}[h]
\begin{center}
\caption{Surfaces  with no cyclic strong exceptional toric systems}
\label{table_no}
\begin{tabular}{|p{2cm}|p{4cm}||p{3cm}|p{3cm}|}
 \hline
 $\deg(X)$ & type of $X$ & type of $X'$ & $P\in X'$ \\
 \hline
 $5$ & $A_3$ & & \\  \hline
 $5$ & $A_4$ & &\\  \hline
 $4$ & $A_3,5$ & $A_3$ & general \\  \hline
 $4$ & $A_4$ & $A_4$ & general \\  \hline
 $4$ & $D_4$ & $A_3$ & general on $L_{12}$\\  \hline
 $4$ & $D_5$ & $A_4$ & general on $E_4$ \\  \hline
 $3$ & $A_3$ & $A_3,5$ & general \\  \hline
 $3$ & $A_1+A_3$ & $A_3,5$ & general on $E_1$ \\  \hline

 $3$ & $A_4$ & $A_4$ & general \\  \hline
 $3$ & $D_4$ & $D_4$ & general \\ \hline
 $3$ & $2A_1+A_3$ & $A_3,5$ & $E_1\cap Q$  \\  \hline
 $3$ & $A_1+A_4$ & $A_4$ & general on $Q$ \\  \hline
 $3$ & $A_5$ & $A_4$ & general on $E_5$ \\  \hline
 $3$ & $D_5$ & $D_5$ & general \\  \hline
 $3$ & $A_1+A_5$ & $A_4$ & $E_5\cap Q$ \\  \hline
 $3$ & $E_6$ & $D_5$ & general on $E_5$\\  \hline
\end{tabular}
\end{center}
\end{table}

\begin{proof}
First note that the sequences from Table~\ref{table_yes} are toric systems. Moreover, it is easy to see that they are obtained from each other by  elementary augmentations; thus any sequence  in Table~\ref{table_yes} is a standard augmentation. Next one needs to check that they are cyclic strong exceptional. Any toric system $A$ in Table~\ref{table_yes} is of the first kind. According to Theorem~\ref{theorem_checkonlyminustwo}, we have to check that all divisors of the form $A_{k\ldots l}$ where $[k,\ldots, l]\subset [1,\ldots, n]$ is a cyclic segment such that $A_k^2=A_{k+1}^2=\ldots=A_l^2=-2$ are neither effective nor anti-effective. 
We see that for $\deg(X)\ge 6$ there are no such divisors. For  $\deg(X)=5,4,3$ we have the following divisors:
\begin{align*}
\deg(X)=5\colon & L_{134},E_1-E_4; \\
\deg(X)=4\colon & L_{134},E_1-E_4, E_2-E_5, L_{235};\\
\deg(X)=3\colon & E_2-E_4,L_{125},L_{145},E_1-E_5,L_{136},L_{356},E_3-E_6,L_{234},L_{246}.
\end{align*}
We refer to Tables~\ref{table_d5}, \ref{table_d4}, \ref{table_d3} for the sets $R^{\rm irr}(X)$ of $(-2)$-curves on weak del Pezzo surfaces of degrees $5,4,3$ (see notation in Section~\ref{subsection_rclasses}). Using these Tables and Proposition~\ref{prop_roots}, one has to check that the divisors listed above are not in $\pm R^{\rm eff}$.
It may be useful to note that $R^{\rm eff}(X_{5,A_1+A_2})$ is maximal among all surfaces of degree $5$ from Table~\ref{table_yes}, therefore we need to check the above claim  only for $X_{5,A_1+A_2}$. Likewise, for degree $4$ the maximal $R^{\rm eff}$ is owned by the surfaces $X_{4,2A_1+A_3}$ and $X_{4,2A_1+A_2}$. For degree $3$ the surfaces with maximal $R^{\rm eff}$ in Table~\ref{table_yes} are $X_{3,3A_2}$ and $X_{3,4A_1}$.

Now we prove the negative part of the Proposition.
First, we check that the surfaces $X_{5,A_3}$ and $X_{5,A_4}$ have no cyclic strong exceptional toric system. Assume the contrary, $A$ is such system. Then by Propositions~\ref{prop_cyclic1type} and \ref{prop_csadm}, $A^2$ is $(-1,-1,-1,-2,-1,-2,-1)$ or $(-2,-1,-2,-2,-1,-1,0)$ (up to a shift and a symmetry). In both cases we see that there exist two $(-2)$-classes $D_1=A_i$ and $D_2=A_{i+2}$ which are strong left-orthogonal and such that $D_1\cdot D_2=0$. On the other hand, we have $R^{\rm slo}(X_{5,A_3})=\pm \{L_{123},L_{124},L_{134},L_{234}\}$ and $R^{\rm slo}(X_{5,A_4})=\emptyset$ due to Lemma~\ref{lemma_effslo}(1) and Table~\ref{table_d5}. It is easy to see that such $D_1$ and $D_2$ cannot exist.

We claim that all other surfaces $X$ from Table~\ref{table_no} are blow-ups of  $X_{5,A_3}$ or $X_{5,A_4}$. It would follow then from Corollary~\ref{cor_cyclicblowup} that they admit no cyclic strong exceptional toric systems. In the right two columns of Table~\ref{table_no} we present a surface $X'$ and a point $P\in X'$ such that the blow-up of $X'$ at $P$  has the same type as $X$ (here $Q$ denotes the $(-1)$-class $2L-E_{12345}\in\Pic(X_4)$). We refer to \cite{CT}, Propositions 6.1 and 8.4, for the verifying.

Indeed, let $p\colon X\to X'$ be the blow-up of a point $P\in X'$ with the exceptional divisor $E$. Then one has
$$R^{\rm irr}(X)=p^*(R^{\rm irr}(X'))\bigsqcup \{p^*(C)-E\mid C\in I^{\rm irr}(X'),\,\, C\ni P \}.$$
The r.h.s. of the above formula can be read from the diagrams in 
\cite[Propositions 6.1 and 8.4]{CT}.

\end{proof}

\section{An application to dimension of $\D^b(\coh X)$}
\label{section_application}
The paper \cite{BF} by Matthew Ballard and David Favero initiated the study of the  relation between the dimension of triangulated categories in the sense of Rafael Rouquier \cite{Rou} and full cyclic strong exceptional collections. 
Recall the definition of dimension. For an object 
$G$ of a triangulated category~$\TT$ define full subcategories $\langle G\rangle_i$, $i\ge 0$ as follows. Say that $F\in \langle G\rangle_0$ if $F$ is a  direct summand  in some finite direct sum of shifts of $G$. For $i>0$ say that $F\in\langle G\rangle_i$ if there exists an exact triangle $F_0\to F'\to F_{i-1}\to F_0[1]$ with
$F_0\in\langle G\rangle_0$, $F_{i-1}\in\langle G\rangle_{i-1}$, and $F$ is a direct summand in $F'$. Denote $\langle G\rangle=\cup_i \langle G\rangle_i$. Object $G$
is called a \emph{classical generator} if $\TT=\langle G\rangle$. Generator $G$ is said to have \emph{generation time} $n$ if $n$ is the minimal number such that $\TT=\langle G\rangle_n$. Rouquier defined dimension $\dim \TT$ of a triangulated category $\TT$ as the minimal possible generation time for all  classical generators in~$\TT$. 
\begin{conjecture}[Orlov]
For a smooth projective variety $X$ one has 
 $$\dim \D^b(\coh(X))=\dim X.$$
\end{conjecture}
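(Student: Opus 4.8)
The plan is to prove the two inequalities $\dim\D^b(\coh(X))\ge\dim X$ and $\dim\D^b(\coh(X))\le\dim X$ separately, as they are of entirely different character. First I would record that the lower bound is unconditional: by Rouquier's theorem \cite{Rou}, for a smooth projective variety $X$ over a field every classical generator $G$ of $\D^b(\coh(X))$ has generation time at least $\dim X$. The argument is local — restricting to a closed point $x\in X$, the skyscraper $\k(x)$ cannot be built from $G$ in fewer than $\dim X$ steps, since $\Ext^\bullet(\k(x),\k(x))$ is nonzero in degrees $0,\ldots,\dim X$ and one applies the (converse) ghost lemma to detect the required number of cones. Taking the infimum over all generators yields $\dim\D^b(\coh(X))\ge\dim X$, with no further hypotheses on $X$.

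The entire content of the conjecture therefore lies in the upper bound: one must exhibit a single classical generator $G$ with $\D^b(\coh(X))=\langle G\rangle_{\dim X}$. The natural strategy is the diagonal (Fourier--Mukai) approach. If the structure sheaf $\O_\Delta$ of the diagonal $\Delta\subset X\times X$ admits a resolution by objects of the form $p_1^*A_i\otimes p_2^*B_i$ of length $\dim X$, then convolution with this kernel expresses an arbitrary object of $\D^b(\coh(X))$ as built from the finitely many $B_i$ in at most $\dim X$ steps, so $G=\oplus_i B_i$ would generate in the required time. Rouquier's own construction produces such a resolution of length $2\dim X$, which is exactly why $\dim\D^b(\coh(X))\le 2\dim X$ holds in general; the conjecture asks us to halve this.

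The hard part — and the reason this remains a conjecture rather than a theorem — is precisely this upper bound. There is no known construction of a length-$\dim X$ resolution of the diagonal for an arbitrary smooth projective variety, and it is not even clear that the optimal generator must arise from the diagonal at all. The conjecture is accessible only when an explicit good generator is at hand: for curves, for varieties carrying a tilting object (equivalently, a full strong exceptional collection) of the correct homological dimension, and — as the present paper establishes for surfaces admitting a full cyclic strong exceptional collection of line bundles (Theorem~\ref{theorem_intro6}) — by computing the generation time of the tilting generator directly. A general proof would require either a genuinely new method for bounding generation time from above, or a uniform structural input valid for all smooth projective $X$, such as a semiorthogonal decomposition whose pieces have individually controlled dimension together with subadditivity of generation time across the decomposition. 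Producing such an input is the central open obstacle, and I do not expect it to be overcome by the techniques of this paper, which are special to the surface case.
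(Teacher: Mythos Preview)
Your assessment is correct and matches the paper's treatment exactly: the statement is an open conjecture, not a theorem, and the paper does not attempt to prove it. Immediately after stating the conjecture the paper writes that the lower bound $\dim\D^b(\coh(X))\ge\dim X$ is due to Rouquier, while ``the equality is known only for some special varieties'' --- precisely the situation you describe. The paper's own contribution is Proposition~\ref{prop_timetwo}, which verifies the conjecture for the specific class of surfaces admitting a full cyclic strong exceptional collection of line bundles, using Theorem~\ref{theorem_BF} (Ballard--Favero) to compute the generation time of the tilting object directly; you correctly flag this as one of the accessible special cases rather than a general argument.
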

The bound $\D^b(\coh(X))\ge \dim X$ was proven by Rouquier in \cite{Rou}, while the equality is known only for some special varieties. A useful tool for proving the conjecture was found by Ballard and Favero in \cite{BF}:

\begin{theorem}[See {\cite[Theorem 3.4]{BF}}]
\label{theorem_BF}
Let $X$ be a smooth proper variety over a perfect field $\k$, let $T\in \D^b(\coh(X))$ be a classical generator such that $\Hom^i(T,T)=0$ for $i\ne 0$. Denote 
$$i_0=\max\{i\mid \Hom^i(T,T\otimes\O_X(-K_X))\ne 0\}.$$
Then the generation time of $T$ is $\dim X+i_0$.
\end{theorem}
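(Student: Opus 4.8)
The plan is to reduce to the endomorphism algebra $A=\End(T)$ and then read the answer off the Serre functor. Since $T$ is a classical generator with $\Hom^i(T,T)=0$ for $i\ne 0$, it is a tilting object, and $\mathbf{R}\Hom(T,-)$ is an exact equivalence $\D^b(\coh X)\xrightarrow{\sim}\D^b(\mathrm{mod}\,A)$ carrying $T$ to the free module $A$. Generation time is invariant under exact equivalences, so it suffices to compute the generation time of the free module $A$ inside $\D^b(\mathrm{mod}\,A)$. Because $X$ is smooth and proper, $\D^b(\coh X)$ is the category of perfect complexes, so the equivalence forces $\D^b(\mathrm{mod}\,A)=\mathrm{Perf}(A)$, i.e.\ $A$ has finite global dimension; put $d=\mathrm{gl.dim}\,A$.

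Next I would determine $d$ in two ways. First, the free module generates $\D^b(\mathrm{mod}\,A)$ in exactly $d$ steps. The lower bound is the ghost lemma (cf.\ \cite{Rou}): choosing simple modules $S,S'$ with $\Ext^d_A(S,S')\ne 0$, a nonzero extension class factors as a composite of $d$ morphisms through the successive syzygies $\Omega^iS$, and each such morphism is $A$-ghost because it shifts cohomological degree; hence $S\notin\langle A\rangle_{d-1}$. The upper bound comes from resolving an arbitrary object by a bounded complex of projectives and peeling it off one term at a time by stupid truncations, using that $\mathrm{gl.dim}\,A=d$ bounds the connecting maps so that the cone count stabilises at $d$. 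Second, $A$ being finite-dimensional of finite global dimension, $\D^b(\mathrm{mod}\,A)$ has a Serre functor — the Nakayama functor $\nu=-\otimes^{\mathbf{L}}_A A^{\vee}$ with $\nu^{-1}=\mathbf{R}\Hom_A(A^{\vee},-)$, where $A^{\vee}:=\Hom_{\k}(A,\k)$ — and a short computation with $\nu^{-1}(A)=\mathbf{R}\Hom_A(A^{\vee},A)$, whose cohomology sits in degrees $0,\dots,\mathrm{pd}_A(A^{\vee})=d$, gives $d=\max\{i:\Hom^i_{\D^b(\mathrm{mod}\,A)}(A,\nu^{-1}(A))\ne 0\}$.

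Now I transport back. Serre functors are unique up to isomorphism, so under the equivalence $\nu$ corresponds to $S_X=-\otimes\omega_X[\dim X]$ and the free module $A$ corresponds to $T$; hence $d=\max\{i:\Hom^i_X(T,S_X^{-1}(T))\ne 0\}$. Since $S_X^{-1}(T)=T\otimes\omega_X^{-1}[-\dim X]=T\otimes\O_X(-K_X)[-\dim X]$, we get $\Hom^i_X(T,S_X^{-1}(T))=\Hom^{\,i-\dim X}_X(T,T\otimes\O_X(-K_X))$, so $d=\dim X+\max\{j:\Hom^j(T,T\otimes\O_X(-K_X))\ne 0\}=\dim X+i_0$. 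Combined with the first sentence of the previous paragraph, the generation time of $T$ equals $\dim X+i_0$.

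The conceptual skeleton is short, but the step I expect to be the main obstacle is the upper bound "the generation time of the free module $A$ is at most $d$": a bounded complex of projectives can span arbitrarily many cohomological degrees, yet it must still be built from $A$ in at most $d$ cones, and the naive peeling estimate grows with the amplitude; one genuinely has to exploit the vanishing of the relevant $\Ext$-groups forced by $\mathrm{gl.dim}\,A=d$ (equivalently, the boundedness of $\nu^{\pm1}$ applied to a module). A second, more bookkeeping-level point is to reconcile the abstract identity $d=\max\{i:H^i(\nu^{-1}(A))\ne 0\}$ with the geometric twist $T\otimes\O_X(-K_X)$, which relies only on uniqueness of Serre functors together with the standard fact $\mathrm{pd}_A(A^{\vee})=\mathrm{gl.dim}\,A$ for a finite-dimensional algebra of finite global dimension.
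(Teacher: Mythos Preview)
The paper does not prove this statement; it is quoted verbatim from Ballard--Favero \cite{BF} and used as a black box in the proof of Proposition~\ref{prop_timetwo}. So there is no ``paper's own proof'' to compare against. Your outline is in fact the skeleton of the original argument in \cite{BF}: pass to $A=\End(T)$ by tilting, identify the generation time of the free module with $\mathrm{gl.dim}\,A$, and compute the latter via the Serre functor.

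Your self-diagnosis of the weak point is accurate. The upper bound ``generation time of $A$ is at most $d=\mathrm{gl.dim}\,A$'' is \emph{not} obtained by peeling off terms of a single projective resolution, because a bounded complex of projectives can have arbitrarily large amplitude. The standard fix (Rouquier, Krause--Kussin) is a Cartan--Eilenberg argument: given $C\in\D^b(\mathrm{mod}\,A)$, resolve each term $C^i$ by a length-$d$ projective resolution $P^\bullet_i$, assemble these into a double complex, and filter the total complex by \emph{rows}. Each row is a finite direct sum of shifted projectives, hence lies in $\langle A\rangle_0$, and there are exactly $d+1$ rows; this yields $C\in\langle A\rangle_d$ regardless of amplitude. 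You should replace your ``stupid truncation'' sketch with this.

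Two further remarks on details you glossed over. First, $\mathrm{pd}_A(A^\vee)=\mathrm{gl.dim}\,A$ is correct but not entirely formal: the inequality $\le$ is trivial, while $\ge$ uses that for the simple $S$ with $\mathrm{pd}(S)=d$ the surjection $\Ext^d(I(S),-)\twoheadrightarrow\Ext^d(S,-)$ (coming from $0\to S\to I(S)\to I(S)/S\to 0$ and $\Ext^{d+1}=0$) forces $\mathrm{pd}(I(S))=d$, and $I(S)$ is a summand of $A^\vee$. Second, to conclude $\max\{i:\Ext^i_A(A^\vee,A)\ne 0\}=\mathrm{pd}_A(A^\vee)$ you need the (easy, but worth stating) fact that over a finite-dimensional algebra $\mathrm{pd}(M)=\max\{i:\Ext^i(M,A)\ne 0\}$, proved by pushing the top $\Ext$ of $M$ against a simple forward along $P(S)\twoheadrightarrow S$. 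With these two points and the double-complex upper bound in place, your argument is complete and matches the one in \cite{BF}.
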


\begin{corollary}
Let $X$ be a smooth proper variety over a perfect field $\k$, let $(\EE_1, \ldots,\EE_n)$ be a full strong exceptional collection on $X$. Suppose
the generation time of the generator $T=\oplus_i \EE_i$ is equal to $\dim X$. Then the collection $(\EE_1, \ldots,\EE_n)$ is cyclic strong exceptional.
\end{corollary}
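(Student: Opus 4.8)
The plan is to apply Theorem~\ref{theorem_BF} to the object $T=\bigoplus_i\EE_i$. First I would check its hypotheses. Fullness of the collection means that $T$ is a classical generator of $\D^b(\coh X)$. Strong exceptionality means that $\Hom^j(\EE_k,\EE_l)=0$ for all $j\ne 0$ and all $k,l$: for $k\le l$ this is the strong exceptional condition, and for $k>l$ even $\Hom^j(\EE_k,\EE_l)=0$ for every $j$ by exceptionality. Hence $\Hom^j(T,T)=\bigoplus_{k,l}\Hom^j(\EE_k,\EE_l)=0$ for $j\ne0$, so Theorem~\ref{theorem_BF} applies and the generation time of $T$ equals $\dim X+i_0$, where $i_0=\max\{\,j\mid\Hom^j(T,T\otimes\O_X(-K_X))\ne 0\,\}$. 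Since by assumption the generation time is $\dim X$, we get $i_0=0$; in particular
$$\Hom^j(\EE_k,\EE_l\otimes\O_X(-K_X))=0\quad\text{for all }j>0\text{ and all }k,l.$$
(Here $i_0\ge 0$ automatically, since the generation time of any classical generator is at least $\dim\D^b(\coh X)\ge\dim X$ by Rouquier's theorem, so the hypothesis is equivalent to $i_0\le 0$.)

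It then remains to deduce that the collection is cyclic strong exceptional. Form the helix $\EE_{i+n}:=\EE_i\otimes\O_X(-K_X)$; by definition the collection is cyclic strong exceptional iff every window $(\EE_{m+1},\ldots,\EE_{m+n})$ is strong exceptional. Each window is automatically exceptional (this follows from exceptionality of $(\EE_1,\ldots,\EE_n)$ together with Serre duality, exactly as for the original helix), so what must be checked is the vanishing of the degree-$\ne 0$ $\Hom$-groups internal to a window. Unwinding the indices, and using that such $\Hom$-groups are $n$-periodic (twisting both arguments by $\O_X(-K_X)$ shifts indices by $n$), the only ones not already occurring in $(\EE_1,\ldots,\EE_n)$ are $\Hom^j(\EE_p,\EE_q\otimes\O_X(-K_X))$ with $j\ne0$ and $1\le q<p\le n$. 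For $j>0$ these vanish by the displayed formula above; for $j<0$ they vanish because $\EE_p$ and $\EE_q\otimes\O_X(-K_X)$ are (line bundles, hence) coherent sheaves, and $\Hom^{<0}$ between coherent sheaves is zero. Thus every window is strong exceptional, i.e.\ the collection is cyclic strong exceptional.

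The argument is a fairly direct consequence of Theorem~\ref{theorem_BF}; the one place that requires genuine care is the index bookkeeping in the last paragraph — pinning down exactly which $\Hom$-groups the strongness of the shifted collections imposes, and recognizing the new ones as components of $\Hom^{>0}(T,T\otimes\O_X(-K_X))$, which Theorem~\ref{theorem_BF} has just shown to vanish.
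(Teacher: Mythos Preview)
Your overall approach is correct and is exactly what the paper intends --- the corollary is stated there without proof as an immediate consequence of Theorem~\ref{theorem_BF}, and your unpacking of $i_0=0$ into the vanishing of $\Hom^{>0}(\EE_k,\EE_l\otimes\O_X(-K_X))$ and then into strongness of every window is the right argument.

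There is, however, one genuine gap. In the last step you write that $\EE_p$ and $\EE_q\otimes\O_X(-K_X)$ ``are (line bundles, hence) coherent sheaves'', and conclude $\Hom^{<0}=0$. The hypothesis of the corollary does \emph{not} assume the $\EE_i$ are line bundles or even sheaves --- they are arbitrary objects of $\D^b(\coh X)$. So as stated, your $j<0$ argument is unjustified: for a complex $\EE_p$ there is no reason for $\Hom^{j}(\EE_p,\EE_q\otimes\O_X(-K_X))$ to vanish when $j<0$. In the context of this paper (which only ever applies the notion ``cyclic strong exceptional'' to collections of line bundles) your assumption is harmless, and indeed the corollary is presumably meant in that spirit; but if you want a proof valid for the corollary as literally stated, you must either add the hypothesis that the $\EE_i$ are sheaves, or supply a separate argument for the negative-degree vanishing that does not use it.
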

 
We are able to prove the converse statement for collections of line bundles on surfaces. 

\begin{predl}
\label{prop_timetwo}
Let $X$ be a smooth projective surface with a full cyclic strong exceptional collection 
$$(\O_X(D_1),\ldots,\O_X(D_n))$$ 
of line bundles. Then the generator $T=\oplus_{i=1}^n \O_X(D_i)$ of the category $\D^b(\coh (X))$ has generation time two. Therefore, dimension of $\D^b(\coh(X))$ is equal to $2$.
\end{predl}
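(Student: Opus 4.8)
The plan is to apply the Ballard--Favero criterion, Theorem~\ref{theorem_BF}. First I would record that $X$ is necessarily a weak del Pezzo surface: since $\O_X(D_1)$ is exceptional and $\Hom^\bullet(\O_X(D_1),\O_X(D_1))\cong H^\bullet(X,\O_X)$, the sheaf $\O_X$ is exceptional, and the full cyclic strong exceptional collection yields a cyclic strong exceptional toric system (it has maximal length), so Proposition~\ref{prop_cyclicwdp} applies and $X$ is a weak del Pezzo surface of degree $d=K_X^2\ge 3$. In particular $X$ is rational (so all its plurigenera vanish), $-K_X$ is nef and effective (Lemma~\ref{lemma_Kpositive}), and by Lemma~\ref{lemma_irred} we may fix an integral anticanonical curve $Z\in|-K_X|$, which has arithmetic genus one. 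The generator $T=\bigoplus_i\O_X(D_i)$ is a classical generator (the collection is full) with $\Hom^i(T,T)=0$ for $i\ne 0$ (strong exceptionality in both directions), so by Theorem~\ref{theorem_BF} the generation time of $T$ equals $2+i_0$, where $i_0=\max\{i\mid \Hom^i(T,T\otimes\O_X(-K_X))\ne 0\}$. Since $\Hom^0(\O_X(D_1),\O_X(D_1-K_X))=H^0(X,\O_X(-K_X))\ne 0$, and by Rouquier's lower bound \cite{Rou} the generation time of any generator is at least $\dim X=2$, we get $i_0\ge 0$; the whole proposition therefore reduces to showing $\Hom^i(T,T\otimes\O_X(-K_X))=0$ for $i=1,2$.

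Using $D_k-K_X=D_{k+n}$ and decomposing, $\Hom^i(T,T\otimes\O_X(-K_X))=\bigoplus_{j,k=1}^{n}H^i(X,\O_X(D_{k+n}-D_j))$, so it suffices to prove each summand vanishes for $i\ne 0$. I would compute $D_{k+n}-D_j=\sum_{m=j}^{k+n-1}A_m$ and split into two cases. If $k<j$, this equals the sum $A_{j\ldots k-1}$ over the proper cyclic segment $[j,\ldots,k-1]$, which is strong left-orthogonal by Proposition~\ref{prop_straightforward}(3) since the toric system is cyclic strong exceptional; hence $H^i(X,\O_X(D_{k+n}-D_j))=0$ for $i\ne 0$ immediately. (This is exactly the case where $\O_X(D_j)$ and $\O_X(D_{k+n})$ sit in a common strong exceptional window of the helix.) If $k\ge j$, then $D_{k+n}-D_j=C-K_X$ where $C=A_{j\ldots k-1}$ is an ordinary, possibly empty, partial sum: for $k>j$ the class $C$ is strong left-orthogonal, and it is then numerically left-orthogonal with $C^2\ge -2$ by Propositions~\ref{prop_Akl2} and~\ref{prop_cyclic1type}; for $k=j$ one has $C=0$ and $D_{k+n}-D_j=-K_X$.

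It remains to bound $H^i(X,\O_X(C-K_X))$ for $i=1,2$, which is the crux. For $i=2$, Serre duality gives $h^2(\O_X(C-K_X))=h^0(\O_X(2K_X-C))$; when $C\ne 0$ this vanishes because two successive multiplications by a nonzero section of the effective class $-K_X$ embed $H^0(\O_X(2K_X-C))\hookrightarrow H^0(\O_X(K_X-C))\hookrightarrow H^0(\O_X(-C))=0$, the last group being zero as $C$ is left-orthogonal; when $C=0$, $h^0(\O_X(2K_X))=0$ since $X$ is rational. For $i=1$, twisting $0\to\O_X(K_X)\to\O_X\to\O_Z\to 0$ by $\O_X(C-K_X)$ gives $0\to\O_X(C)\to\O_X(C-K_X)\to\O_Z(C-K_X)\to 0$, and because $h^1(\O_X(C))=0$ (true for $C\ne 0$ as $C$ is strong left-orthogonal, and for $C=0$ as $X$ is rational) the long exact sequence yields $H^1(X,\O_X(C-K_X))\cong H^1(Z,\O_Z(C-K_X))$. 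Finally $\deg_Z\O_Z(C-K_X)=(C-K_X)\cdot(-K_X)=C^2+2+d\ge d>0$ by Proposition~\ref{prop_D2}, and on the integral genus-one curve $Z$, whose dualizing sheaf has degree $0$, a line bundle of positive degree has vanishing $H^1$ by Serre duality on $Z$. Hence all summands $H^i(X,\O_X(D_{k+n}-D_j))$ vanish for $i\ne 0$, so $i_0=0$, the generation time of $T$ is $2$, and combined with Rouquier's lower bound this gives $\dim\D^b(\coh X)=2$.

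I expect the main obstacle to be precisely the case $k\ge j$: there $\O_X(D_j)$ and $\O_X(D_{k+n})$ are separated by more than one period of the helix, so no single strong exceptional window contains both, and one genuinely has to use the geometry of the anticanonical curve $Z$ to kill $H^1$ and $H^2$ of $C-K_X$. Everything else is formal once the identification $D_{k+n}-D_j=C-K_X$ and the estimate $C^2\ge -2$ are in hand.
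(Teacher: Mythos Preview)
Your proof is correct and follows essentially the same route as the paper: reduce via Theorem~\ref{theorem_BF} to showing $H^i(X,\O_X(D_k-D_j-K_X))=0$ for $i>0$, dispatch the case $k<j$ directly from cyclic strong exceptionality, and in the case $k\ge j$ use the restriction sequence to an irreducible anticanonical curve together with the degree estimate $(C-K_X)\cdot(-K_X)=\chi(C)+d\ge d>0$. The only cosmetic differences are that the paper invokes Bertini to take $Z$ smooth (you use Lemma~\ref{lemma_irred} and work with $Z$ integral of arithmetic genus one, which is equally sufficient), and the paper reads off both $H^1$ and $H^2$ vanishing from the same long exact sequence whereas you treat $H^2$ separately via $H^0(2K_X-C)\hookrightarrow H^0(-C)=0$.
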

\begin{proof}
By Proposition~\ref{prop_cyclicwdp}, $X$ is a weak del Pezzo surface and $d:=\deg(X)\ge 3$. By Theorem~\ref{theorem_BF}, we have to check that $H^i(X,\O_X(D-K_X))=0$ for any $i>0$ and any $D$ of the form $D_k-D_l$. There are three possible cases:
$$
D=\begin{cases}
D_k-D_l, k<l,\\
0,\\
D_k-D_l, k>l.
\end{cases}$$
For $D=D_k-D_l$ where $k<l$ we have 
$$H^i(X,\O_X(D_k-D_l-K_X))=\Hom^i(\O_X(D_l),\O_X(D_k-K_X))=0$$
for any $i>0$ because the collection
\begin{multline*}
(\O_X(D_l),\O_X(D_{l+1}),\ldots,\O_X(D_n),\\ \O_X(D_1-K_X),\O_X(D_{2}-K_X),\ldots,\O_X(D_k-K_X),\ldots, \O_X(D_{l-1}-K_X))
\end{multline*}
is strong exceptional by assumption.

For $D=D_k-D_l$ where $k>l$ we borrow the arguments from Lemma 3.9 of \cite{BF}. By~\cite[Theorem 8.3.2]{Do}, the linear system $|-K_X|$ has no base points. By Bertini's Theorem, the general divisor $Z\in|-K_X|$ is a smooth irreducible curve. By the adjunction formula, $Z$ is a curve of genus~$1$.
Consider the standard exact sequence
\begin{equation}
\label{eq_DD}
0\to \O_X(D)\to \O_X(D-K_X)\to \O_Z(D-K_X)\to 0.
\end{equation}
One has $H^i(X,\O_X(D))=0$ for $i>0$ by strong exceptionality of the collection 
$(\O_X(D_1),\ldots,\O_X(D_n))$. Also, 
$$Z\cdot (D-K_X)=K_X(K_X-D)=d-D\cdot K_X=d+\chi(D)\ge d\ge 3$$
where the third equality is by Proposition~\ref{prop_D2} and the first inequality is because $D$ is a strong left-orthogonal divisor and hence $\chi(D)\ge 0$.
It follows that 
$$H^i(X,\O_Z(D-K_X))=H^i(Z,\LL)$$
where $\LL$ is a line bundle on $Z$ of degree $\ge 3$. Since $Z$ has genus $1$, we get $H^i(Z,\LL)=0$ for $i>0$.
The long exact sequence of cohomology associated to (\ref{eq_DD}) provides that $H^i(X,\O_X(D-K_X))=0$ for $i>0$.

The above arguments work also for $D=0$. Alternative, one can use Kawamata-Viehweg vanishing: $H^i(X,\O_X(-K_X))\cong H^{2-i}(X,\O_X(2K_X))^*=0$ for $i>0$ because $-2K_X$ is nef and big, see \cite[Theorem 2.64]{KM}.
\end{proof}

\begin{corollary}
Dimension of $\D^b(\coh(X))$ is equal to $2$ for any weak del Pezzo surface~$X$ from Table~\ref{table_yes}.
\end{corollary}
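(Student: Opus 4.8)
The statement is a direct consequence of Propositions~\ref{prop_classification} and~\ref{prop_timetwo}, so the plan is simply to chain these together. First I would invoke Proposition~\ref{prop_classification}: every weak del Pezzo surface $X$ appearing in Table~\ref{table_yes} carries a cyclic strong exceptional toric system $A=(A_1,\ldots,A_n)$. By Definition~\ref{def_exts} this means the associated collection of line bundles $(\O_X(D_1),\ldots,\O_X(D_n))$ from \eqref{eq_DDD} is cyclic strong exceptional.

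Next I would upgrade this collection to a \emph{full} one. Any toric system has maximal length by Definition~\ref{def_ts}, hence so does the collection $(\O_X(D_1),\ldots,\O_X(D_n))$; and $X$ is rational (it is a weak del Pezzo surface, or one of $\P^2,\mathbb F_0,\mathbb F_1,\mathbb F_2$). Therefore the corollary preceding Corollary~\ref{cor_cyclicblowup} — that a cyclic strong exceptional collection of line bundles of maximal length on a rational surface is full, which itself follows from Corollary~\ref{cor_cyclicaugm} and Proposition~\ref{prop_full} — shows that $(\O_X(D_1),\ldots,\O_X(D_n))$ is a full cyclic strong exceptional collection of line bundles on $X$.

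Finally I would apply Proposition~\ref{prop_timetwo} verbatim to this collection: the generator $\bigoplus_{i=1}^n\O_X(D_i)$ of $\D^b(\coh(X))$ has generation time $2$, so $\dim\D^b(\coh(X))\le 2$. Combined with Rouquier's lower bound $\dim\D^b(\coh(X))\ge\dim X=2$, we get $\dim\D^b(\coh(X))=2$. There is no real obstacle here; the only point worth a sanity check is that the four minimal cases $\P^2$, $\mathbb F_0$, $\mathbb F_1$, $\mathbb F_2$ in Table~\ref{table_yes} are also covered, which they are, since Proposition~\ref{prop_timetwo} is stated for an arbitrary smooth projective surface equipped with a full cyclic strong exceptional collection of line bundles.
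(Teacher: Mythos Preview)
Your proposal is correct and follows exactly the route the paper has in mind: the corollary is stated without proof in the paper because it is immediate from Proposition~\ref{prop_classification} (existence of a cyclic strong exceptional toric system), the fullness corollary following Corollary~\ref{cor_cyclicaugm}, and Proposition~\ref{prop_timetwo}. The only redundancy is that you separately invoke Rouquier's lower bound, whereas Proposition~\ref{prop_timetwo} already concludes that $\dim\D^b(\coh(X))=2$ (not merely $\le 2$); but this is harmless.
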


\section{A toric system which is not an augmentation}
\label{section_ce}

Here we give an example of a strong exceptional toric system $A$ on a weak del Pezzo surface~$X$ of degree $2$ such that $A$ is not an augmentation in the weak sense. 

By \cite[Section 3.6]{De}, a weak del Pezzo surface $X$ of degree $2$ of type $A_1+2A_3$ has the following configuration of $(-1)$-curves (denoted $\circ$) and $(-2)$-curves (denoted $\bul$). 
\begin{equation}
\label{eq_x1}
\xymatrix{& \bul \ar@{-}[r]\ar@{-}[ld] & \circ \ar@{-}[r] & \bul \ar@{-}[rd] &\\
\bul \ar@{-}[r]\ar@{-}[rd] & \circ \ar@{-}[r] & \bul\ar@{-}[r] & \circ \ar@{-}[r] & \bul\\
& \bul \ar@{-}[r] & \circ \ar@{-}[r] & \bul\ar@{-}[ru] & }
\end{equation}
Two vertices here are connected  by a line if and only if the corresponding curves intersect (then such intersection is unique and transversal).
There are four $(-1)$-curves on $X$, and they do not intersect. Let $p\colon X\to X'$ be the blow-down of these curves. One can check that $p$ maps $(-2)$-curves on $X$ to the following configuration of curves on $X'$ (where $*$ denotes a $0$-curve), and there are no other negative curves on $X'$:
\begin{equation}
\label{eq_xprime1}
\xymatrix{& \circ \ar@{-}[rr]\ar@{-}[ld] &  & \circ \ar@{-}[rd] &\\
\circ \ar@{-}[rr]\ar@{-}[rd] &  & {*}\ar@{-}[rr] &  & \circ\\
& \circ \ar@{-}[rr] &  & \circ\ar@{-}[ru] & }
\end{equation}
It means that $X'$ is a blow-up of $\P^2$ at three generic points, and we may assume that the curves on \eqref{eq_xprime1} belong to the following classes (here we use notation \eqref{eq_Lij} from Section~\ref{subsection_rclasses}):
\begin{equation*}
\xymatrix{& L_{12} \ar@{-}[rr]\ar@{-}[ld] &  & E_2 \ar@{-}[rd] &\\
E_1 \ar@{-}[rr]\ar@{-}[rd] &  & L_1\ar@{-}[rr] &  & L_{23}\\
& L_{13} \ar@{-}[rr] &  & E_3\ar@{-}[ru] & }
\end{equation*}
Now $p$ is the blow-up of the four points 
$$P_4=E_2\cap L_{12}, P_5=E_3\cap L_{13}, P_6=E_1\cap C, P_7=L_{23}\cap C$$
on $X'$, where $C$ is some $0$-curve representing the class $L_1$. The curves on \eqref{eq_x1} belong to the following classes:
\begin{equation*}
\xymatrix{& L_{124} \ar@{-}[r]\ar@{-}[ld] & E_4 \ar@{-}[r] & E_2-E_4 \ar@{-}[rd] &\\
E_1-E_6 \ar@{-}[r]\ar@{-}[rd] & E_6 \ar@{-}[r] & L_{167} \ar@{-}[r] & E_7 \ar@{-}[r] & L_{237}\\
& L_{135}  \ar@{-}[r] & E_5 \ar@{-}[r] & E_3-E_5 \ar@{-}[ru] & }
\end{equation*}
Thus, in the notation of Definition~\ref{def_effirr} we have 
\begin{equation}
\label{eq_Iirr}
I^{\rm irr}(X)=\{E_4,E_5,E_6,E_7\},
\end{equation}
$$R^{\rm irr}(X)=\{ L_{167};\quad L_{124}, E_1-E_6, L_{135};\quad E_2-E_4, L_{237}, E_3-E_5\}$$
and using Proposition~\ref{prop_roots} we get that 
\begin{multline}
\label{eq_Reff}
R^{\rm eff}(X)=\{ L_{167};\quad L_{124}, E_1-E_6, L_{135}, L_{246},L_{356},2L-E_{123456};\\
 E_2-E_4, L_{237}, E_3-E_5, L_{347},L_{257},L_{457}\}.
\end{multline}

Consider the sequence
$$A=(L_{14}, L_{567}, -L_{467}, 2L-E_{123467}, E_2, L_{245},  -L_{345}, 2L-E_{13456}, E_6-E_7, -2L+E_{11457}).$$
One can check that $A$ is a toric system and 
$$A^2=(-1,-2,-2,-2,-1,-2,-2,-1,-2,-3).$$
Denote $C_i=3L-E_{1234567}-E_i$ and $Q_{ij}=2L-E_{1234567}+E_{ij}$, these are $(-1)$-classes. 
Then by Lemma~\ref{lemma_IFdef}
\begin{multline*}
I(X,A)=\\
=\{A_{1\ldots j}\mid 1\le j\le 4\}\cup \{A_{i\ldots j}\mid 2\le i\le 5, 5\le j\le 7\}\cup
\{A_{i\ldots j}\mid 6\le i\le 8, 8\le j\le 9\}=\\
=\{L_{14}, Q_{23}, L_{15}, C_1\}\cup \{E_2, Q_{25},L_{13},Q_{24}, L_{45}, C_4, Q_{67}, C_5, E_3,Q_{35}, L_{12}, Q_{34}\}\cup \\
\cup \{Q_{27},L_{16}, Q_{37}, Q_{26}, L_{17}, Q_{36}\}.
\end{multline*}
It follows (see \eqref{eq_Iirr}) that there are no $(-1)$-curves in $I(X,A)$  and therefore $A$ is not an augmentation in the weak sense, see Definition~\ref{def_augm2} and the definition of $I(X,A)$ in Section~\ref{section_firstkind}. A forteriori, $A$ is neither a standard augmentation, nor exceptional or strong exceptional augmentation.

On the other hand, $A$ is strong exceptional. To check this, we use the next 
\begin{lemma}
\label{lemma_910}
Let $X$ be a weak del Pezzo surface of degree $2$. Let $A$ be a toric system on~$X$ with 
$$A^2=(-1,-2,-2,-2,-1,-2,-2,-1,-2,-3).$$
Then $A$ is strong exceptional if and only if all $(-2)$-classes $A_{k\ldots l}$ with $1\le k\le l\le 9$ are neither effective nor anti-effective and the $(-3)$-classes $A_{9,10}$ and $A_{10}$ are not anti-effective.
\end{lemma}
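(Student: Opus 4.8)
The plan is to use Proposition~\ref{prop_straightforward}(2): the toric system $A$ is strong exceptional if and only if $A_{k\ldots l}$ is strong left-orthogonal for every $1\le k\le l\le 9$. Since $(A_i^2+2)_{i=1}^{10}=(1,0,0,0,1,0,0,1,0,-1)$, Proposition~\ref{prop_Akl2} gives, for all $1\le k\le l\le 9$,
\[
A_{k\ldots l}^2=-2+\#\bigl(\{1,5,8\}\cap[k,\ldots,l]\bigr),
\]
so each such $A_{k\ldots l}$ is an $r$-class with $r\in\{-2,-1,0,1\}$. I would then run through these four values of $r$ by means of Proposition~\ref{prop_loslo} with $d=2$.

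The values $r=-1$ and $r=-2$ together with the $1$-classes already settle the ``only if'' direction and reduce the ``if'' direction to one remaining case. Indeed, an $r$-class with $r=-1$ is automatically strong left-orthogonal. An $r$-class with $r=-2$ of the form $A_{k\ldots l}$ is strong left-orthogonal if and only if it is neither effective nor anti-effective (Proposition~\ref{prop_loslo}, row $r=-2$), which is the first condition of the lemma. The only $1$-classes among the $A_{k\ldots l}$ with $1\le k\le l\le 9$ are $A_{1\ldots 8}$ and $A_{1\ldots 9}$; since $A_1+\dots+A_{10}=-K_X$ we have $K_X+A_{1\ldots 8}=-A_{9,10}$ and $K_X+A_{1\ldots 9}=-A_{10}$, and by Proposition~\ref{prop_loslo} (row $r\ge d-2$) $A_{1\ldots 8}$, respectively $A_{1\ldots 9}$, is strong left-orthogonal if and only if $h^0(-A_{9,10})=0$, respectively $h^0(-A_{10})=0$, that is, if and only if $A_{9,10}$, respectively $A_{10}$, is not anti-effective -- the second condition of the lemma. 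Hence ``only if'' holds, and to prove ``if'' it suffices to show that, assuming both conditions, every $0$-class $A_{k\ldots l}$ with $1\le k\le l\le 9$ is strong left-orthogonal.

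For a $0$-class $D=A_{k\ldots l}$, Proposition~\ref{prop_loslo} (row $r\ge d-2$) says $D$ is strong left-orthogonal if and only if $h^0(K_X+D)=0$, and $K_X+D=-A_{\mathrm{compl}}$, where $A_{\mathrm{compl}}:=-K_X-D$ is the complementary cyclic segment; by Lemma~\ref{lemma_dprime} it is a $(-2)$-class, and it contains the index $10$. I would argue by contradiction: assume $-A_{\mathrm{compl}}$ is effective. The $0$-classes $A_{k\ldots l}$ with $1\le k\le l\le 9$ are $A_{1\ldots l}$ for $l\in\{5,6,7\}$ and $A_{k\ldots l}$ for $k\in\{2,3,4,5\}$, $l\in\{8,9\}$. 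In the first family $A_{\mathrm{compl}}=A_{l+1\ldots 10}=A_{l+1\ldots 8}+A_{9,10}$; by the displayed formula $A_{l+1\ldots 8}$ is a $(-1)$-class (its index range meets $\{1,5,8\}$ only in $8$) and hence effective by Lemma~\ref{lemma_sqrt}(2), so $-A_{9,10}=(-A_{\mathrm{compl}})+A_{l+1\ldots 8}$ is effective, contradicting the second condition. In the second family $A_{\mathrm{compl}}=A_{10}+A_{1\ldots k-1}$ when $l=9$ and $A_{\mathrm{compl}}=A_{9,10}+A_{1\ldots k-1}$ when $l=8$, and $A_{1\ldots k-1}$ is a $(-1)$-class (its range meets $\{1,5,8\}$ only in $1$), hence effective, so $-A_{10}$, respectively $-A_{9,10}$, is effective, again contradicting the second condition. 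This would give strong left-orthogonality of all $0$-classes $A_{k\ldots l}$ and finish the argument.

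The step I expect to be the main obstacle is the last one: one must list the $0$-classes $A_{k\ldots l}$ and check, in each case, that the complementary cyclic segment splits off exactly an $A_{10}$ or an $A_{9,10}$, leaving a genuine $(-1)$-class to which Lemma~\ref{lemma_sqrt}(2) can be applied. This is a finite, elementary verification, but it is exactly where the particular positions of the entries $-1$ in $A^2$ (only at indices $1$, $5$, $8$) are used; everything else reduces to routine applications of Propositions~\ref{prop_straightforward}, \ref{prop_Akl2} and~\ref{prop_loslo}.
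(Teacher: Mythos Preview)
Your argument is correct and, at the level of ideas, coincides with the paper's: the paper obtains Lemma~\ref{lemma_910} as a one-line specialization of Theorem~\ref{theorem_checkonlyminustwo}, whose proof is precisely the case analysis via Propositions~\ref{prop_straightforward}, \ref{prop_Akl2}, \ref{prop_loslo} and Lemma~\ref{lemma_sqrt} that you carry out explicitly for this particular $A^2$. In other words, you have unfolded the proof of Theorem~\ref{theorem_checkonlyminustwo}(1)--(2) in the special case $d=2$, $A_n^2=-3$, $A_{n-1}^2=-2$; citing that theorem directly would give the result immediately.
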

\begin{proof}
Direct consequence of Theorem~\ref{theorem_checkonlyminustwo}.
\end{proof}

By the above lemma, we have to check that the $(-2)$-classes
\begin{multline*}
A_2=L_{567}, A_3=-L_{467}, A_4=2L-E_{123467}, A_{23}=E_4-E_5, A_{34}=L_{123}, A_{234}=2L-E_{123567}, \\
A_6=L_{245}, A_7=-L_{345}, A_{67}=E_3-E_2, A_9=E_6-E_7
\end{multline*}
are neither effective nor anti-effective and that the $(-3)$-classes 
$$A_{10}=-(2L-E_{11457}), A_{9,10}=-(2L-E_{11456})$$
are not anti-effective.
The first claim is straightforward by \eqref{eq_Reff}, for the second the argument is based on the following trivial fact.
\begin{lemma}
\label{lemma_bite}
Suppose  $D$ is an effective divisor on a surface and $C$ is an irreducible reduced curve. If $D\cdot C<0$ then $D-C$ is also an effective divisor.
\end{lemma}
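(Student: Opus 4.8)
The plan is to show that the hypothesis $D\cdot C<0$ forces the curve $C$ to appear in $D$ with positive multiplicity; once this is established, subtracting $C$ clearly leaves an effective divisor. First I would write the effective divisor $D$ in terms of its irreducible components and isolate the coefficient of $C$: decompose $D=mC+D'$, where $m\ge 0$ is the multiplicity of $C$ in $D$ and $D'$ is an effective divisor having no component equal to $C$. Then $D\cdot C=mC^2+D'\cdot C$.

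The key elementary input is that two effective divisors with no common component have nonnegative intersection number, since that intersection number is the sum of the (nonnegative) local intersection multiplicities at the finitely many points where their supports meet. Applying this to $D'$ and $C$ gives $D'\cdot C\ge 0$. Hence if $m=0$ we would get $D\cdot C=D'\cdot C\ge 0$, contradicting the assumption $D\cdot C<0$; therefore $m\ge 1$.

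Finally, $D-C=(m-1)C+D'$ is a sum of an effective divisor $D'$ and the effective (possibly zero) divisor $(m-1)C$, hence is effective. There is no real obstacle here: the only thing to be slightly careful about is that $C^2$ may be negative, so one cannot argue by positivity of $D\cdot C$ termwise; the argument must instead go through the sign of $m$, using the nonnegativity of $D'\cdot C$ rather than of $C^2$.
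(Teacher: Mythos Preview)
Your argument is correct and is exactly the standard one: the paper in fact states this lemma as a ``trivial fact'' and gives no proof at all, so your write-up supplies precisely the natural justification.
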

Suppose now that $D=-A_{10}=2L-E_{11457}\ge 0$. Since $D\cdot L_{167}=-1$ and $L_{167}\in R^{\rm irr}(X)$ it follows from Lemma~\ref{lemma_bite} that 
$D-L_{167}=L_{145}+E_6\ge 0$. Since $(L_{145}+E_6)\cdot E_6=-1$ and $E_6\in I^{\rm irr}(X)$, it follows from Lemma~\ref{lemma_bite} that $(L_{145}+E_6)-E_6=L_{145}\ge 0$, which contradicts to \eqref{eq_Reff}.
Similarly (with $6$ and $7$ interchanged) one checks that $-A_{9,10}=2L-E_{11456}$ is not effective.
Therefore, $A$ is a strong exceptional toric system by Lemma~\ref{lemma_910}.

\begin{remark} 
Note that the surface $X$ was obtained from $\P^2$ in two steps of blow-ups. It agrees with Conjecture~\ref{conj_twosteps}.
\end{remark}

\appendix
\section{Classification of weak del Pezzo surfaces}
\label{section_app}

In this Appendix we present classification of weak del Pezzo surfaces of degrees $7$ to $3$. Our references here were \cite{CT} and \cite{De}. Recall that two weak del Pezzo surfaces $X$ and $X'$ are \emph{of the same type} if there exists an isometry $\Pic(X)\to \Pic(X')$ preserving canonical class and the sets of negative curves. The set of $(-2)$-curves on $X$ is a set of simple roots of some root subsystem in $R(X)$, see Proposition~\ref{prop_roots}. It turns out (see \cite[Remark 2.1.4]{De}) that for surfaces $X$ of degree $\ge 3$ the type is uniquely determined by $d=\deg(X)$, the incidence graph $\Gamma$ of $(-2)$-curves and the number $m$ of $(-1)$-curves. We denote such type by $(d,\Gamma,m)$ or just by $(\Gamma,m)$ when the degree is clear from the context.  We omit $m$ if $d$ and $\Gamma$ determine the type uniquely.
Any weak del Pezzo surface (except for Hirzebruch surfaces $\mathbb F_0$ and $\mathbb F_2$) is a blow-up of $\P^2$ (maybe at infinitesimal points), we denote the standard basis in $\Pic(X)$ by $L,E_1,\ldots,E_n$. In the following tables, for any type of degrees $7$ to $3$, we present the set $R^{\rm irr}(X)\subset R(X)$ of $(-2)$-classes in $\Pic(X)$ corresponding to irreducible $(-2)$-curves, where $X$ is a certain weak del Pezzo surface of that type. 

By \cite[a table in 25.5.2]{Ma}, $R(X)=\{\pm(E_1-E_2)\}$ for $X$ of degree $7$, $R(X)=\{E_i-E_j, \pm L_{ijk}\}$ for $X$ of degree $6,5,4$, and 
$R(X)=\{E_i-E_j, \pm L_{ijk}, \pm Z\}$ for $X$ of degree $3$,  where $L_{ijk}=L-E_i-E_j-E_k$ and 
$Z:=2L-E_1-E_2-E_3-E_4-E_5-E_6$.

\begin{table}[h]
\begin{center}
\caption{Weak del Pezzo surfaces of degree $7$}
\label{table_d7}
\begin{tabular}{|p{3cm}|p{6cm}|}
 \hline
 type & $R^{\rm irr}$  \\
 \hline
 {$\emptyset$}& $\emptyset$   \\
 \hline
 {$A_1$}& $E_1-E_2$   \\
 \hline
\end{tabular}
\end{center}
\end{table}

\begin{table}[h]
\begin{center}
\caption{Weak del Pezzo surfaces of degree $6$}
\label{table_d6}
\begin{tabular}{|p{3cm}|p{6cm}|}
 \hline
 type & $R^{\rm irr}$ \\
 \hline
 {$\emptyset$}& $\emptyset$  \\
 \hline
 {$A_1,4$}& $E_1-E_2$ \\
 \hline	
 {$A_1,3$ }& $L_{123}$  \\
 \hline
  {$2A_1$ } & $E_1-E_2,L_{123}$  \\
 \hline
   {$A_2$} & $E_1-E_2,E_2-E_3$  \\
 \hline
   {$A_1+A_2$} & $L_{123},E_1-E_2,E_2-E_3$   \\
 \hline
\end{tabular}
\end{center}
\end{table}

\begin{table}[h]
\begin{center}
\caption{Weak del Pezzo surfaces of degree $5$}
\label{table_d5}
\begin{tabular}{|p{3cm}|p{6cm}|}
 \hline
 type & $R^{\rm irr}$   \\
 \hline
 {$\emptyset$}& $\emptyset$  \\
\hline
 {$A_1$ }& $E_1-E_2$ \\
 \hline
  {$2A_1$} & $E_1-E_2,E_3-E_4$\\
 \hline
   {$A_2$} & $E_1-E_2,E_2-E_3$ \\
 \hline
   {$A_1+A_2$} & $L_{123},E_1-E_2,E_2-E_3$ \\
 \hline
   {$A_3$} & $E_1-E_2,E_2-E_3,E_3-E_4$ \\
 \hline
   {$A_4$} & $E_1-E_2,E_2-E_3,E_3-E_4,L_{123}$  \\
 \hline
\end{tabular}
\end{center}
\end{table}

\begin{table}[h]
\begin{center}
\caption{Weak del Pezzo surfaces of degree $4$}
\label{table_d4}
\begin{tabular}{|p{3cm}|p{7cm}|}
 \hline
 type & $R^{\rm irr}$  \\
 \hline
 {$\emptyset$}& $\emptyset$  \\
 \hline
 {$A_1$ }& $E_4-E_5$    \\
 \hline
 {$2A_1,9$ }& $E_2-E_3, E_4-E_5$ \\
  \hline
 {$2A_1,8$} & $L_{123},E_4-E_5$ \\
 \hline
 {$A_2$}& $E_3-E_4,E_4-E_5$ \\
 \hline
 {$3A_1$}& $L_{123},E_2-E_3,E_4-E_5$  \\
 \hline
 {$A_1+A_2$}& $E_1-E_2,E_3-E_4,E_4-E_5$   \\
 \hline
 {$A_3,5$}& $E_2-E_3,E_3-E_4,E_4-E_5$  \\
 \hline
 {$A_3,4$} & $L_{123},E_3-E_4,E_4-E_5$ \\
 \hline
 {$4A_1$ } & $E_1-E_2,E_4-E_5,L_{123},L_{345}$ \\
  \hline 
 {$2A_1+A_2$} & $E_4-E_5,L_{123},E_1-E_2,E_2-E_3$   \\
  \hline
 {$A_1+A_3$} & $E_1-E_2,L_{123},E_3-E_4,E_4-E_5$   \\
  \hline
 {$A_4$} & $E_1-E_2,E_2-E_3,E_3-E_4,E_4-E_5$   \\
  \hline
  {$2A_1+A_3$} & $E_1-E_2,L_{345},L_{123},E_3-E_4,E_4-E_5$  \\
  \hline
  {$D_4$} & $E_2-E_3,E_3-E_4,E_4-E_5,L_{123}$   \\
  \hline
 {$D_5$} & $E_1-E_2,E_2-E_3,E_3-E_4,E_4-E_5,L_{123}$\\
  \hline 
\end{tabular}
\end{center}
\end{table}

\begin{table}[h]
\begin{center}
\caption{Weak del Pezzo surfaces of degree $3$}
\label{table_d3}
\begin{tabular}{|p{3cm}|p{9cm}|}
 \hline
 type & $R^{\rm irr}$  \\
 \hline
 {$\emptyset$}& $\emptyset$   \\
 \hline
 {$A_1$ }& $Z$    \\
 \hline
 {$2A_1$ }& $E_1-E_2, E_3-E_4$  \\
 \hline
 {$A_2$}& $E_1-E_2,E_2-E_3$  \\
 \hline
 {$3A_1$}& $E_1-E_2,E_3-E_4,E_5-E_6$ \\
  \hline
 {$A_1{+}A_2$}& $E_4-E_5,E_1-E_2,E_2-E_3$  \\
  \hline
 {$A_3$}& $E_1-E_2,E_2-E_3,E_3-E_4$  \\
  \hline
 {$4A_1$} & $E_1-E_2,E_3-E_4,E_5-E_6,Z$  \\
  \hline 
 {$2A_1{+}A_2$} & $E_4-E_5,L_{123},E_1-E_2,E_2-E_3$  \\
  \hline
 {$A_1{+}A_3$} & $E_5-E_6,E_1-E_2,E_2-E_3,E_3-E_4$   \\
  \hline
 {$2A_2$} & $E_1-E_2,E_2-E_3,E_4-E_5,E_5-E_6$ \\
  \hline
 {$A_4$} & $E_1-E_2,E_2-E_3,E_3-E_4,E_4-E_5$ \\
 \hline
 {$D_4$} & $E_1-E_2,E_3-E_4,E_5-E_6,L_{135}$ \\
 \hline
 {$2A_1+A_3$} & $E_5-E_6,Z,E_1-E_2,E_2-E_3,E_3-E_4$  \\
 \hline
 {$A_1+2A_2$} & $L_{123},E_1-E_2,E_2-E_3,E_4-E_5,E_5-E_6$ \\
 \hline
 {$A_1+A_4$} & $Z,E_1-E_2,E_2-E_3,E_3-E_4,E_4-E_5$  \\
 \hline
 {$A_5$} & $E_1-E_2,E_2-E_3,E_3-E_4,E_4-E_5,E_5-E_6$ \\
 \hline
 {$D_5$} & $E_1-E_2,E_2-E_3,E_3-E_4,E_4-E_5,L_{126}$  \\
 \hline 
 {$3A_2$} & $E_1-E_2,E_2-E_3,E_4-E_5,E_5-E_6,L_{123},L_{456}$  \\
 \hline
 {$A_1+A_5$} & $Z,E_1-E_2,E_2-E_3,E_3-E_4,E_4-E_5,E_5-E_6$  \\
 \hline
 {$E_6$} & $L_{123},E_1-E_2,E_2-E_3,E_3-E_4,E_4-E_5,E_5-E_6$  \\
 \hline
\end{tabular}
\end{center}
\end{table}

\newpage


\begin{thebibliography}{9999999}

\bibitem[BF12]{BF} M.\,Ballard, D.\,Favero, 
``Hochschild dimensions of tilting objects'', {\it Int. Math. Res.
Not.}, (11):2607--2645, 2012.

\bibitem[BBS13]{BBS} C.\,Bohning, H.-C.\,Graf von Bothmer, P.\,Sosna, ``On the derived category of the classical Godeaux
surface'', {\it Adv. Math.}, 243:203--231, 2013.

\bibitem[BBKS15]{BBKS} C.\,Bohning, H.-C.\,Graf von Bothmer, L.\,Katzarkov, P.\,Sosna, ``Determinantal Barlow surfaces and
phantom categories'', {\it Journal of the EMS}, 17(7):1569--1592, 2015.

\bibitem[Br05]{Br} N.\,Bridgeland, ``t-structures on some local Calabi–Yau varieties'', {\it J. Algebra},  289 (2):453-–483, 2005.

\bibitem[BS17]{BS} M.\,Brown, I.\,Shipman, ``The McKay correspondence, tilting, and
rationality'',  {\it Michigan Math. J.}, 2017.

\bibitem[Ch16]{Ch} D.\,Chan, ``2-hereditary algebras and almost Fano weighted surfaces'', arXiv:1604.06141, 2016.

\bibitem[CT88]{CT} D.\,F.\,Coray, M.\,A.\,Tsfasman, ``Arithmetic on singular Del Pezzo surfaces'', {\it Proc.
London Math. Soc.}, 57(3):25--87, 1988.

\bibitem[De14]{De} U.\,Derenthal, ``Singular del Pezzo surfaces whose universal torsors are hypersurfaces'', {\it Proc.
London Math. Soc.}, 108(3):638--681, 2014.


\bibitem[Do12]{Do} I.\,V.\,Dolgachev, \textit{Classical algebraic geometry: a modern view}, Cambridge University Press, 2012.



\bibitem[EL16]{EL} A.\,Elagin, V.\,Lunts, ``On full exceptional collections of line bundles on del Pezzo surfaces'', {\it Moscow Math. J.}, 
16(4):691{--}709, 2016.

\bibitem[EXZ17]{EXZ} A.\,Elagin, Y.\,Xu, S.\,Zhang, ``On exceptional collections of line bundles on weak del Pezzo surfaces'', arXiv:1710.03972, 2017.


\bibitem[Ha77]{Ha} R.\,Hartshorne, {\it Algebraic geometry}, Graduate Texts in Mathematics, Springer, New-York, 1977.


\bibitem[HP06]{HP2} L.\,Hille, M.\,Perling, ``A Counterexample to Kings Conjecture'', {\it Compos. Math.}, 142(6):1507--1521, 2006.



\bibitem[HP11]{HP} L.\,Hille, M.\,Perling, ``Exceptional sequences of invertible sheaves on rational
surfaces'', {\it Compos.\,Math.}, 147(4):1230{--}1280, 2011.

\bibitem[Ho13]{Ho} A.\,Hochenegger, ``Exceptional sequences of line bundles and spherical twists: a toric
example'', {\it Beitr. Algebra Geom.}, 54, 311--322, 2013. 

\bibitem[HI13]{HI} A.\,Hochenegger, N.\,Ilten, ``Exceptional Sequences on Rational C*-Surfaces'', {\it Manuscripta Mathematica}, 142:1--34, 2013.

\bibitem[Ki97]{Ki}
A.\,King, ``Tilting bundles on some rational surfaces'', unpublished manuscript, see
http://www.maths.bath.ac.uk/{$\sim$}masadk/papers/, 1997.

\bibitem[KM98]{KM} J.\,Kollar, S.\,Mori, \textit{Birational Geometry of Algebraic Varieties}, Cambridge Tracts in Mathematics 134, Cambridge University  Press, Cambridge, 1998.

\bibitem[KO94]{KO}
S.\,A.\,Kuleshov, D.\,O.\,Orlov, ``Exceptional sheaves on del Pezzo surfaces'', {\it Izv. RAN. Ser. Mat.}, 58(3):53--87, 1994 (in russian); {\it Russian Acad. Sci. Izv. Math.}, 44(3):479--513, 1995. 

\bibitem[Ku97]{Ku} S.\,Kuleshov, ``Exceptional and rigid sheaves on surfaces with anticanonical class without base components'',  {\it J. Math. Sci.}  86(5):2951--3003, 1997.




\bibitem[Ma74]{Ma}
Yu.\,I.\,Manin, \textit{Cubic Forms}, North-Holland Math. Lib., 1974.

\bibitem[Or93]{Or}
D.\,O.\,Orlov, ``Projective bundles, monoidal transformations and derived categories of coherent
sheaves'', {\it Russian Akad. Sci. Izv. Math.}, 41:133--141, 1993.

\bibitem[Ro08]{Rou}
R.\,Rouqier, ``Dimensions of triangulated categories'', {\it Journal of K-theory}, 1(2):193-256, 2008.


\bibitem[VdB04]{VdB}
M.\,Van den Bergh, ``Non-commutative crepant resolutions'', in: {\it The Legacy of Niels Henrik Abel}, Springer Berlin
Heidelberg, 749–-770, 2004.
]

\bibitem[Vi17]{Vi} C.\,Vial,
``Exceptional collections, and the Neron--Severi lattice for surfaces'', {\it Adv. Math.}, 305:895--934, 2017.

\bibitem[Zh18]{Zh} S.\,Zhang, ``Applications of toric systems on surfaces'', {\it J. Pure Appl. Algebra},
https://doi.org/10.1016/j.jpaa.2018.05.018, 2018



\end{thebibliography}
\end{document}